\newtheorem{theorem}{Theorem}[section]
\newtheorem{lemma}[theorem]{Lemma}
\theoremstyle{definition}
\newtheorem{definition}[theorem]{Definition}
\newtheorem{prop}[theorem]{Proposition}
\newtheorem{cor}[theorem]{Corollary}
\newtheorem{ass}{Assumption}
\newtheorem{algo}[theorem]{Algorithm}
\theoremstyle{remark}
\newtheorem{remark}[theorem]{Remark}
\numberwithin{equation}{section}
\newcommand{\E}{\mathcal{E}}
\newcommand{\divi}{\mathrm{div}}
\newcommand{\dist}{\mathrm{dist}}
\begin{document}

\title[On Gradient Flows with Obstacles and Euler's Elastica]{On Gradient Flows with Obstacles and \\ Euler's Elastica}

\author{Marius Müller}
\address{Institut für Analysis, Universität Ulm, 89069 Ulm}
\email{marius.mueller@uni-ulm.de}
\thanks{The author is supported by the LGFG Grant (Grant no. 1705 LGFG-E) and would like to thank Anna Dall'Acqua and Fabian Rupp for helpful discussions.}
%

\subjclass[2010]{Primary 35K87, 35R35; Secondary 49J40, 34G20}



\keywords{Gradient Flows in Metric Spaces, Higher Order PDE's, Elastic Bending Energy}

\begin{abstract}
We examine a steepest energy descent flow with obstacle constraint in higher order energy frameworks where the maximum principle is not available. We construct the flow under general assumptions using De Giorgi's minimizing movement scheme. Our main application will be the elastic flow of graph curves with Navier boundary conditions for which we study long-time existence and asymptotic behavior. 
\end{abstract}
\maketitle
\section{Introduction}
Under Obstacle Problems one understands the question whether a given energy functional attains a minimum in the set $C_\psi := \{ u \in V : u(x) \geq \psi(x) \, a.e. \}$, where $V$ is a space of Lebesgue-measurable functions and $\psi$ is a certain measurable function such that $C_\psi \neq \emptyset$, the so-called \emph{obstacle}. The question can be posed more generally as minimization of an energy functional in some convex closed subset of a Banach space $V$, but since $C_\psi$ has more structural properties than just convexity and closedness, one could also impose more conditions on $C$. Part of the goal of this article is to identify reasonable structural requirements in a more abstract framework, e.g. if $V$ is not required to be a space of functions. 

Such obstacle problems have various applications in physics and finance. A very important concept in this field is that of a variational inequality. This can be stated as follows: If $V$ is a Banach Space, $C \subset V$ is convex and closed and $\E \in C^1(V, \mathbb{R})$ is an energy functional then  
\begin{equation*}
u \in C \textrm{ is such that } \E(u) = \inf_{w \in C} \E(w)  \quad \Rightarrow \quad 0 \leq D\E(u)(v-u) \quad \forall v \in C , 
\end{equation*}
where $D\E: V \rightarrow V'$ denotes the Frechét derivative of $\E$. 

In this article we are especially interested in the case when $V$ is a Hilbert Space. The energy we mainly focus on is the elastic bending energy for graphs with fixed ends, i.e. 
\begin{equation}\label{eq:elasenerg}
\E: W^{2,2}(0,1) \cap W_0^{1,2}(0,1) \rightarrow \mathbb{R} , \quad \E(u) := \int_0^1 \frac{u''(x)^2}{(1+ u'(x)^2)^\frac{5}{2}} \; \mathrm{d}x .
\end{equation}
This energy is motivated as follows: If $\gamma \in C^\infty((a,b); \mathbb{R}^2)$ is an immersed plane curve, then one can define its one-dimensional Willmore energy to be 
\begin{equation}\label{eq:willmore}
\mathcal{W}(\gamma) := \int_\gamma \kappa^2(s) d\mathbf{s}, 
\end{equation} 
where $\kappa$ denotes the curvature of $\gamma$ and $d\mathbf{s}$ denotes the arclength element. If $\gamma : (0,1) \rightarrow \mathbb{R}^2$ is a \emph{graph curve}, i.e. $\gamma(x) = (x, u(x))$ for $x \in (0,1)$, then $\E(u) =\mathcal{W}(\gamma)$.  
The obstacle problem for elastic curves in the framework presented above has already been studied in \cite{Anna} and \cite{Mueller}. Elastic Curves with other obstacle-type confinements have also recently raised a lot of interest, see \cite{Novaga}. 

In this article we want to construct gradient flows respecting the obstacle condition. In this framework, a gradient flow should be understood as a flow that realizes the steepest possible energy descent in the class of admissible curves. The so-called steepest descent curve approach is also the starting point for \cite{Ambrosio} to define gradient flows in metric spaces $(X,d)$. Since each convex closed subset of a Banach space forms a metric space, one could think that the gradient flow we are interested in is already constructed in \cite{Ambrosio}. However, in \cite{Ambrosio} the authors impose a condition on the energy that the elastic energy does not satisfy. We want to comment on this condition shortly.  For a metric space $(X,d)$ one defines the metric slope of $\E: X \rightarrow \overline{\mathbb{R}}$ to be 
\begin{equation*}
|\partial \E| (u) := \limsup_{v \rightarrow u } \frac{(\E(u) - \E(v))^+}{d(u,v)}.
\end{equation*} 
If $X = C \subset H$ is a convex subset of a Hilbert space $H$ and $\E : H \rightarrow \mathbb{R}$,  $\E \in C^{1,1}_{loc}(\{ \E< \infty \} , \mathbb{R})$ , then one has (see Proposition \ref{prop:quantity} below) 
\begin{equation*}
| \partial \E | (u) = \sup_{ v \neq u, \E(v) < \infty  } \frac{(-D\E(u)(v-u))^+}{||v-u||}.
\end{equation*} 
In order to show existence of the flow, \cite[Equation (2.3.1)]{Ambrosio} requires that $|\partial \E|$ is weakly lower semicontinuous on $\{\E < \infty \} $ which is in general not true for nonlinear evolutions such as the evolution by elastic flow. If this condition is not satisfied, then one obtains the gradient flow only with a relaxed version of $|\partial \E|$, which is denoted by $|\partial^- \E |$ in \cite{Ambrosio}. Nevertheless, we can construct a steepest-descent flow  without relaxation for the elastic energy and even in a larger class of higher order energies. Existence and asymptotic behavior of this flow are proved in  Theorem \ref{thm:exisres}, Theorem \ref{thm:navier} and Theorem \ref{thm:subcon}, which are our main results.

Important progress on the field of higher order gradient flows with obstacle constraint has been made in \cite{Okabe1} and \cite{Okabe2}, where $H = L^2(\Omega)$ for some open bounded $\Omega \subset \mathbb{R}^n$ and 
\begin{equation*}
\E(u) := \begin{cases}
\int_\Omega  ( \Delta u)^2 \; \mathrm{d}x & u \in W_0^{2,2}(\Omega) \\ 
\infty  & u \not \in W_0^{2,2}(\Omega)
\end{cases} .
\end{equation*} 
Even though $|\partial^- \E| = |\partial \E|$ in this case, the article provides plenty of new insights in regularity of the obstacle gradient flow, which can hardly be discussed in the general framework in \cite{Ambrosio}. The reason why the energy is extended by infinity is, that one obtains an $L^2$-gradient flow in the end, which is actually more desireable than a gradient flow in $W^{2,2}$, since the PDE for the flow is usually easier. Unfortunately, we are unable to construct an $L^2$-gradient flow for the elastic energy respecting the obstacle condition at this point. In \cite{Okabe1}, the existence problem for this flow is presented as an open question and one of the main motivations of the authors to consider parabolic fourth order obstacle problems. The present article can be understood as some progress on this question.

The main method to show existence in \cite{Ambrosio}, \cite{Okabe1} and \cite{Okabe2} is the De Giorgi Minimizing Movement Scheme, which we will define later. For fundamental literature on the scheme see \cite[Chapter 3]{Ambrosio}. It serves as a discrete approximation of the flow and has been used by several authors to construct gradient flows.  Remarkable is an important application to optimal transport, the so-called JKO-scheme, see \cite{JKO}. Moreover it has already been applied to the elastic flow without obstacles, see \cite{Pozzi}.

The article is organized as follows: In Chapter 2 we clarify what we understand by an \emph{Obstacle Gradient Flow} in a Hilbert Space, show first properties of the defined flow and explain why the definition is consistent with the notion of a steepest energy descent flow in a metric space. Furthermore, we will discuss how the structure of an obstacle problem can be understood in a Hilbert Space which is not necessarily a space of functions. In Chapter 3 we examine a condition that is sufficient for the convergence of the De Giorgi Minimizing Movement Scheme to an Obstacle Gradient Flow that exists for all positive times. In Chapter 4 we find a class of higher order energies for which the aforementioned condition is satisfied and examine space and time regularity of the flow. The Elastic Energy defined in \eqref{eq:elasenerg} is a member of the considered class.  Chapter 5 deals with the elastic flow only and uses identities for higher derivatives from Chapter 4 to study long time behavior and asymptotics of the Obstacle Gradient Flow. The main result in Chapter 5 identifies two possible asymptotic behaviors, both of which may occur as we show in the end of the article. Fortunately, one of the two possible behaviors is convergence to a 'critical point', i.e. a solution of the variational inequality. 
\section{The Notion of Obstacle Gradient Flows}

In the following, $H$ denotes a real Hilbert Space with scalar product $(\cdot, \cdot)$ and norm $||\cdot||$. For a Frechét differentiable map $\E:H \rightarrow H$ and $u \in H$, $\nabla_H \E$ or just $\nabla \E$ denotes the $H-$Gradient of $\E$, i.e. the unique element $v \in H$ such that $D\E(u)(\phi) = (v, \phi)$ for all $\phi \in H$.  

\subsection{Definition and First Properties}

\begin{ass}[Assumptions on the Energy]\label{ass:main}
 
 We assume that  $\mathcal{E} \in C^{1,1}_{loc} (H ,\mathbb{R})$  is weakly lower semicontinuous and bounded from below by some $\alpha \in \mathbb{R}$.
 Furthermore, we assume that 
 \begin{equation*}
  \sup_{ w \in B_R(0) } || \nabla_H \E(w) || < \infty \quad \forall R > 0 . 
 \end{equation*}
 \end{ass}
 
\begin{ass} [Gradient Growth Condition]\label{ass:growth}

Another condition we will sometimes impose is that there is $\zeta: \mathbb{R}_+ \rightarrow \mathbb{R}_+$ nondecreasing and continuous such that for each $u,v \in H$ 
\begin{equation*}
||\nabla_H \mathcal{E}(u) - \nabla_H \mathcal{E}(v) || \leq \zeta(||u||+ ||v||) ||u-v|| .
\end{equation*}
\end{ass}

\begin{definition}[Obstacle Gradient Flow]\label{def:coneflow}

Let $H$ be a real Hilbert space and $C \subset H$ be a closed convex set. Suppose that $\E: H \rightarrow \mathbb{R}$ satisfies Assumption \ref{ass:main}. A curve $(u(t))_{t\geq 0 }$ in $H$ is called \emph{Obstacle Gradient Flow} with initial datum $u_0\in C$ if it satisfies
\begin{enumerate}
\item (Initial Datum) $u(0) = u_0$.
\item (Regularity) $u \in W^{1,2}((0,T),H)$ for each $T> 0$, $u(t) \in C$ for every $t\geq 0$.
\item (Flow Variational Inequality)  It holds for almost every $t > 0$ that
\begin{equation*}
(FVI)  \qquad (\dot{u}(t) , v- u(t)) + (\nabla_H \E(u(t)), v - u(t) ) \geq 0  \quad \forall v \in C.
\end{equation*}
\end{enumerate}
\end{definition}
 \begin{remark}
In the following, we need some more assumptions on $u_0$ to prove existence of the flow. These will be discussed in Section \ref{sec:existence}. 
\end{remark}
\begin{remark}
Requiring that $u(t) \in C$ for every $t\geq 0$ is only meaningful if we take the $C([0,\infty),H)$-representative of $u$, which we will always do unless otherwise specified. 
\end{remark}
\begin{remark}
From this point we will refer to the Flow Variational Inequality with $(FVI)$ as an abbreviation. Because of the $(FVI)$ requirement, our definition of the Obstacle Gradient Flow is slightly stronger than the definition of a weak solution in \cite[Definition 1.1]{Okabe1}.
\end{remark}
%

\begin{prop}[Energy Dissipation] \label{prop:enrggdisp}
Assume Assumption \ref{ass:main}. Let $(u(t))_{t \geq 0 }$ be an Obstacle Gradient Flow. Then $\E \circ u $ is noncreasing and lies in $W^{1,1}_{loc}(0,\infty)$. Furthermore 
$\dot{u} \in L^2(0,\infty)$ and  
\begin{equation}\label{eq:abl}
- ||\dot{u}(t) ||^2  = \frac{d}{dt} \mathcal{E}(u(t)) \quad a.e.   
\end{equation} 
\end{prop}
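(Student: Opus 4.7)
The plan is to first establish the chain rule for $\E\circ u$ and then to identify the resulting derivative with $-\|\dot u(t)\|^2$ via a two-sided test-function argument in the $(FVI)$. Since $\E\in C^{1,1}_{loc}(H,\mathbb{R})$ and $u\in W^{1,2}((0,T),H)$ is absolutely continuous (in its $C([0,T],H)$-representative), the standard Hilbert-space chain rule yields that $\E\circ u$ is absolutely continuous on $[0,T]$ with
\[
\frac{d}{dt}\E(u(t)) \;=\; (\nabla_H \E(u(t)),\dot u(t)) \qquad \text{for a.e.\ } t \in (0,T).
\]
Because $u$ stays in a bounded subset of $H$ on $(0,T)$, Assumption~\ref{ass:main} provides a uniform bound on $\|\nabla_H\E(u(t))\|$ there; combined with $\dot u \in L^2((0,T),H)$, this puts the above derivative in $L^1(0,T)$, and hence $\E\circ u\in W^{1,1}_{loc}(0,\infty)$.

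Next, at any $t>0$ where both $(FVI)$ holds and $t$ is a Lebesgue point of $\dot u$ (a set of full measure), I would test $(FVI)$ once with $v = u(t+h) \in C$ and once with $v = u(t-h) \in C$ for small $h>0$. Dividing by $h$ and $-h$ respectively (and flipping the inequality in the second case) gives
\[
\Bigl(\dot u(t) + \nabla_H\E(u(t)),\, \tfrac{u(t+h)-u(t)}{h}\Bigr) \geq 0, \qquad \Bigl(\dot u(t) + \nabla_H\E(u(t)),\, \tfrac{u(t-h)-u(t)}{-h}\Bigr) \leq 0.
\]
Both difference quotients converge to $\dot u(t)$ in $H$ at Lebesgue points of $\dot u$, so sending $h\to 0^+$ forces equality in the limit:
\[
(\dot u(t) + \nabla_H\E(u(t)),\,\dot u(t)) = 0,
\]
i.e.\ $(\nabla_H\E(u(t)),\dot u(t)) = -\|\dot u(t)\|^2$. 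Together with the chain rule this is exactly~\eqref{eq:abl}, and monotonicity of $\E\circ u$ is then immediate from the fundamental theorem of calculus.

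For the $L^2$-integrability of $\dot u$, I would integrate~\eqref{eq:abl} over $(0,T)$ and use the lower bound $\E\geq\alpha$ from Assumption~\ref{ass:main} to obtain
\[
\int_0^T \|\dot u(t)\|^2\, dt \;=\; \E(u(0)) - \E(u(T)) \;\leq\; \E(u(0)) - \alpha,
\]
uniformly in $T$, whence $\dot u\in L^2((0,\infty),H)$.

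The step I expect to require the most care is the Hilbert-space chain rule for $\E\in C^{1,1}_{loc}$ composed with a Bochner-Sobolev curve — in particular, the pointwise identification of $\frac{d}{dt}\E(u(t))$ with $(\nabla_H\E(u(t)),\dot u(t))$ on a set of full measure that also contains the Lebesgue points of $\dot u$, so that the limiting FVI inequalities can actually be combined with the chain rule. Once this is secured, the test-function trick is standard for variational inequalities, and the remaining claims follow directly from energy balance.
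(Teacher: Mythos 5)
Your proposal is correct and follows essentially the same route as the paper: a chain rule for $\E\circ u$ along the absolutely continuous curve (the paper proves it via a.e.\ Fréchet differentiability of $u$ and an integral mean value representation, you invoke it as the standard Hilbert-space chain rule), the two-sided test of $(FVI)$ with $v=u(t\pm h)$ to force $(\nabla_H\E(u(t)),\dot u(t))=-\|\dot u(t)\|^2$, absolute continuity of $\E\circ u$ to get monotonicity via the fundamental theorem of calculus, and integration of \eqref{eq:abl} with the lower bound $\alpha$ to conclude $\dot u\in L^2(0,\infty)$. The only cosmetic difference is your use of Lebesgue points of $\dot u$ in place of the paper's citation of a.e.\ differentiability, which is equivalent here.
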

\begin{proof}
Recall from \cite[Theorem 1.4.35]{Cazenave} that $u(t)$ is almost everywhere Frechét differentiable.
Fix $t,s > 0 $ be such that $u$ is Frechét differentiable at $t$ and $(FVI)$ holds at $t$. Note that $(0,1) \ni r \mapsto \nabla \E(r u(t) + (1-r)u(s)) \in H $ is Bochner measurable as a continuous function. It is also bounded by Assumption \ref{ass:main}. One computes using \cite[Proposition 1.1.6]{Arendt}
\begin{align*}
\frac{\E(u(t)) - \E(u(s))}{t-s} & =  \frac{1}{t-s}\int_0^1\left( \nabla \E(r u(t) + (1-r) u(s))  , u(t)- u(s) \right)\; \mathrm{d}r \\ & = \left( \int_0^1 \nabla\E(ru(t) + (1-r)u(s))\; \mathrm{d}r , \frac{u(t) - u(s)}{t-s} \right)  . 
\end{align*}
Using that  
\begin{equation*}
\int_0^1 \nabla \E ( r u(t) + (1-r) u(s) ) \; \mathrm{d}r \rightarrow  \nabla \E(u(t))   \quad (s \rightarrow t) 
\end{equation*}
in $H$ we find 
\begin{equation}\label{eq:decay}
\frac{d}{dt} \E(u(t)) =  (\nabla\E(u(t), \dot{u}(t) )  \quad a.e. \; \; . 
\end{equation}
Let $|h| < t$ be arbitrary. Use $(FVI)$ from Definition \ref{def:coneflow} and plug in $v = u(t+h)$ to find 
\begin{equation*}
(\dot{u}(t) , u(t+h) - u(t)) + ( \nabla\E(u(t) , u(t+h) - u(t) ) \geq 0 
\end{equation*}
Recall also that $t$ was chosen such that $u$ is Frechét differentiable at $t$. If  $h > 0 $, dividing by $h$ and letting $h \rightarrow 0 + $ yields 
\begin{equation*}
||\dot{u}(t) ||^2 + (\nabla\E(u(t)) , \dot{u}(t) ) \geq  0 .
\end{equation*}
The same procedure as $h \rightarrow 0 - $ implies 
\begin{equation*}
||\dot{u}(t) ||^2 + ( \nabla \E(u(t)) , \dot{u}(t) ) \leq  0 .
\end{equation*}
We obtain together with \eqref{eq:decay}
\begin{equation*}
\frac{d}{dt} \E(u(t)) = - ||\dot{u}(t) ||^2 \quad a.e. 
\end{equation*}
This however does not prove that $\E \circ u$ is nonincreasing, since the fundamental theorem of calculus holds true iff $\E \circ u$ is locally absolutely continuous. To show this, we apply \cite[Remark 1.1.3]{Ambrosio} and verify \cite[Definition 1.1.1]{Ambrosio}. Fix $T> 0$.  Since $u$ is continuous, the set $u( [0,T] ) \subset H$ is compact. Since $\nabla \E$ is locally Lipschitz, it is Lipschitz on $u([0,T])$. Let $L_T > 0 $ denote the Lipschitz constant. 
Now for $t_1 , t_2 \in [0,T]$
\begin{equation}
||\E(u(t_1)) - \E(u(t_2))|| \leq L_T ||u(t_1) - u(t_2) || \leq L_T \int_{t_1}^{t_2} || \dot{u} (s)|| \; \mathrm{d}s
\end{equation}
which proves that $\E \circ u \in AC^2(0,T)$ in the sense of \cite[Definition 1.1.1]{Ambrosio}. Hence the monotonicity. We can use the fundamental theorem of calculus to obtain
\begin{equation}
\int_0^T ||\dot{u}(s) ||^2 \; \mathrm{d}s =  - \int_0^T \frac{d}{ds}\E(u(s)) \; \mathrm{d}s = \E(u_0) - \E(u(T)) \leq \E(u_0) - \alpha . 
\end{equation}
Letting $T \rightarrow \infty $ and using the monotone convergence theorem proves that $\dot{u}  \in L^2(0, \infty)$. The claim follows.
\end{proof}
\begin{cor}[Control of the Time Derivative]\label{cor:boundder}

Assume Assumption \ref{ass:main}. Let $(u(t))_{ t\geq 0 } $ be an Obstacle Gradient Flow. Then $||\dot{u}(t) || \leq || \nabla\E(u(t))|| $ for almost every $t> 0 $.
\end{cor}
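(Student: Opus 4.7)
The key observation is that the proof of Proposition \ref{prop:enrggdisp} already establishes more than its statement: combining \eqref{eq:decay} with the two-sided $h \to 0\pm$ argument yields the pointwise identity
\begin{equation*}
(\nabla \E(u(t)), \dot{u}(t)) = -\|\dot{u}(t)\|^2 \quad \text{a.e.}
\end{equation*}
My plan is to take this identity as the starting point, so there is no need to redo the $(FVI)$-based differentiation.

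From there, I would simply apply the Cauchy--Schwarz inequality to the left-hand side. This gives, at almost every $t > 0$,
\begin{equation*}
\|\dot{u}(t)\|^2 = -(\nabla \E(u(t)), \dot{u}(t)) \leq \|\nabla \E(u(t))\| \cdot \|\dot{u}(t)\|.
\end{equation*}
I would then split into two cases: if $\|\dot{u}(t)\| = 0$, the claim is trivial; otherwise, dividing by $\|\dot{u}(t)\|$ yields $\|\dot{u}(t)\| \leq \|\nabla \E(u(t))\|$, as desired.

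There is no substantive obstacle here — the proof is a one-line consequence of an identity that was already derived (but not isolated) inside the proof of Proposition \ref{prop:enrggdisp}. If one wanted to be fully self-contained, one could alternatively plug $v = u(t) - \varepsilon \nabla \E(u(t))$ into $(FVI)$ whenever this vector lies in $C$, but that requires extra hypotheses on $C$, so the Cauchy--Schwarz route via the energy identity is cleaner and works under Assumption \ref{ass:main} alone.
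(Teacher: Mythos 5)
Your proof is correct and is essentially the paper's own argument: the paper likewise takes the identity $\|\dot{u}(t)\|^2 = -(\nabla\E(u(t)),\dot{u}(t))$ from (the proof of) Proposition \ref{prop:enrggdisp} via \eqref{eq:decay} and \eqref{eq:abl}, and concludes by Cauchy--Schwarz. The case distinction on $\|\dot{u}(t)\|=0$ is a harmless elaboration; nothing further is needed.
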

\begin{proof}
Adopting the notation from the previous proposition and using \eqref{eq:decay} we obtain for a.e. $t \in (0,\infty)$ 
\begin{equation*}
|| \dot{u}(t) ||^2 = - ( \nabla \E(u(t)), \dot{u}(t) ) \leq || \nabla \E(u(t) ) || \;  || \dot{u}(t)|| . \qedhere
\end{equation*}
\end{proof}
\begin{prop}[Global Hölder Continuity and Growth]\label{prop:hoel}
Let $t_1,t_2 \geq 0 $ and assume Assumption \ref{ass:main}. Then 
\begin{equation*}
||u(t_1) - u(t_2) || \leq  \sqrt{|t_1-t_2|}\sqrt{\E(u_0)- \alpha}.
\end{equation*}
In particular, for each $t \geq 0 $ one has
\begin{equation}\label{eq:2.16}
||u(t) || \leq ||u_0|| + \sqrt{t}\sqrt{\E(u_0) - \alpha }.
\end{equation}
\end{prop}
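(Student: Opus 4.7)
The plan is to combine the Bochner-space fundamental theorem of calculus with Cauchy--Schwarz and then invoke the energy dissipation identity from Proposition \ref{prop:enrggdisp} to control the $L^2$-norm of $\dot{u}$ in terms of the initial energy. Without loss of generality I take $t_1 \leq t_2$. Since $u \in W^{1,2}((0,T),H)$ for each $T > 0$ and we work with its continuous representative, I can write
\begin{equation*}
u(t_2) - u(t_1) = \int_{t_1}^{t_2} \dot{u}(s) \; \mathrm{d}s
\end{equation*}
as a Bochner integral in $H$.

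Next I take norms and apply Cauchy--Schwarz in the scalar integral:
\begin{equation*}
\|u(t_2) - u(t_1)\| \leq \int_{t_1}^{t_2} \|\dot{u}(s)\| \; \mathrm{d}s \leq \sqrt{t_2 - t_1}\left(\int_{t_1}^{t_2} \|\dot{u}(s)\|^2 \; \mathrm{d}s\right)^{\!1/2}.
\end{equation*}
Now I use Proposition \ref{prop:enrggdisp}, which gives $\|\dot{u}(s)\|^2 = -\frac{d}{ds}\mathcal{E}(u(s))$ almost everywhere together with the absolute continuity of $\mathcal{E} \circ u$. Integrating and using monotonicity of $\mathcal{E}\circ u$ together with the lower bound $\mathcal{E} \geq \alpha$ yields
\begin{equation*}
\int_{t_1}^{t_2} \|\dot{u}(s)\|^2 \; \mathrm{d}s = \mathcal{E}(u(t_1)) - \mathcal{E}(u(t_2)) \leq \mathcal{E}(u_0) - \alpha,
\end{equation*}
which, combined with the previous display, gives the asserted Hölder estimate.

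For the growth bound \eqref{eq:2.16} I specialize to $t_1 = 0$, $t_2 = t$ so that $\|u(t) - u_0\| \leq \sqrt{t}\sqrt{\mathcal{E}(u_0)-\alpha}$, and conclude by the triangle inequality $\|u(t)\| \leq \|u_0\| + \|u(t) - u_0\|$. There is no real obstacle here: the only thing worth being careful about is that Proposition \ref{prop:enrggdisp} is stated for $t > 0$, but because $u$ is the continuous representative and $\dot{u} \in L^2(0,\infty)$, the Bochner FTC extends continuously up to $t_1 = 0$, so the endpoint case is automatic.
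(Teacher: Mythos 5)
Your proof is correct and follows essentially the same route as the paper: the Bochner fundamental theorem of calculus, Cauchy--Schwarz, and the dissipation identity \eqref{eq:abl} to bound $\int_{t_1}^{t_2}\|\dot{u}(s)\|^2\,\mathrm{d}s$ by $\E(u(t_1))-\E(u(t_2))\leq\E(u_0)-\alpha$, followed by the choice $t_1=0$ and the triangle inequality. Your extra remark about the endpoint $t_1=0$ is a harmless (and correct) refinement of the same argument.
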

\begin{proof}
From \eqref{eq:abl} follows immediately that for $ t_2 \geq t_1 \geq 0 $ 
\begin{align*}
||u(t_1) - u(t_2)|| & \leq \int_{t_1}^{t_2} ||\dot{u}(s)|| \; \mathrm{d}s  \leq \sqrt{|t_2 - t_1| } \sqrt{\int_{t_1}^{t_2} ||\dot{u}(s)||^2 \; \mathrm{d}s  } \\ & \leq \sqrt{|t_1 - t_2 |} \sqrt{\E(t_1) - \E(t_2)} 
\end{align*}
The rest of the claim follows easily choosing $t_1 = 0$ and applying the triangle inequality.
\end{proof}
\begin{prop}[Uniqueness and Continuous Dependence]\label{prop:Wellposed} 

Let $u_0 ,v_0 \in C$ be arbitrary and assume Assumptions \ref{ass:main} and  \ref{ass:growth}. Suppose $(u(t))_{t \geq 0 }$, $(v(t))_{t \geq 0} $ are Obstacle Gradient Flows with initial data $u_0$,$v_0$, respectively. Then 
\begin{equation*}
||u(t) - v(t) || \leq ||u_0-v_0|| e^{ t \zeta  ( ||u_0|| + ||v_0|| + 2 \sqrt{\max\{\mathcal{E}(u_0), \E(v_0)\} - \alpha} \sqrt{t})}.
\end{equation*}
In particular, if $u_0 = v_0 $ then $u \equiv v$. 
\end{prop}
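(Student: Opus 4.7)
The plan is the standard variational-inequality comparison argument: test one flow's $(FVI)$ against the other and add. Concretely, fix $t$ where both flows are Frechét differentiable and satisfy $(FVI)$. Plug $v = v(t) \in C$ into the $(FVI)$ for $u$ and $v = u(t) \in C$ into the $(FVI)$ for $v$ to obtain
\begin{align*}
(\dot{u}(t), v(t) - u(t)) + (\nabla\E(u(t)), v(t) - u(t)) &\geq 0, \\
(\dot{v}(t), u(t) - v(t)) + (\nabla\E(v(t)), u(t) - v(t)) &\geq 0.
\end{align*}
Adding these two inequalities and rearranging produces
\begin{equation*}
(\dot{u}(t) - \dot{v}(t), u(t) - v(t)) \leq -(\nabla\E(u(t)) - \nabla\E(v(t)), u(t) - v(t)).
\end{equation*}

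Next I would exploit that $u,v \in W^{1,2}((0,T),H)$ makes $t \mapsto \|u(t) - v(t)\|^2$ absolutely continuous on bounded intervals with derivative $2(\dot{u}(t) - \dot{v}(t), u(t)-v(t))$ almost everywhere. Combined with Assumption \ref{ass:growth} and Cauchy-Schwarz, the inequality above gives
\begin{equation*}
\frac{d}{dt} \|u(t) - v(t)\|^2 \leq 2 \zeta(\|u(t)\| + \|v(t)\|) \, \|u(t) - v(t)\|^2 \quad \text{a.e.}
\end{equation*}

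Now I would apply Proposition \ref{prop:hoel} to each of $u$ and $v$ to control the argument of $\zeta$: for every $s \geq 0$,
\begin{equation*}
\|u(s)\| + \|v(s)\| \leq \|u_0\| + \|v_0\| + 2 \sqrt{\max\{\E(u_0), \E(v_0)\} - \alpha}\,\sqrt{s} =: R(s).
\end{equation*}
Since $\zeta$ is nondecreasing, $\zeta(R(s)) \leq \zeta(R(t))$ for $s \in [0,t]$. Gronwall's inequality then yields
\begin{equation*}
\|u(t) - v(t)\|^2 \leq \|u_0 - v_0\|^2 \exp\!\left(2\int_0^t \zeta(R(s))\,ds\right) \leq \|u_0 - v_0\|^2 e^{2t\,\zeta(R(t))},
\end{equation*}
which is the claimed estimate after taking square roots; uniqueness is the special case $u_0 = v_0$.

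The only mildly subtle point is justifying the identity $\frac{d}{dt}\|u-v\|^2 = 2(\dot u - \dot v, u - v)$ a.e.; this is a standard consequence of $u - v \in W^{1,2}((0,T),H)$ (either via the vector-valued fundamental theorem of calculus as in \cite[Proposition 1.1.6]{Arendt}, exactly as used in Proposition \ref{prop:enrggdisp}, or by noting that $\|w\|^2$ is $C^{1,1}$ with derivative $2w$ and chaining). Everything else is algebra and Gronwall.
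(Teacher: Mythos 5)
Your proposal is correct and follows essentially the same route as the paper's proof: test each flow's $(FVI)$ with the other trajectory, add, use Assumption \ref{ass:growth} together with the growth bound \eqref{eq:2.16} and monotonicity of $\zeta$, then integrate and apply Gronwall. Your explicit justification of $\frac{d}{dt}\|u-v\|^2 = 2(\dot u - \dot v, u-v)$ a.e.\ via the $W^{1,2}$ regularity is a welcome extra care on a point the paper treats implicitly.
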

\begin{proof}
Let $t > 0 $ be such that $u$ and $v$ are differentiable at $t$ and $(FVI)$ holds. Using $(FVI)$ and \eqref{eq:2.16} we find
\begin{align*}
\frac{d}{dt}||u(t) - v(t)||^2  & = 2(u(t)- v(t) , \dot{u}(t) - \dot{v}(t) )
\\ & = -2 ( \dot{u}(t) , v(t) - u(t) ) - 2 ( \dot{v}(t) , u(t) - v(t) ) 
\\ & \leq 2( \nabla\E(u(t)) , v(t) - u(t) ) + 2(\nabla\E(v(t)), u(t) - v(t)) 
\\ & \leq 2( \nabla \E(v(t))- \nabla \E(u(t)) , u(t) - v(t) )  
\\ &   \leq 2 ||\nabla \E(u(t))- \nabla \E(v(t)) || \; ||u(t) - v(t)||
\\ & \leq 2 \zeta(||u(t)||+ ||v(t)||) ||u(t) - v(t) ||^2
\\ &  \leq 2 \zeta( ||u_0|| + ||v_0|| + 2  \sqrt{\max\{\E(u_0),\E(v_0)\}- \alpha} \sqrt{t} )  ||u(t) - v(t)||^2. 
\end{align*} 
Since this holds true for almost every $t> 0 $, we can integrate over $t$ and find 
\begin{align*}
& ||u(t) - v(t) ||^2  \leq ||u_0 - v_0||^2 \\ &  \qquad   + \int_0^t  2 \zeta ( ||u_0|| + ||v_0|| + 2  \sqrt{\max\{\E(u_0),\E(v_0)\}- \alpha} \sqrt{s} )  ||u(s) - v(s)||^2 \; \mathrm{d}s.
\end{align*}
The Gronwall Lemma implies the desired estimate. 
\end{proof}
\begin{prop}[Approximation of Flows]\label{prop:limitflow}

Assume Assumptions \ref{ass:main} and \ref{ass:growth}. Assume further that $(u_0^m)_{m \in \mathbb{N}} \subset C$ is a $H$-convergent sequence and $u_0 \in C$ is such that $u_0^m \rightarrow u_0$ in $H$ as $m \rightarrow \infty$. Assume that for each $m \in \mathbb{N}$, $(u_m(t))_{t \geq 0 }$ is an Obstacle Gradient Flow with initial value $u_0^m$. Then there exists an Obstacle Gradient Flow $(u(t))_{t \geq 0 }$ with initial value $u_0$ and $u_m \rightarrow  u$ in $C([0,T],H)$  for all $T> 0$.
\end{prop}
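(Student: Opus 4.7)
My plan is to exploit the continuous dependence estimate from Proposition \ref{prop:Wellposed} to show that $(u_m)$ is Cauchy in $C([0,T],H)$ for every fixed $T>0$, extract a limit $u$, and then verify that $u$ satisfies all three clauses of Definition \ref{def:coneflow}.

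First I would collect uniform bounds. Since $\E \in C^{1,1}_{\mathrm{loc}}$ is continuous and $u_0^m \to u_0$ in $H$, both $\sup_m \|u_0^m\|$ and $\sup_m \E(u_0^m)$ are finite. Proposition \ref{prop:hoel} then delivers a uniform bound on $\|u_m(t)\|$ for $t\in[0,T]$, and feeding everything into Proposition \ref{prop:Wellposed} yields a constant $K_T$ (depending only on these uniform bounds and $\zeta$) with
\begin{equation*}
\|u_m(t) - u_n(t)\| \leq K_T \|u_0^m - u_0^n\| \qquad \forall\, t \in [0,T],\ m,n \in \mathbb{N}.
\end{equation*}
Thus $(u_m)$ is Cauchy in $C([0,T],H)$, and by completeness there is $u \in C([0,\infty),H)$ with $u_m \to u$ uniformly on compact time intervals.

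Next I would verify each requirement of Definition \ref{def:coneflow}. The identity $u(0) = \lim_m u_0^m = u_0$ is immediate, and $u(t) \in C$ follows from closedness of $C$. For the Sobolev regularity, Proposition \ref{prop:enrggdisp} gives $\|\dot u_m\|_{L^2((0,T),H)}^2 \leq \E(u_0^m) - \alpha$, which is uniformly bounded, so along a subsequence $\dot u_m \rightharpoonup w$ weakly in $L^2((0,T),H)$. Testing against $\varphi \otimes h$ for $\varphi \in C_c^\infty(0,T)$ and $h \in H$ and using the uniform convergence $u_m \to u$ identifies $w$ with the distributional derivative $\dot u$; in particular $u \in W^{1,2}((0,T),H)$.

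The main obstacle is to pass to the limit in $(FVI)$. For a fixed $v \in C$ and nonnegative $\varphi \in C_c^\infty(0,T)$, integrating the $(FVI)$ for $u_m$ against $\varphi$ gives
\begin{equation*}
\int_0^T \varphi(t) \bigl[ (\dot u_m(t), v - u_m(t)) + (\nabla \E(u_m(t)), v - u_m(t)) \bigr] \, \mathrm{d}t \geq 0.
\end{equation*}
The first term passes to the limit by the weak--strong pairing $\dot u_m \rightharpoonup \dot u$ in $L^2((0,T),H)$ against $\varphi(v - u_m) \to \varphi(v - u)$ strongly. For the second term, Assumption \ref{ass:growth} together with the uniform boundedness of $\{u_m(t), u(t)\}_{m,t}$ upgrades the uniform convergence $u_m \to u$ to the uniform convergence $\nabla \E(u_m) \to \nabla \E(u)$ on $[0,T]$. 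Localising $\varphi$ around a Lebesgue point of $t \mapsto (\dot u(t), v-u(t)) + (\nabla \E(u(t)), v-u(t))$ then recovers the pointwise $(FVI)$ at that $t$ with this specific $v$. Finally, a countable dense exhaustion of $C$, combined with the affine continuity of the expression in $v$, produces a single null set outside which $(FVI)$ holds for every $v \in C$, completing the verification.
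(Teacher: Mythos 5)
Your argument reaches the same limit $u$ by the same first step as the paper (the continuous–dependence estimate of Proposition \ref{prop:Wellposed} makes $(u_m)$ Cauchy in $C([0,T],H)$), but from there it takes a genuinely different route: you obtain $u\in W^{1,2}((0,T),H)$ from the uniform bound $\|\dot u_m\|_{L^2((0,T),H)}^2\le \E(u_0^m)-\alpha$ and weak compactness, identifying the weak limit with $\dot u$, whereas the paper bounds difference quotients of $u$ directly and invokes the difference-quotient characterisation of $W^{1,2}$; and you pass to the limit in an \emph{integrated} version of $(FVI)$ via the weak--strong pairing $\dot u_m\rightharpoonup\dot u$ against $\varphi(v-u_m)\to\varphi(v-u)$, whereas the paper works pointwise at a differentiability point $t$, approximating $(\dot u(t),v-u(t))$ by $\tfrac1h\int_t^{t+h}(\dot u_m,v-u_m)$ and controlling the error by energy differences. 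Your distributional route avoids those delicate error estimates and the uniform convergence of $\nabla\E\circ u_m$ follows even more directly from Assumption \ref{ass:growth} than from local Lipschitz continuity, so the limit passage itself is cleaner.

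The one step that does not go through as written is the final quantifier exchange. Localising at Lebesgue points of the scalar function $t\mapsto(\dot u(t)+\nabla\E(u(t)),v-u(t))$ produces a null set depending on $v$, and your remedy --- a countable dense subset of $C$ --- presupposes separability of $H$, which the paper nowhere assumes (the proposition is stated for an arbitrary real Hilbert space; the paper's pointwise argument gets a $v$-independent null set for free, namely the non-differentiability points of $u$). The gap is repairable without separability: from $\int_0^T\varphi\,(\dot u+\nabla\E(u(\cdot)),v-u)\,\mathrm{d}t\ge 0$ for all nonnegative $\varphi$ you get $\int_I(\dot u+\nabla\E(u(\cdot)),v-u)\,\mathrm{d}t\ge 0$ for every interval $I$ and every $v$, with no null set involved; then take $t$ to be a Lebesgue point, in the Bochner sense, of the single $H$-valued map $F:=\dot u+\nabla\E(u(\cdot))\in L^1((0,T),H)$. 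At such $t$, $\tfrac1h\int_t^{t+h}(F(s),v-u(s))\,\mathrm{d}s\to(F(t),v-u(t))$ for \emph{every} $v\in C$ simultaneously, using continuity of $u$ to absorb $\tfrac1h\int_t^{t+h}(F(s),u(t)-u(s))\,\mathrm{d}s$, and the exceptional set (non-Lebesgue points of $F$) is independent of $v$. With that replacement your proof is complete and correct.
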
 
\begin{proof}
We show first that for each $T> 0$, $(u_m)_{m \in \mathbb{N}}$ defines a Cauchy sequence in $C([0,T],H)$. Indeed, Proposition \ref{prop:Wellposed} implies that 
\begin{align*}
\sup_ {0 \leq t \leq T} ||u_m(t) - u_{m'} (t)|| & \leq ||u_0^m - u_0^{m'} || e^{T \zeta(||u_0^m||+ ||u_0^{m'}|| + \sqrt{\max(\E(u_0^m) , \E(u_0^{m'}) -  \alpha }\sqrt{T}) }
\\ & \leq ||u_0^m - u_0^{m'} || e^{T \zeta(  \sup_{\mu \in\mathbb{N}} 2||u_0^\mu|| + \sqrt{\E(u_0^\mu)- \alpha} \sqrt{T} ) } 
\end{align*}
Therefore there exists $u \in C([0,\infty), H)$ such that for each $T>0$, $u$ is the limit of $(u_m)_{m \in \mathbb{N}}$ in $C([0,T],H)$. Since $C$ is closed, we obtain $u (t) \in C$ for each $t> 0$. Since uniform convergence implies pointwise convergence we conclude $u(0) = u_0$.  It remains to show that $u \in W^{1,2}_{loc}((0, \infty),H)$ and $(FVI)$ holds. For this we use \cite[Theorem 2.2]{Marcel}. Fix $T> 0 $ and $h > 0 $ . Using \eqref{eq:abl} we get
\begin{align*}
\int_0^T \frac{||u(t+h) -u(t) ||^2}{h^2} \; \mathrm{d}t   & = \lim_{m \rightarrow \infty} \int_0^T \frac{||u_m(t+h) - u_m(t)||^2 }{h^2} \; \mathrm{d}t \\
 & = \lim_{m \rightarrow \infty} \int_0^T \frac{1}{h^2} \left\Vert \int_0^1 \dot{u}_m(t+rh) h  \; \mathrm{d}r \right\Vert^2  \; \mathrm{d}t 
 \\ &  \leq \liminf_{m \rightarrow \infty} \int_0^T \int_0^1 || \dot{u}_m (t+rh) ||^2 \; \mathrm{d}r \; \mathrm{d}t
 \\ & \leq \liminf_{m \rightarrow \infty} \int_0^1 \int_0^{T+h} || \dot{u}_m(s)||^2 \; \mathrm{d}s \; \mathrm{d}r
\\ & = \liminf_{m \rightarrow \infty} \int_0^{T+h} - \frac{d}{ds} \E(u_m(s)) \; \mathrm{d}s 
\\ & = \liminf_{m \rightarrow \infty} \E(u_0^m)  - \E(u_m(T+h)) 
  \leq \sup_{e \in \mathbb{N}} \E(u_0^e) - \alpha  .
\end{align*}
Since the bound is independent of $h$ we obtain with \cite[Theorem 2.2]{Marcel} that $u \in W^{1,2}((0,T),H)$. This proves the regularity assertion. For the variational inequality let $t > 0$ be such that $u$ is differentiable at $t$. 
\begin{align*}
(\dot{u}(t) , v- u(t) ) & = \lim_{h \rightarrow 0+ } \frac{1}{h} (u(t+h)- u(t) , v - u(t) )
\\ & =  \lim_{h \rightarrow 0+ } \lim_{m \rightarrow \infty}  \frac{1}{h} (u_m(t+h)- u_m(t) , v - u_m(t) )
 \\ & = \lim_{h \rightarrow 0+ } \lim_{m \rightarrow \infty}  \frac{1}{h} \int_t^{t+h} (\dot{u}_m(s) , v -u_m(t) ) \; \mathrm{d}s
 \\ &  = \lim_{h \rightarrow 0+ }  \lim_{m \rightarrow \infty} \left( \frac{1}{h} \int_t^{t+h} (\dot{u}_m(s) , v -u_m(s) ) \; \mathrm{d}s \right. \\ & \quad \quad \qquad \qquad + \left. \frac{1}{h} \int_t^{t+h} (\dot{u}_m(s) , u_m(s) - u_m(t) )\; \mathrm{d}s \right) . 
\end{align*}  
Observe that since $h> 0 $
\begin{align*}
\left\vert \frac{1}{h} \int_t^{t+h} (\dot{u}_m(s) , u_m(s) - u_m(t) )\; \mathrm{d}s  \right\vert & \leq \frac{1}{h} \int_t^{t+h} ||\dot{u}_m(s) || \; ||u_m(s) - u_m(t) || \; \mathrm{d}s 
\\ & \leq \frac{1}{h} \int_t^{t+h} \int_t^s ||\dot{u}_m(s) ||\;  ||\dot{u}_m(r)|| \; \mathrm{d}r  \; \mathrm{d}s
\\ & \leq \frac{1}{h}  \left( \int_t^{t+h} ||\dot{u}_m(s)|| \; \mathrm{d}s \right)^2  \\ & \leq \int_t^{t+h} || \dot{u_m}(s) ||^2 \; \mathrm{d}s  = \E(u_m(t))-\E(u_m(t+h))  .  
\end{align*}
We obtain that 
\begin{align*}
\lim_{h \rightarrow 0 } \lim_{m \rightarrow \infty} \left\vert \frac{1}{h} \int_t^{t+h} (\dot{u}_m(s) , u_m(s) - u_m(t) )\; \mathrm{d}s  \right\vert & \leq  \liminf_{h \rightarrow 0 } \liminf_{m \rightarrow \infty} \E(u_m(t))-\E(u_m(t+h))\\ &  = \liminf_{h \rightarrow 0 } \E(u(t))-\E(u(t+h)) = 0    ,
\end{align*}
because of continuity of $\E$ and $u$. Therefore 
\begin{align}\label{eq:AQ}
(\dot{u}(t) ,v - u(t) ) &=  \lim_{h \rightarrow 0+ } \lim_{m \rightarrow \infty}  \frac{1}{h} \int_t^{t+h} (\dot{u}_m(s) , v -u_m(s) ) \; \mathrm{d}s  \nonumber \\ & \geq \liminf_{h \rightarrow 0 + } \liminf_{m \rightarrow \infty}  \frac{1}{h} \int_t^{t+h} (-\nabla\E(u_m(s)), v- u_m(s)) \; \mathrm{d}s .
\end{align}
Now recall that $u_m \rightarrow u $ uniformly on $[t,t+h]$ for each $h > 0 $ and $(u_m)_{m \in \mathbb{N}}$ is bounded in $C^\frac{1}{2}([t,t+h],H)$, see Proposition \ref{prop:hoel}. Since $\nabla \E$ is locally Lipschitz, $\nabla \E \circ u_m \rightarrow \nabla \E \circ u $  uniformly in $[t,t+h]$ provided that $h> 0 $ is appropriately small.
Passing to the limit  in \eqref{eq:AQ} we find 
\begin{equation*}
(\dot{u}(t) , v-u(t) ) \geq \liminf_{h \rightarrow 0 } \frac{1}{h} \int_t^{t+h} (- \nabla \E(u(s)), v- u(s) ) \; \mathrm{d}s  =- (\nabla \E(u(t)), v- u(t) ) 
\end{equation*} 
where we used the continuity of the integrand in the last step. We obtain $(FVI)$.
\end{proof}
\begin{cor}[Admissibility and Approximation]\label{cor:closure}
Under Assumptions \ref{ass:main} and \ref{ass:growth}, the set 
\begin{equation*}
\mathcal{G} := \{ u_0 \in C | \; \textrm{there exists an Obstacle Gradient Flow starting at $u_0$} \} 
\end{equation*}
is a closed subset of $C$.
\end{cor}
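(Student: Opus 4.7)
The plan is to argue by direct application of Proposition \ref{prop:limitflow}. Take an arbitrary sequence $(u_0^m)_{m \in \mathbb{N}} \subset \mathcal{G}$ that converges in $H$ to some $u_0 \in H$; we must show $u_0 \in \mathcal{G}$.

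First I would observe that $u_0 \in C$, since each $u_0^m \in \mathcal{G} \subset C$ and $C$ is closed by assumption. Next, because $u_0^m \in \mathcal{G}$, there exists for each $m \in \mathbb{N}$ an Obstacle Gradient Flow $(u_m(t))_{t \geq 0}$ starting at $u_0^m$. The hypotheses of Proposition \ref{prop:limitflow} (Assumptions \ref{ass:main} and \ref{ass:growth}, convergence of initial data in $H$, limit lying in $C$, existence of the approximating flows) are then all in place.

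Invoking Proposition \ref{prop:limitflow} directly produces an Obstacle Gradient Flow $(u(t))_{t \geq 0}$ with $u(0) = u_0$, and in particular $u_m \to u$ in $C([0,T],H)$ for every $T > 0$, though the convergence itself is not needed for the corollary. The mere existence of such a flow starting at $u_0$ means $u_0 \in \mathcal{G}$, which closes the argument.

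There is no real obstacle here: the corollary is a one-line consequence of the approximation proposition, which has already absorbed all the analytic work (passing to the limit in the (FVI), controlling $\dot u_m$ in $L^2$, etc.). The only thing to double-check is that the definition of $\mathcal{G}$ indeed matches the conclusion of Proposition \ref{prop:limitflow}, i.e.\ that the proposition produces a flow satisfying \emph{all three} clauses of Definition \ref{def:coneflow} with initial value $u_0$, which it does.
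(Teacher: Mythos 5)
Your argument is exactly the paper's: the corollary is stated there as an immediate consequence of Proposition \ref{prop:limitflow}, and your write-up just spells out the routine details (limit lies in $C$ by closedness, the approximating flows exist by definition of $\mathcal{G}$, the proposition yields a flow starting at the limit). Correct and same approach.
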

\begin{proof}
The proof is immediate by Proposition \ref{prop:limitflow}. 
\end{proof}

%
%
\subsection{The HPR-property and generalized Obstacle Problems}

In this article we focus on obstacle problems and not just on flows in some convex closed set. There is usually more structure in the admissible set for obstacle problems. In this section, we aim at understanding the obstacle structure geometrically. This will result in an estimate for the gradient along the flow and can be seen as an investigation of sharpness of the inequality in Corollary \ref{cor:boundder}. 

\begin{definition}[Half Plane Residuum Property]\label{def:halfi} 

Let $H$ be a Hilbert space. A closed, convex set $C$ is said to have the \emph{Half Plane Residuum Property} (for short: HPR-property)  if for each $u \in C$ and $v \in H$ one has $u + \pi_C(v) - v \in C$. Here $\pi_C$ denotes the nearest point projection on $C$ in $H$. 
\end{definition}

\begin{prop}[Justification of the Name]
 
A convex set $C \subset H$ has the HPR-property if and only if for each $w_1,w_2 \in H$
\begin{equation}\label{eq:halfp}
(w_1 - \pi_C(w_1) , w_2 - \pi_C(w_2) ) \geq 0.
\end{equation}
\end{prop}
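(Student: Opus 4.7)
The plan is to reduce everything to the standard variational characterization of the nearest point projection in a Hilbert space, namely that for any $w \in H$ and any $c \in C$,
\begin{equation*}
(w - \pi_C(w), c - \pi_C(w)) \leq 0.
\end{equation*}
Both directions follow by choosing the test element $c$ cleverly.

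For the forward direction, suppose $C$ has the HPR-property and fix $w_1,w_2 \in H$. I would apply the HPR-property with $u := \pi_C(w_1) \in C$ and $v := w_2$ to produce the admissible element
\begin{equation*}
c := \pi_C(w_1) + \pi_C(w_2) - w_2 \in C.
\end{equation*}
Plugging this $c$ into the variational characterization of $\pi_C(w_1)$ yields $(w_1 - \pi_C(w_1), \pi_C(w_2) - w_2) \leq 0$, which is exactly \eqref{eq:halfp} after switching sign.

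For the reverse direction, assume \eqref{eq:halfp} and fix $u \in C$, $v \in H$. Setting $z := u + \pi_C(v) - v$, the goal is to show $z \in C$, which I would rephrase as $\pi_C(z) = z$. Here the key identity is
\begin{equation*}
u - \pi_C(z) = (u - z) + (z - \pi_C(z)) = (v - \pi_C(v)) + (z - \pi_C(z)),
\end{equation*}
using $u - z = v - \pi_C(v)$ by definition of $z$. Testing the variational inequality for $\pi_C(z)$ against $u \in C$ and inserting this identity gives
\begin{equation*}
\|z - \pi_C(z)\|^2 + (z - \pi_C(z), v - \pi_C(v)) \leq 0.
\end{equation*}
Applying the hypothesis \eqref{eq:halfp} to $w_1 := z$ and $w_2 := v$ shows the inner product term is nonnegative, so $\|z - \pi_C(z)\|^2 \leq 0$, forcing $z = \pi_C(z) \in C$.

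The main obstacle is the reverse implication; the forward one is a one-line application of the projection inequality once the right admissible element is produced. For the reverse direction the trick is to guess the test element $u$ in the variational inequality for $\pi_C(z)$ and to exploit the algebraic identity $u - z = v - \pi_C(v)$ to make the cross term cancel against the hypothesis. No further machinery beyond the basic Hilbert-space projection theorem is needed.
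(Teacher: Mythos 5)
Your proof is correct. The forward direction is exactly the paper's argument: apply the HPR-property with $u=\pi_C(w_1)$, $v=w_2$ to obtain the admissible test element $\pi_C(w_1)+\pi_C(w_2)-w_2$ and plug it into the variational characterization of $\pi_C(w_1)$. In the reverse direction you take a cleaner route than the paper. Both arguments set $z:=u+\pi_C(v)-v$ and apply the hypothesis \eqref{eq:halfp} to the pair $\{v,z\}$, but the paper then expands $(v-\pi_C(v),z-\pi_C(z))\geq 0$, estimates via Cauchy--Schwarz and the $1$-Lipschitz continuity of $\pi_C$ (using $u=\pi_C(u)$ and $u-z=v-\pi_C(v)$) to force a chain of inequalities into equalities, and finally analyzes the equality case in Cauchy--Schwarz to conclude $u-\pi_C(z)=v-\pi_C(v)$, i.e. $z=\pi_C(z)\in C$. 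You instead test the projection inequality for $\pi_C(z)$ against $u\in C$, use the identity $u-\pi_C(z)=(v-\pi_C(v))+(z-\pi_C(z))$, and add the hypothesis to get $\|z-\pi_C(z)\|^2\leq 0$ directly. This buys a shorter argument that needs neither the non-expansiveness of $\pi_C$ nor the equality-case discussion of Cauchy--Schwarz, at no loss of generality; the paper's version, by contrast, extracts slightly more information along the way (all its intermediate inequalities are equalities), but that is not needed for the statement.
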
 
\begin{proof}
Suppose first that $C$ has the HPR-property. Fix $w_1,w_2 \in H$. Then, by \cite[Theorem 5.2]{Brezis}, 
\begin{equation*}
(w_1 - \pi_C(w_1) , u - \pi_C(w_1) ) \leq 0 \quad \forall u \in C.
\end{equation*}
Choosing $u:= \pi_C(w_1) + \pi_C(w_2) - w_2$, which lies in $C$ because of the HPR-property, implies \eqref{eq:halfp}.  Now suppose that \eqref{eq:halfp} holds and let $u \in C$ and $v \in H$ be arbitrary. Apply \eqref{eq:halfp} with $w_1 = v$ and $w_2 = u+ \pi_C(v) - v$ to find
\begin{align*}
0 & \leq ( v- \pi_C(v) , u+ \pi_C(v) -  v - \pi_C(w_2))  \\
& = -||v - \pi_C(v)||^2 + (v - \pi_C(v) , u - \pi_C(w_2) ) 
\\ & \leq - || v - \pi_C(v) ||^2 + ||v-\pi_C(v) || \;  || \pi_C(u) - \pi_C(w_2)|| 
\\ & \leq - ||v- \pi_C(v) ||^2  + ||v- \pi_C(v) ||  \; ||u- w_2 || \leq 0 
\end{align*}
since by construction $u - w_2 = v - \pi_C(v)$. Here we used in the last step that $\pi_C$ is Lipschitz continuous with Lipschitz constant $1$, see \cite[Proposition 5.3]{Brezis}. Therefore all inequalities have to be equalities. From equality in the Cauchy-Schwarz estimate one can infer that there is $\lambda > 0$ such that 
$u - \pi_C(w_2)  = \lambda (v- \pi_C(v) )$ .
Again because of equality in the above estimates one has $||v - \pi_C(v)|| = ||u- w_2 || = || \pi_C(u) - \pi_C(w_2)|| = ||u - \pi_C(w_2)||$. We obtain that $\lambda = 1$ and hence $u - \pi_C(w_2) = v -\pi_C(v)$.  
Rearranging we obtain that $u + \pi_C(v) - v  = \pi_C(w_2) \in C$. 
\end{proof}
\begin{prop}[A Gradient Estimate]
Suppose Assumption \ref{ass:main}. Let $u$ be an Obstacle Gradient Flow in $C$. Assume that $C$ satisfies the HPR-property, see Definition \ref{def:halfi}. Then $
\dist(\nabla\E(u(t)),C) \leq \dist(-\dot{u}(t),C)$ for each $t \geq 0$.
\end{prop}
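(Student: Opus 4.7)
The plan is to exploit the HPR-property to produce a highly specific test element for the flow variational inequality $(FVI)$, namely one built from $\nabla_H\E(u(t))$ itself, and then match things up with the variational characterization of the metric projection $\pi_C$. Throughout, set $\xi := \nabla_H\E(u(t))$ and $\eta := -\dot u(t)$, and work at a time $t$ at which $u$ is Fréchet differentiable and $(FVI)$ holds (so in fact for a.e.\ $t \geq 0$, which is how I would also interpret the statement).

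First I would plug $v := u(t) + \pi_C(\xi) - \xi$ into $(FVI)$. By the HPR-property (Definition~\ref{def:halfi}), this $v$ lies in $C$, so it is an admissible test element. Since $v - u(t) = \pi_C(\xi) - \xi$, a short rearrangement of $(FVI)$ yields
\begin{equation*}
(\xi - \eta,\, \xi - \pi_C(\xi)) \leq 0.
\end{equation*}
Expanding $\xi = (\xi - \pi_C(\xi)) + \pi_C(\xi)$ in the first slot, this is equivalent to
\begin{equation*}
\|\xi - \pi_C(\xi)\|^2 \leq (\eta - \pi_C(\xi),\, \xi - \pi_C(\xi)).
\end{equation*}

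Next I would apply the variational characterization $(w - \pi_C(w), c - \pi_C(w)) \leq 0$ for all $c \in C$ (see \cite[Theorem~5.2]{Brezis}) at $w = \xi$ and $c = \pi_C(\eta) \in C$, giving $(\pi_C(\eta) - \pi_C(\xi),\, \xi - \pi_C(\xi)) \leq 0$. Decomposing $\eta - \pi_C(\xi) = (\eta - \pi_C(\eta)) + (\pi_C(\eta) - \pi_C(\xi))$ and combining with the previous display, the middle term drops and I am left with
\begin{equation*}
\|\xi - \pi_C(\xi)\|^2 \leq (\eta - \pi_C(\eta),\, \xi - \pi_C(\xi)).
\end{equation*}
Cauchy--Schwarz and cancellation of one factor $\|\xi - \pi_C(\xi)\|$ (the case $\xi = \pi_C(\xi)$ being trivial) now give
\begin{equation*}
\dist(\xi, C) = \|\xi - \pi_C(\xi)\| \leq \|\eta - \pi_C(\eta)\| = \dist(\eta, C),
\end{equation*}
which is the claim.

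The only genuinely delicate step is the very first one: the HPR-property is precisely what makes the unusual test vector $u(t) + \pi_C(\xi) - \xi$ admissible, and without it one would only obtain that $\xi - \eta$ annihilates tangential directions at $u(t)$, which is much weaker. Once $(FVI)$ has been tested against that specific element, everything reduces to standard properties of the nearest-point projection on a closed convex set.
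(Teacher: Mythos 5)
Your proof is correct and follows essentially the same route as the paper: the key step in both is to use the HPR-property to make $u(t)+\pi_C(\nabla\E(u(t)))-\nabla\E(u(t))$ an admissible test element in $(FVI)$, then combine the resulting inequality with the projection characterization from \cite[Theorem 5.2]{Brezis} and Cauchy--Schwarz. The only cosmetic difference is that you plug in the specific element $\pi_C(-\dot u(t))$ where the paper keeps an infimum over $\theta\in C$ (which is attained precisely there), and your remark that everything holds only at points where $\dot u(t)$ exists and $(FVI)$ holds is an appropriate reading of the statement.
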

\begin{proof}
Note that $u(t)+ \pi_C(\nabla \E(u(t)) - \nabla \E(u(t)) \in C$ because of the HPR property. Using this as a test function for $(FVI)$ in Definition \ref{def:coneflow} and the inequality in  \cite[Theorem 5.2]{Brezis} we find 
\begin{align*}
\mathrm{dist}(\nabla\E(u(t)), C)^2 & = || \pi_C(\nabla\E(u(t))) - \nabla\E(u(t))||^2  \\
& = (  \pi_C(\nabla\E(u(t))) - \nabla\E(u(t)) ,  \pi_C(\nabla\E(u(t))) - \nabla\E(u(t)) ) 
\\ & =  (  \pi_C(\nabla\E(u(t)))  ,  \pi_C(\nabla\E(u(t))) - \nabla\E(u(t)) ) \\ &  \quad - (  \nabla\E(u(t)) ,  \pi_C(\nabla\E(u(t))) - \nabla\E(u(t)) ) \\
& =   (  \pi_C(\nabla\E(u(t)))  ,  \pi_C(\nabla\E(u(t))) - \nabla\E(u(t)) ) \\ &  \quad - (  \nabla\E(u(t)) , u(t) +  \pi_C(\nabla\E(u(t))) - \nabla\E(u(t)) - u(t) ) \\
 & \leq  (  \pi_C(\nabla\E(u(t)))  ,  \pi_C(\nabla\E(u(t))) - \nabla\E(u(t)) ) \\ &  \quad + (  \dot{u}(t)  , u(t) +  \pi_C(\nabla\E(u(t))) - \nabla\E(u(t)) - u(t) ) \\
 &  = \inf_{\theta \in C} \left[   (  \pi_C(\nabla\E(u(t))) - \theta   ,  \pi_C(\nabla\E(u(t))) - \nabla\E(u(t)) ) \right.  \\ &  \quad  \qquad + \left. (  \dot{u}(t)  + \theta  ,   \pi_C(\nabla\E(u(t))) - \nabla\E(u(t))  ) \right] 
 \\ & \leq \inf_{\theta \in C} (  \dot{u}(t)  + \theta  , \pi_C(\nabla\E(u(t))) - \nabla\E(u(t))  )
 \\ & \leq \inf_{\theta \in C} ||  \dot{u}(t)  + \theta|| \;  || \pi_C(\nabla\E(u(t))) - \nabla\E(u(t)) || 
 \\ & = \mathrm{dist}( \nabla \E(u(t))  , C) \inf_{\theta \in C} ||  \dot{u}(t)  + \theta||. 
\end{align*}
Finally, the claim is shown once one observes
\begin{equation*}
\inf_{\theta \in C } || \dot{u}(t) + \theta || = \inf_{\theta \in C} || (- \dot{u}(t)) - \theta || = \mathrm{dist}(-\dot {u}(t), C). \qedhere
\end{equation*}
\end{proof}

\subsection{The Obstacle Gradient Flow in the Context of Gradient Flows in Metric Spaces}

The following Definition is a standard notion for Gradient Flows in Metric Spaces. The notions appear for example in \cite{Santambrogio} and \cite{Guide}.
\begin{definition}[Metric Slopes and Gradient Flows]
Let $(X,d)$ be a metric space and $\E:X \rightarrow \overline{\mathbb{R}}$ be a map. Let $u: [0,\infty) \rightarrow X$ be a curve in $X$ that is absolutely continuous in the sense of \cite[Definition 1.1.1]{Ambrosio}. Then we call for each $t \geq 0$ 
\begin{equation}\label{eq:metder}
|u'|(t) := \limsup_{ s \rightarrow t } \frac{d(u(s),u(t))}{|s-t|}
\end{equation}
the \emph{metric derivative} of $u$ at $t$. Moreover we define for $w \in X$ 
\begin{equation}\label{eq:2.75}
|\partial \E|(w) := \limsup_{v \rightarrow w, v \neq w} \frac{(\E(w) -\E(v))^+}{d(w,v)}
\end{equation}
the \emph{metric slope} of $\E$ at $w\in X$, where the $\limsup$ is taken with respect to convergence in $X$. The curve $u$ is called $EDE$-Gradient Flow for $\E$ if for each $T> 0$ 
\begin{equation*}
\E(u(T)) + \frac{1}{2}\int_0^T |u'|^2(s) \; \mathrm{d}s + \frac{1}{2} \int_0^T |\partial \E|^2(u(s)) \; \mathrm{d}s = \E(u(0)). 
\end{equation*} 
Here $EDE$ stands for \textit{`Energy Dissipation Equation'}. 
\end{definition}
\begin{remark}
The defining equation for an $EDE$-gradient flow is inspired by an equation that symbolizes the steepest possible energy descent in the setting of a smooth gradient flow in a Hilbert Space, for details see \cite{Santambrogio}.
\end{remark}
\begin{prop}[Metric Quantities for Convex Subsets of $H$] \label{prop:quantity}  Suppose $C \subset H$ is convex and closed. Set $X := C$ and let $d$ be the distance on $C$ that is induced by the norm on $H$. Then the following assertions hold true.
\begin{enumerate}
\item If $u: [0, \infty) \rightarrow X$ is absolutely continuous on $X$ then it is almost everywhere differentiable and 
\begin{equation*}
|u'|(t) = ||\dot{u}(t)||, \quad a.e. \; \;  t \geq 0 .
\end{equation*}
\item If $\E \in C^{1,1}_{loc}(H, \mathbb{R})$ then for each $w \in H$ we have
\begin{equation}\label{eq:dingenskircheni}
|\partial \E|(w) = \sup_{v \in C, v \neq w} \frac{(-\nabla\E(w), v-w)^+}{||v-w||}.
\end{equation}
\end{enumerate} 
\end{prop}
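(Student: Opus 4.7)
For part (1), the essential point is that on a Hilbert space, absolutely continuous curves are strongly differentiable almost everywhere. Concretely, since $X=C$ carries the metric induced by the $H$-norm, a curve $u:[0,\infty)\to X$ is absolutely continuous in the sense of \cite[Definition 1.1.1]{Ambrosio} if and only if it is absolutely continuous as an $H$-valued curve on each bounded subinterval, and hence (by the Radon--Nikodym property of Hilbert spaces, as used in \cite[Theorem 1.4.35]{Cazenave}) is Fréchet differentiable a.e. At any point $t$ of differentiability,
\[
\left|\,\frac{\|u(s)-u(t)\|}{|s-t|}-\|\dot u(t)\|\,\right|\le\Bigl\|\frac{u(s)-u(t)}{s-t}-\dot u(t)\Bigr\|\;\xrightarrow{s\to t}\;0,
\]
so $|u'|(t)=\|\dot u(t)\|$ follows directly from the definition \eqref{eq:metder}.

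For part (2), write $S(w):=\sup_{v\in C,\,v\neq w}\frac{(-\nabla\E(w),v-w)^+}{\|v-w\|}$. The idea is to compare the incremental quotient of $\E$ with its first-order Taylor polynomial at $w$. Fix $R>0$ and let $L$ be the Lipschitz constant of $\nabla\E$ on $\overline{B_R(w)}$, which exists by $\E\in C^{1,1}_{loc}$. For $v\in B_R(w)$, the fundamental theorem of calculus applied to $r\mapsto\E(w+r(v-w))$ gives
\[
\bigl|\,\E(v)-\E(w)-(\nabla\E(w),v-w)\,\bigr|\;\le\;\tfrac{L}{2}\,\|v-w\|^2.
\]
For the inequality $|\partial\E|(w)\le S(w)$, use the pointwise estimate $(a+b)^+\le a^++|b|$ to obtain, for $v\in C\cap B_R(w)\setminus\{w\}$,
\[
\frac{(\E(w)-\E(v))^+}{\|v-w\|}\;\le\;\frac{(-\nabla\E(w),v-w)^+}{\|v-w\|}+\tfrac{L}{2}\|v-w\|\;\le\;S(w)+\tfrac{L}{2}\|v-w\|,
\]
and take $\limsup_{v\to w}$.

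For the reverse inequality, I will exploit the convexity of $C$: for arbitrary $v\in C$ with $v\neq w$, the segment $v_t:=w+t(v-w)$ lies in $C$ for $t\in(0,1]$, satisfies $v_t\neq w$, and $v_t\to w$ as $t\to 0+$. The Taylor estimate above yields
\[
\frac{\E(w)-\E(v_t)}{\|v_t-w\|}\;=\;\frac{-(\nabla\E(w),v-w)}{\|v-w\|}\;+\;O(t)\qquad(t\to 0+),
\]
so taking the positive part and feeding $v_t$ into the $\limsup$ defining $|\partial\E|(w)$ produces $|\partial\E|(w)\ge\frac{(-\nabla\E(w),v-w)^+}{\|v-w\|}$; supremising over $v\in C\setminus\{w\}$ gives $|\partial\E|(w)\ge S(w)$. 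The main subtlety, and really the only point where care is needed, is the interplay of the positive part with the second-order remainder in the Taylor expansion; the elementary inequality $(a+b)^+\le a^++|b|$ resolves this cleanly in both directions, while convexity of $C$ is precisely what secures the admissibility of $v_t$ in the reverse inequality.
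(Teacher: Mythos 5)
Your proposal is correct and follows essentially the same route as the paper: part (1) via a.e.\ Fréchet differentiability of $H$-valued absolutely continuous curves, and the reverse inequality in part (2) via the convex segment $w+t(v-w)\in C$ and differentiation at $t=0$. The only (cosmetic) difference is in the upper bound of part (2), where you invoke a quantitative Taylor remainder $\tfrac{L}{2}\|v-w\|^2$ on a ball where $\nabla\E$ is Lipschitz (which should be taken small enough that $C^{1,1}_{loc}$ guarantees such an $L$), whereas the paper gets the same conclusion using only continuity of $\nabla\E$ through the estimate $\sup_{t\in[0,1]}\|\nabla\E(v+t(w-v))-\nabla\E(w)\|\to 0$ as $v\to w$.
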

\begin{proof}
We show $(1)$ first. If $u$ is absolutely continuous on $C$ then it is also absolutely continuous on $H$. By \cite[Theorem 1.4.35]{Cazenave}, it also lies in $W^{1,1}_{loc}(0,\infty)$ and is almost everywhere differentiable. Using  \eqref{eq:metder} we find that for each point $t$ of differentiability of $u$ 
\begin{equation*}
|u'|(t) = \limsup_{s \rightarrow t } \frac{d(u(t),u(s))}{|t-s|} = \limsup_{s \rightarrow t} \left\Vert \frac{u(t)- u(s)}{t-s} \right\Vert  = ||\dot{u}(t)||. 
\end{equation*}
Claim $(1)$ follows. For Claim $(2)$ let $\E$ be as in the statement and fix $w \in C$. For $'\leq'$ in \eqref{eq:dingenskircheni} observe that 
\begin{align*}
|\partial \E| (w) & = \limsup_{ v\rightarrow w, v \in C } \frac{(\E(w)- \E(v))^+}{||w-v||}
\\ & = \limsup_{v \rightarrow w, v \in C} \frac{1}{||v-w||} \left( \int_0^1 \frac{d}{dt} \E(v+ t(w-v)) \; \mathrm{d}t \right)^+
\\ & = \limsup_{ v \rightarrow w, v \in C } \frac{1}{||w-v||}\left( \int_0^1 \left( \nabla \E(v+ t(w-v)), w-v \right) \; \mathrm{d}t \right)^+ 
\\ & \leq \limsup_{ v \rightarrow w, v \in C } \left[ \frac{1}{||w-v||} \left( \int_0^1 \left( \nabla \E(v+ t(w-v)) - \nabla \E(w),w - v\right) \; \mathrm{d}t \right)^+ \right. \\ &  \qquad \qquad \qquad  \left. + \frac{1}{||w-v||} (\nabla \E(w), w-v)^+ \right]
\\ & \leq \limsup_{ v \rightarrow w, v \in C }\sup_{t \in [0,1]} || \nabla \E(v+t(w-v))- \nabla \E(w)|| \;  \\ & \qquad \qquad \quad + \sup_{ v \neq w, v \in C } \frac{(-\nabla \E(w) ,v - w)^+}{||v-w||}
\\ & =  \sup_{ v \neq w,  v \in C } \frac{(-\nabla \E(w) ,v - w)^+}{||v-w||},
\end{align*}
where the last equality follows from the continuity of $\nabla \E$. Now we show $'\geq'$. Fix $v \in C$ such that $v \neq w$. Then by \eqref{eq:2.75} 
\begin{align*}
|\partial \E|(w) & \geq \limsup_{ t \rightarrow 0 + } \frac{(\E(w)- \E(tv+ (1-t)w))^+}{||w-(tv+ (1-t)w)||}
\\ & \geq  \limsup_{ t \rightarrow 0 + } \frac{1}{||v-w||} \left( \frac{\E(w)- \E(tv+ (1-t)w)}{t} \right)^+
\\ & = \frac{1}{||v-w||} \left( - \frac{d}{dt}_{\mid_{t=0}} \E(tv+ (1-t)w) \right)^+ 
\\ & = \frac{1}{||v-w||} (- \nabla \E(w), v-w )^+ .
\end{align*}
Taking the supremum over all $v \in C$ such that $v \neq w$, the claim follows. 
\end{proof}
\begin{prop}[$EDE$-Property of the Obstacle Gradient Flow]
Let $X= C$ be a convex subset of $H$ and $(u(t))_{t \geq 0 }$ be an Obstacle Gradient Flow in $C$. Then $u: [0, \infty) \rightarrow X$ is an $EDE$ gradient flow in $X$. 
\end{prop}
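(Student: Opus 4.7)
The plan is to reduce the EDE equation to the identity $|\partial\mathcal{E}|(u(t)) = \|\dot u(t)\| = |u'|(t)$ for almost every $t$, after which the dissipation identity \eqref{eq:abl} from Proposition \ref{prop:enrggdisp} closes the calculation in one line. Since $u \in W^{1,2}_{\mathrm{loc}}((0,\infty),H)$ and $u(t)\in C$ for all $t$, $u$ is absolutely continuous as a curve in $X=C$, so Proposition \ref{prop:quantity}(1) immediately yields $|u'|(t)=\|\dot u(t)\|$ a.e.\ and Proposition \ref{prop:quantity}(2) identifies $|\partial \mathcal{E}|$ at points of $C$ with the supremum formula.

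The upper bound $|\partial\mathcal{E}|(u(t)) \le \|\dot u(t)\|$ follows directly from $(FVI)$: for every $v\in C$, $v\neq u(t)$, the inequality $(-\nabla\mathcal{E}(u(t)), v-u(t)) \le (\dot u(t), v-u(t)) \le \|\dot u(t)\|\,\|v-u(t)\|$ holds, and taking the supremum over admissible $v$ gives the claim via Proposition \ref{prop:quantity}(2). The reverse inequality is where one must be a bit careful. For a.e.\ $t>0$ I plug the admissible choice $v = u(t+h)$ (with $h>0$ small) into the supremum in \eqref{eq:dingenskircheni} combined with the elementary bound $(\mathcal{E}(u(t))-\mathcal{E}(u(t+h)))^+/\|u(t+h)-u(t)\| \le |\partial\mathcal{E}|(u(t))$. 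At any point $t$ that is simultaneously a Lebesgue point of $s\mapsto \|\dot u(s)\|^2$, a point of Fréchet differentiability of $u$, and where $\|\dot u(t)\|>0$, Proposition \ref{prop:enrggdisp} gives $\mathcal{E}(u(t))-\mathcal{E}(u(t+h))=\int_t^{t+h}\|\dot u(s)\|^2 ds$, and dividing numerator and denominator by $h$ one obtains the limit $\|\dot u(t)\|^2/\|\dot u(t)\|=\|\dot u(t)\|$, so $|\partial\mathcal{E}|(u(t))\ge \|\dot u(t)\|$. The set where $\|\dot u(t)\|=0$ is handled by the trivial bound.

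With $|\partial\mathcal{E}|(u(t)) = \|\dot u(t)\| = |u'|(t)$ a.e.\ established, the EDE equation reduces to Proposition \ref{prop:enrggdisp}: integrating \eqref{eq:abl} on $[0,T]$ gives $\mathcal{E}(u(0))-\mathcal{E}(u(T)) = \int_0^T \|\dot u(s)\|^2 ds$, so
\begin{equation*}
\mathcal{E}(u(T)) + \tfrac12\!\int_0^T\!|u'|^2(s)\,ds + \tfrac12\!\int_0^T\!|\partial\mathcal{E}|^2(u(s))\,ds = \mathcal{E}(u(T)) + \int_0^T\!\|\dot u(s)\|^2 ds = \mathcal{E}(u(0)),
\end{equation*}
which is the EDE identity. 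The main (mild) obstacle is the lower bound $|\partial\mathcal{E}|(u(t))\ge \|\dot u(t)\|$; since $|\partial\mathcal{E}|$ is only known to be a candidate upper gradient rather than a strong one a priori, one must use the flow itself (via the variational inequality and the energy dissipation from Proposition \ref{prop:enrggdisp}) to produce a competitor direction realizing the slope, and this is precisely the role of the test $v=u(t+h)$.
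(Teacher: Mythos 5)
Your proposal is correct and follows essentially the same route as the paper: the upper bound $|\partial \E|(u(t)) \leq \|\dot u(t)\|$ via $(FVI)$ and Proposition \ref{prop:quantity}, the lower bound by testing the slope along the curve itself using the dissipation identity \eqref{eq:abl}, and then integrating to get the EDE identity. One small caution: for fixed $h$ the ``elementary bound'' $(\E(u(t))-\E(u(t+h)))^+/\|u(t+h)-u(t)\| \leq |\partial\E|(u(t))$ is not valid, since the metric slope is a $\limsup$ rather than a supremum; it only becomes available after passing to the limit $h \to 0+$ (using that $u(t+h)\to u(t)$ with $u(t+h)\neq u(t)$ when $\|\dot u(t)\|>0$), which is exactly how you — and the paper — in fact use it.
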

\begin{proof}
First, we show that 
\begin{equation*}
|\partial \E|(u(t)) \leq ||\dot{u}(t)|| \quad a.e. \; t \geq 0.
\end{equation*}
For this observe, using Proposition \ref{prop:quantity} and $(FVI)$ that 
\begin{align}\label{eq:opttr}
|\partial \E|(u(t)) & = \sup_{ v \neq u(t) } \frac{\big(-( \nabla \E(u(t)) , v-u(t) )\big)^+}{||v-u(t)||} \nonumber
\\ & \leq \sup_{v \neq u(t)} \frac{(\dot{u}(t), v- u(t))^+}{||v-u(t)||} \nonumber
\\ & \leq \sup_{v \neq u(t) } \frac{|( \dot{u}(t) , v- u(t) )|}{||v- u(t)||} \leq ||\dot{u}(t)|| .
\end{align}
Next, we show that 
\begin{equation}\label{eq:2.92}
|\partial \E|(u(t)) \geq ||\dot{u}(t)|| \quad a.e. \;  t \geq 0 . 
\end{equation}
Let $t$ be a point of Frechét differentiability of $u$. 
If $\dot{u}(t) = 0$ then the statement is trivial. If not then 
\begin{equation*}
0 < ||\dot{u}(t) || = \lim_{s \rightarrow t } \frac{||u(t)- u(s)||}{|t-s|}  
\end{equation*}
and therefore there exists $r > 0 $ such that $u(s) \neq u(t)$ for each $s \in (t-r,t+r) \setminus \{t\}$. Using Proposition \ref{prop:enrggdisp} and the fact that $\E \circ u $ is decreasing we find 
\begin{align*}
||\dot{u}(t)||^2 &  =- \frac{d}{dt} \E(u(t)) = \limsup_{s \rightarrow t+ } \frac{\E(u(t))- \E(u(s))}{s-t } =  \limsup_{s \rightarrow t+} \frac{(\E(u(t))- \E(u(s)))^+}{|t-s|} 
\\ & = \limsup_{s \rightarrow t } \frac{(\E(u(t))- \E(u(s)))^+}{||u(t)-u(s)||}\frac{||u(t)- u(s)||}{|t-s|}
\\ & \leq \limsup_{ w \rightarrow u(t), w \in C } \frac{(\E(u(t))- \E(w))^+}{||u(t)- w||} ||\dot{u}(t)||.
\end{align*} 
Equation \eqref{eq:2.92} follows immediately. 
Now we fix $T> 0$ and compute using \eqref{eq:opttr} and Propositon \ref{prop:enrggdisp}
\begin{align*}
 & \E(u(T)) + \frac{1}{2}\int_0^T |u'|(s)^2 \; \mathrm{d}s + \frac{1}{2} \int_0^T |\partial \E|(u(s))^2 \; \mathrm{d}s \\ & = \E(u(T)) +   \frac{1}{2}\int_0^T |u'|(s)^2 \; \mathrm{d}s + \frac{1}{2} \int_0^T ||\dot{u}(s)||^2 \; \mathrm{d}s 
 \\ & = \E(u(T)) + \int_0^T ||\dot{u}(s)||^2 \; \mathrm{d}s 
 \\ & = \E(u(T)) + \int_0^T - \frac{d}{ds}\E(u(s)) \; \mathrm{d}s = \E(u(0)). \qedhere
\end{align*}  
\end{proof}
\section{Existence}\label{sec:existence}
We will show the existence of Obstacle Gradient Flows, provided that the initial value satisfies a condition which we will formulate and motivate in this section. Throughout this section we assume Assumption \ref{ass:main}. Assumption \ref{ass:growth} will not be needed for the existence proof.

\subsection{Construction of the Flow Trajectory} 
 
\begin{prop}[Existence of Minimizing Movement Sequences, Proof in Appendix \ref{sec:App}]\label{prop:standard}
Let $v \in H$. Define for fixed $\tau > 0 $ 
\begin{equation*}
 \Phi_{v,\tau}: H \rightarrow \mathbb{R}, \; \;  u \mapsto \frac{||u-v||^2}{2\tau}+ \mathcal{E}(u).  
 \end{equation*}
Then there exists $w = w(v, \tau ,\E)  \in C$ such that 
\begin{equation}\label{eq:3.2}
\Phi_{v, \tau} (w) = \inf_{u \in C} \Phi_{v,\tau} (u) .
\end{equation} 
Moreover, each $w \in C$ that is a solution of \eqref{eq:3.2} satisfies
 \begin{equation}\label{eq:eulerlag}
 \frac{1}{\tau} (w-v, u-w) + (\nabla_H \mathcal{E}(w), u - w) \geq 0 \quad \forall u \in C. 
\end{equation}  
\end{prop}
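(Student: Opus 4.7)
The statement splits into an existence assertion and a first-order optimality condition, both of which fit into the standard direct-method-plus-convexity template. I will treat them in that order.

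For existence of $w$, I plan to use the direct method in the Hilbert space $H$. The functional $\Phi_{v,\tau}$ is bounded below by $\alpha$ because $\E \geq \alpha$ and the quadratic penalty is nonnegative, so $m := \inf_{u \in C} \Phi_{v,\tau}(u) \in [\alpha,\infty)$ is finite (it is attained below $+\infty$ provided $C \neq \emptyset$; I would insert a short remark that the result is vacuous otherwise). Pick a minimizing sequence $(u_n) \subset C$. The bound
\begin{equation*}
\frac{\|u_n - v\|^2}{2\tau} \leq \Phi_{v,\tau}(u_n) - \alpha \to m - \alpha
\end{equation*}
shows that $(u_n)$ is bounded in $H$, so after extraction $u_n \rightharpoonup w$ weakly for some $w \in H$. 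Since $C$ is convex and closed, it is weakly closed, so $w \in C$. The map $u \mapsto \|u - v\|^2$ is convex and continuous, hence weakly lower semicontinuous, and $\E$ is weakly lower semicontinuous by Assumption \ref{ass:main}; therefore $\Phi_{v,\tau}$ is weakly lower semicontinuous and
\begin{equation*}
\Phi_{v,\tau}(w) \leq \liminf_{n \to \infty} \Phi_{v,\tau}(u_n) = m,
\end{equation*}
so $w$ realizes the infimum.

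For \eqref{eq:eulerlag}, I would exploit the convexity of $C$ to form admissible variations. Given any $u \in C$ and $t \in (0,1]$, the convex combination $w_t := w + t(u - w) = (1-t)w + tu$ belongs to $C$, and by minimality of $w$,
\begin{equation*}
\Phi_{v,\tau}(w_t) - \Phi_{v,\tau}(w) \geq 0.
\end{equation*}
The quadratic part expands explicitly as
\begin{equation*}
\frac{\|w_t - v\|^2 - \|w - v\|^2}{2\tau} = \frac{t}{\tau}(w - v, u - w) + \frac{t^2}{2\tau}\|u - w\|^2,
\end{equation*}
while the $\E$-part is handled via $\E \in C^{1,1}_{loc}$: by the fundamental theorem of calculus
\begin{equation*}
\E(w_t) - \E(w) = t \int_0^1 (\nabla_H \E(w + st(u-w)), u - w) \, \mathrm{d}s.
\end{equation*}
Dividing the inequality by $t$, letting $t \to 0+$, and using continuity of $\nabla_H \E$ along the segment from $w$ to $u$ yields
\begin{equation*}
\frac{1}{\tau}(w - v, u - w) + (\nabla_H \E(w), u - w) \geq 0,
\end{equation*}
which is \eqref{eq:eulerlag}.

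\textbf{Main obstacle.} Neither step is technically demanding; the only subtlety is the coercivity used to bound the minimizing sequence, which would fail without the lower bound $\E \geq \alpha$ from Assumption \ref{ass:main}. A second small point worth flagging is that the argument never uses the global Lipschitz assumption on $\nabla_H \E$ (Assumption \ref{ass:growth}); local $C^1$-regularity of $\E$ along the segment $[w, u]$ is all that is needed to pass to the limit in the difference quotient for \eqref{eq:eulerlag}.
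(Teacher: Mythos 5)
Your proposal is correct and follows essentially the same route as the paper's appendix proof: the direct method with the coercivity bound $\tfrac{\|u_n-v\|^2}{2\tau}\leq \Phi_{v,\tau}(u_n)-\alpha$, weak closedness of $C$, weak lower semicontinuity of the sum, and then the one-sided derivative of $\Phi_{v,\tau}$ along convex combinations $(1-t)w+tu$ to obtain \eqref{eq:eulerlag}. Your expansion of the quadratic term and the fundamental-theorem-of-calculus treatment of $\E$ simply spell out the ``easy computation'' the paper leaves implicit, so there is nothing to correct.
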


\begin{algo}\label{algo:minimov} (Minimizing Movement Scheme)

Input: $u_0 \in C$, $\tau > 0 $. 

Output: A sequence $(u_{k \tau})_{k \in \mathbb{N}} $. 
\vspace{0.2cm}

\hspace{2.3cm} Set $ \quad u_{0\tau} := u_0$.

\hspace{2.3cm} For $ \quad k \in \mathbb{N}: $
\begin{equation*}
 \textrm{Choose} \quad  u_{k \tau} \in \arg \min_{u \in C} \Phi_{u_{(k-1)\tau} , \tau }. 
\end{equation*}
\end{algo}
\begin{remark}
It may happen that minimizers of the variational problems are not unique. In this case, the algorithm fixes some choice of minimizers. Whenever we refer to a \emph{minimizing movement sequence} $(u_{k \tau})_{k \in \mathbb{N}} $ from now on, we mean an arbitrary but fixed choice of minimizers unless otherwise specified.
\end{remark}
\begin{remark}
To keep notation consistent later when we consider different time stepwidths $\tau$, one should actually write $u_{k, \tau}$ instead of $u_{k\tau}$. We will nevertheless use the other notation for the sake of simplicity. 
\end{remark}
\begin{remark}(Discrete Energy Descent) \label{rem:energdec}

Note that $\mathcal{E}(u_{k\tau}) \leq \mathcal{E}(u_{(k-1)\tau})$. More exactly, 
\begin{equation}\label{eq:energdec}
\frac{||u_{k\tau} - u_{(k-1)\tau}||^2}{2\tau} \leq   \E(u_{(k-1)\tau} ) - \E(u_{k\tau}) ,
\end{equation} 
which is easy to see rearranging the inequality $\Phi_{(k-1) \tau} (u_{k\tau}) \leq \Phi_{(k-1) \tau} (u_{(k-1)\tau} )$.
\end{remark}

\begin{definition}[Interpolations]
Let $u_0 \in C, \tau > 0 $ and $(u_{k\tau})_{k \in \mathbb{N}} \subset C$ be a minimizing movement sequence generated by Algorithm \ref{algo:minimov}. Then we define the \emph{energy interpolation} of $(u_{k\tau})_{k \in \mathbb{N}}$ to be 
\begin{equation*}
\mathcal{E}_\tau(t) := \sum_{k = 1}^\infty \chi_{[(k-1) \tau , k \tau)}(t) \mathcal{E}(u_{(k-1) \tau}) \quad (t \geq 0 ),
\end{equation*}
where $\chi_E$ denotes the characteristic function of a set $E \subset \mathbb{R}$.
Moreover, we define 
\begin{align}\label{eq:lininter}
\overline{u}^\tau(t) & := \sum_{k = 1}^\infty \chi_{[(k-1) \tau , k \tau)}(t) u_{(k-1) \tau} \nonumber ,  \\
u^\tau(t) & := \sum_{k = 1}^\infty \chi_{[(k-1) \tau , k \tau)}(t)\left(  \left( 1 -   \frac{t- (k-1) \tau}{\tau} \right) u_{(k-1) \tau} + \frac{t- (k-1) \tau}{\tau } u_{k\tau} \right).  
\end{align}
\end{definition}

\begin{definition}[Preconditioned Initial Values] \label{def:precon}

We call $u_0 \in C$ a preconditioned initial value, if there exists a family of minimizing movement sequences $(u_{k\tau})_{k \in \mathbb{N}, \tau \in (0,1)}$ starting at $u_0$ such that for each $T>0$ the set $\{u_{k \tau} | \; k \in \mathbb{N}, \tau \in ( 0, 1) , k  \tau \leq T \} $  is precompact in $H$. We denote by $\mathcal{P}$ the set of preconditioned initial values.
\end{definition}

\begin{remark}
Of course it is possible that $\mathcal{P}= \emptyset$. We will analyze $\mathcal{P}$ for some special problems in Section \ref{Sec:Navier}. A more general statement about $\mathcal{P}$ would be very desirable, but is at this point out of reach for this article. 
\end{remark}

 \begin{prop}[Uniform Hölder Bound] \label{prop:bddc1/2} Let $t,s \geq  0 $. Then for each $\tau \in (0,1)$, 
 \begin{equation}\label{eq:hoelder}
 ||u^\tau(t) - u^\tau(s) || \leq 3 \sqrt{2} (\mathcal{E}(u_0) - \alpha)^\frac{1}{2} \sqrt{|t-s|},
 \end{equation}
 where  $u^\tau$ is defined as in \eqref{eq:lininter}. 
\end{prop}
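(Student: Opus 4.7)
The plan is to exploit the discrete energy dissipation inequality of Remark~\ref{rem:energdec},
\begin{equation*}
||u_{k\tau}-u_{(k-1)\tau}||^2 \leq 2\tau(\E(u_{(k-1)\tau})-\E(u_{k\tau})),
\end{equation*}
which upon summation telescopes into the key estimate
\begin{equation*}
\sum_{i=j+1}^{m}||u_{i\tau}-u_{(i-1)\tau}||^2 \leq 2\tau(\E(u_0)-\alpha)
\end{equation*}
for any $j \leq m$. A case distinction on whether $s$ and $t$ lie in the same cell or in distinct cells of the grid $\tau \mathbb{N}$, combined with Cauchy--Schwarz, will give the claim.

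Assume $0 \leq s \leq t$ and write $s \in [(j-1)\tau, j\tau)$, $t \in [(k-1)\tau, k\tau)$ with $j \leq k$. If $j=k$ the interpolation $u^\tau$ is affine on $[s,t]$ and I would compute directly
\begin{equation*}
||u^\tau(t)-u^\tau(s)|| = \tfrac{t-s}{\tau}||u_{k\tau}-u_{(k-1)\tau}|| \leq \tfrac{t-s}{\sqrt{\tau}}\sqrt{2(\E(u_0)-\alpha)} \leq \sqrt{2(t-s)}\sqrt{\E(u_0)-\alpha},
\end{equation*}
using $t-s \leq \tau$. If $j < k$ I would split
\begin{equation*}
u^\tau(t)-u^\tau(s) = \bigl(u^\tau(t)-u_{(k-1)\tau}\bigr) + \bigl(u_{(k-1)\tau}-u_{j\tau}\bigr) + \bigl(u_{j\tau}-u^\tau(s)\bigr)
\end{equation*}
and bound the two edge terms exactly as in the single-cell case, obtaining majorants $\sqrt{2(t-(k-1)\tau)(\E(u_0)-\alpha)}$ and $\sqrt{2(j\tau-s)(\E(u_0)-\alpha)}$, respectively. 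For the middle piece I would apply the triangle inequality together with Cauchy--Schwarz and the telescoped dissipation bound:
\begin{equation*}
||u_{(k-1)\tau}-u_{j\tau}|| \leq \sum_{i=j+1}^{k-1}||u_{i\tau}-u_{(i-1)\tau}|| \leq \sqrt{k-1-j}\,\sqrt{2\tau(\E(u_0)-\alpha)}.
\end{equation*}

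Each of the three lengths $j\tau-s$, $(k-1-j)\tau$, and $t-(k-1)\tau$ is at most $t-s$, so each corresponding $\sqrt{\cdot}$ is at most $\sqrt{t-s}$; summing the three contributions produces the stated factor $3\sqrt{2}$. The argument is entirely elementary and I do not foresee a real obstacle: the only item requiring modest care is the bookkeeping that ensures the two edge contributions package as $\sqrt{2\cdot(\text{length})\cdot(\E(u_0)-\alpha)}$ in precisely the same form as the middle term, so that the three $\sqrt{\cdot}$s line up and combine cleanly.
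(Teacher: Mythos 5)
Your proposal is correct and follows essentially the same route as the paper's proof: the same-cell case handled by the affine interpolation together with the discrete dissipation estimate of Remark~\ref{rem:energdec}, and the multi-cell case split into two edge pieces plus a middle telescoped sum bounded via Cauchy--Schwarz, with each of the three lengths majorized by $|t-s|$ to yield the factor $3\sqrt{2}$. The only (immaterial) difference is that the paper keeps the sharper intermediate quantity $\E(u_{l\tau})-\E(u_{(k-1)\tau})$ in the middle term before relaxing to $\E(u_0)-\alpha$.
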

\begin{proof}
First assume that $t,s \in [(k-1)\tau,k \tau] $. Then $|t-s| < \tau $ and hence using \eqref{eq:energdec}
\begin{align}\label{eq:locholder}
||u^\tau(t) - u^\tau(s) || & = \frac{|t-s|}{\tau} ||u_{(k-1) \tau } - u_{k \tau } || 
\nonumber \\ & \leq \frac{|t-s|}{\tau} \sqrt{2\tau } \sqrt{\mathcal{E}(u_{(k-1) \tau }) - \mathcal{E}(u_{k\tau})} \nonumber \\ &  \leq \sqrt{2 }\frac{|t-s|}{\sqrt{\tau} }\sqrt{ \mathcal{E}(u_0) - \alpha } \leq \sqrt{2} \frac{|t-s|}{\sqrt{|t-s|}} \sqrt{\mathcal{E}(u_0) - \alpha } \nonumber \\ & = \sqrt{2}\sqrt{|t-s|} \sqrt{\mathcal{E}(u_0) - \alpha}.  
\end{align} 
Now assume that there is $k,l \in \mathbb{N}$ such that $ (l-1) \tau \leq t \leq l \tau \leq (k-1) \tau \leq s \leq k \tau $. Using \eqref{eq:locholder} we obtain 
\begin{align*}
||u^\tau(t) - u^\tau(s) || & \leq ||u^\tau (s) - u^\tau((k-1) \tau) || + || u^\tau((k-1) \tau) - u^\tau(l\tau) || \\ & \; \; \; \;  + ||u^\tau(l \tau) - u^\tau(t) || 
 \nonumber \\ & \leq \sqrt{2} \sqrt{|s-(k-1)\tau |} \sqrt{\mathcal{E}(u_0)- \alpha } + \sqrt{2} \sqrt{|l \tau - t|} \sqrt{\mathcal{E}(u_0)- \alpha}  \\ & \; \; \; \;\qquad  + \sum_{m = l}^{k-2} ||u_{(m + 1) \tau } - u_{m \tau } || 
\nonumber \\ &  \leq 2 \sqrt{2} \sqrt{|t-s|} \sqrt{\mathcal{E}(u_0)- \alpha}  \\ & \; \; \; \; \qquad  + \sqrt{(k-1-l)} \sqrt{\sum_{m=l}^{k-2} ||u_{(m+1) \tau} - u_{m \tau } ||^2 }
\nonumber \\ & \leq 2 \sqrt{2} \sqrt{|t-s|} \sqrt{\mathcal{E}(u_0)- \alpha} \\ &  \; \; \; \;\qquad  + \sqrt{k-1-l} \sqrt{\sum_{ m = l}^{k-2} 2 \tau (  \E(u_{m\tau}))- \mathcal{E}(u_{(m+1)\tau}))   } \nonumber \\ & 
\leq 2 \sqrt{2} \sqrt{|t-s|} \sqrt{\mathcal{E}(u_0)- \alpha}  + \sqrt{2}\sqrt{(k-1) \tau - l \tau } \sqrt{  \mathcal{E}(u_{l \tau} ) - \mathcal{E}(u_{(k-1) \tau })  } \nonumber \\ & \leq 3 \sqrt{2} \sqrt{\mathcal{E}(u_0)- \alpha}\sqrt{|t-s|}. \qedhere
\end{align*}
\end{proof}
\begin{cor}[A Growth Rate]\label{cor:growthest}
For each $T> 0$ the restrictions of the linear interpolations $(u^{\tau}(t))_{ \tau > 0, t \in [0,T] } $ are locally uniformly bounded, more precisely 
\begin{equation}\label{eq:unifest}
\sup_{t \in [0,T], \tau > 0  } || u^{\tau}(t) || \leq ||u_0|| + 3 \sqrt{2} \sqrt{\E(u_0) - \alpha } \sqrt{T}.
\end{equation}
\end{cor}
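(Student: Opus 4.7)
The plan is to derive this immediately from Proposition \ref{prop:bddc1/2} by applying the Hölder bound with one time argument fixed at zero. From the definition \eqref{eq:lininter} one sees directly that $u^\tau(0) = u_{0\tau} = u_0$ for every $\tau \in (0,1)$, since the interpolation formula at $t = 0$ picks out the $k=1$ summand, which equals $u_{0\tau}$, and Algorithm \ref{algo:minimov} initializes $u_{0\tau} := u_0$. This identification of the initial value of the interpolant with the given datum is the bridge between the uniform Hölder bound and the pointwise bound that we want.

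Next, I would simply substitute $s = 0$ in \eqref{eq:hoelder} of Proposition \ref{prop:bddc1/2} to obtain
\begin{equation*}
\|u^\tau(t) - u_0\| = \|u^\tau(t) - u^\tau(0)\| \leq 3\sqrt{2}(\mathcal{E}(u_0) - \alpha)^{1/2}\sqrt{t},
\end{equation*}
for all $\tau \in (0,1)$ and $t \geq 0$. The triangle inequality then yields
\begin{equation*}
\|u^\tau(t)\| \leq \|u_0\| + \|u^\tau(t) - u_0\| \leq \|u_0\| + 3\sqrt{2}\sqrt{\mathcal{E}(u_0) - \alpha}\sqrt{t}.
\end{equation*}
Taking the supremum over $t \in [0,T]$ and $\tau \in (0,1)$, and using that $\sqrt{t} \leq \sqrt{T}$, gives the claimed estimate.

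There is no real obstacle here; this is a purely formal consequence of Proposition \ref{prop:bddc1/2}, and I expect the proof to occupy only a few lines in the paper. One small technical point to verify cleanly is that the statement of the corollary references $\tau > 0$ whereas Proposition \ref{prop:bddc1/2} is formulated for $\tau \in (0,1)$; however, since $\tau$ is a discretization parameter intended to be sent to zero and the supremum is taken accordingly, the two formulations are compatible in context, and one should just remark or silently restrict to $\tau \in (0,1)$.
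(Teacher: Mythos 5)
Your proof is correct and follows exactly the paper's argument: apply Proposition \ref{prop:bddc1/2} with $s=0$ (noting $u^\tau(0)=u_0$) and conclude by the triangle inequality. The remark on the $\tau>0$ versus $\tau\in(0,1)$ range is a reasonable minor clarification and does not affect the argument.
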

\begin{proof}
Immediate by the triangle inequality and Proposition \ref{prop:bddc1/2} with $s = 0$. 
\end{proof}

\begin{prop}[Rate of Energy Descent]\label{prop:energdec} 
Let $(u_{k \tau})_{k \in \mathbb{N}, \tau \in (0,1)}$  be a family of sequences generated by Algorithm \ref{algo:minimov}.
Then for each $T> 0$, there exists a constant $C_T= C_T(\E, u_0 , T) > 0 $ such that  for all $\tau \in ( 0 , 1) $ and all $ k \in \mathbb{N}$ with $(k+1) \tau \leq T $ 
\begin{equation*}
\frac{\E(u_{k \tau}) - \E(u_{(k+1) \tau}) }{\tau} \leq C_T.
\end{equation*}
\end{prop}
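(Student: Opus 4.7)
My plan is to exploit the Euler--Lagrange inequality \eqref{eq:eulerlag} from Proposition 3.1 combined with the uniform growth bound from Corollary 3.9 (Corollary \ref{cor:growthest}) and the local boundedness of $\nabla \E$ from Assumption \ref{ass:main}.

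First, I observe that by Corollary \ref{cor:growthest} applied at $t = (k+1)\tau \leq T$, both $u_{k\tau}$ and $u_{(k+1)\tau}$ lie in the ball $B_{R_T}(0) \subset H$ with $R_T := \|u_0\| + 3\sqrt{2}\sqrt{\E(u_0) - \alpha}\sqrt{T}$; in particular, $R_T$ depends only on $\E$, $u_0$ and $T$. Since $B_{R_T}(0)$ is convex, every convex combination of these two points also lies in $B_{R_T}(0)$. By Assumption \ref{ass:main} we may therefore define
\[
M_T := \sup_{w \in B_{R_T}(0)} \|\nabla_H \E(w)\| < \infty.
\]

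Next, I would apply the Euler--Lagrange inequality \eqref{eq:eulerlag}, which $u_{(k+1)\tau}$ satisfies as a minimizer of $\Phi_{u_{k\tau},\tau}$, with the admissible test function $u = u_{k\tau} \in C$. Writing $w = u_{(k+1)\tau}$, $v = u_{k\tau}$ this gives
\[
\tfrac{1}{\tau}\|u_{(k+1)\tau}-u_{k\tau}\|^2 \leq (\nabla_H\E(u_{(k+1)\tau}),\, u_{k\tau}-u_{(k+1)\tau}) \leq M_T\,\|u_{(k+1)\tau}-u_{k\tau}\|,
\]
so the one-step displacement satisfies the linear-in-$\tau$ bound $\|u_{(k+1)\tau}-u_{k\tau}\| \leq M_T\,\tau$.

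Finally, to bound the energy drop, I would use the same fundamental-theorem-of-calculus representation that appears in the proof of Proposition \ref{prop:enrggdisp}: since $\E \in C^{1,1}_{loc}$ and the segment from $u_{(k+1)\tau}$ to $u_{k\tau}$ stays in $B_{R_T}(0)$,
\[
\E(u_{k\tau})-\E(u_{(k+1)\tau}) = \int_0^1 \bigl(\nabla_H\E(r u_{k\tau} + (1-r)u_{(k+1)\tau}),\, u_{k\tau}-u_{(k+1)\tau}\bigr)\,\mathrm{d}r,
\]
which is bounded by $M_T\|u_{k\tau}-u_{(k+1)\tau}\| \leq M_T^2\,\tau$. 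Dividing by $\tau$ gives the claim with $C_T := M_T^2$. I do not foresee a real obstacle here; the only thing to be careful about is making sure the uniform-in-$\tau$ bound $R_T$ is truly $\tau$-independent, which is exactly the content of Corollary \ref{cor:growthest}.
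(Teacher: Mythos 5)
Your proof is correct, and it closes the key estimate by a different mechanism than the paper. Both arguments share the skeleton: write $\E(u_{k\tau})-\E(u_{(k+1)\tau})$ via the fundamental theorem of calculus along the segment, note that by \eqref{eq:unifest} the segment stays in $B_{R_T}(0)$, and bound the gradient there by $M_T:=\sup_{w\in B_{R_T}(0)}\|\nabla_H\E(w)\|$ using Assumption \ref{ass:main}. Where you differ is in controlling the one-step displacement: the paper uses the discrete energy descent \eqref{eq:energdec}, i.e.\ $\|u_{k\tau}-u_{(k+1)\tau}\|\le\sqrt{2\tau}\sqrt{\E(u_{k\tau})-\E(u_{(k+1)\tau})}$, which turns the estimate into the self-improving inequality $X\le\sqrt{2}\,M_T\sqrt{X}$ for $X=\tau^{-1}(\E(u_{k\tau})-\E(u_{(k+1)\tau}))$ and yields $C_T=2M_T^2$ after absorption; you instead test the Euler--Lagrange inequality \eqref{eq:eulerlag} of the minimizer $u_{(k+1)\tau}$ with the admissible competitor $u_{k\tau}$, obtaining directly $\|u_{(k+1)\tau}-u_{k\tau}\|\le M_T\,\tau$ and hence $C_T=M_T^2$. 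Your route uses the first-order optimality condition rather than the plain comparison $\Phi(u_{(k+1)\tau})\le\Phi(u_{k\tau})$, gives a marginally better constant, and as a by-product establishes the linear-in-$\tau$ step-size bound $\|u_{(k+1)\tau}-u_{k\tau}\|\le M_T\tau$, which is a discrete analogue of Corollary \ref{cor:boundder} and is of independent interest; the paper's route is slightly more economical in that it only invokes the minimality comparison already recorded in Remark \ref{rem:energdec}. All ingredients you invoke are available at this point of the paper (Proposition \ref{prop:standard} for \eqref{eq:eulerlag}, Corollary \ref{cor:growthest} for the $\tau$-independent radius $R_T$, and the convexity of the ball for the segment), so there is no gap.
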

\begin{proof}
Define 
$
R_T := ||u_0|| + 3\sqrt{2} \sqrt{\E(u_0) - \alpha} \sqrt{T}
$
and compute using \eqref{eq:unifest}
\begin{align*}
\frac{\E(u_{k \tau}) - \E(u_{(k+1) \tau}) }{\tau} &  = \int_0^1 \left(\nabla \E(s u_{k \tau} + (1-s)u_{(k+1) \tau})  , \frac{u_{k \tau} - u_{(k+1) \tau}}{\tau } \right)  \; \mathrm{d}s \\   &  \leq \sup_{w \in B_{R_T}(0)}  ||\nabla \E(w)||  \frac{||u_{k \tau } - u_{(k+1)\tau}||}{\tau } \\ &  \leq \sqrt{2} \sup_{w \in B_{R_T}(0)}  ||\nabla \E(w)|| \sqrt{\frac{\E(u_{k \tau}) - \E(u_{(k+1) \tau}) }{\tau}}. 
\end{align*}
The claim follows choosing $C_T := 2 \sup_{w \in B_{R_T}(0)} || \nabla \E(w)||^2$.
\end{proof}
\begin{prop}[Limit Trajectories]\label{prop:limitra}
Let $u_0$ be a preconditioned initial value. Then there exists a sequence $(\tau_n)_{n \in \mathbb{N}}$ converging to zero and $\phi: [0,\infty) \rightarrow \mathbb{R}$ nonincreasing as well as  $u \in C^\frac{1}{2}([0,\infty),H)$ such that $\E_{\tau_n} \rightarrow \phi $ pointwise and for each $T>0$ : $u^{\tau_n} \rightarrow u $ in $C([0,T],H)$.
\end{prop}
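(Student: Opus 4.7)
The plan is to combine three ingredients: a Helly-type selection for the monotone energy interpolations, an Arzelà–Ascoli argument for the linear interpolations, and a diagonal extraction exhausting $[0,\infty)$ by compact intervals.

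First, I would construct $\phi$. By Remark \ref{rem:energdec}, for each fixed $\tau$ the function $\E_\tau$ is nonincreasing on $[0,\infty)$, and by Assumption \ref{ass:main} it takes values in $[\alpha,\E(u_0)]$. Thus the family $\{\E_\tau\}_{\tau\in(0,1)}$ is a uniformly bounded sequence of monotone functions, and Helly's selection theorem yields a sequence $\sigma_n\to 0^+$ and a nonincreasing function $\phi:[0,\infty)\to\mathbb{R}$ with $\E_{\sigma_n}(t)\to\phi(t)$ for every $t\geq 0$.

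Next, I would construct $u$ by Arzelà–Ascoli on each interval $[0,T]$. Fix $T>0$. Proposition \ref{prop:bddc1/2} shows that $\{u^\tau\mid_{[0,T]}\}_{\tau\in(0,1)}$ is equicontinuous (in fact equi-Hölder-$1/2$) as a family in $C([0,T],H)$. For pointwise precompactness, observe that on each interval $[(k-1)\tau,k\tau)\subset[0,T]$ the value $u^\tau(t)$ is a convex combination of $u_{(k-1)\tau}$ and $u_{k\tau}$, and both are elements of the set
\begin{equation*}
K_T:=\{u_{k\tau}\mid k\in\mathbb{N},\ \tau\in(0,1),\ k\tau\leq T+1\},
\end{equation*}
which is precompact in $H$ by Definition \ref{def:precon}. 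Since the continuous image $\{\lambda x+(1-\lambda)y:\lambda\in[0,1],\ x,y\in\overline{K_T}\}$ of the compact set $[0,1]\times\overline{K_T}\times\overline{K_T}$ is compact, every value $u^\tau(t)$ lies in a single compact subset of $H$. Arzelà–Ascoli then gives a subsequence of $\{u^\tau\}$ converging in $C([0,T],H)$ to some $u_T\in C^{1/2}([0,T],H)$.

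Finally, I would run a diagonal extraction. Starting from the sequence $\sigma_n$ produced in Step 1, I would apply the $[0,T]$-argument of Step 2 with $T=1$ to extract a subsequence, then with $T=2$ to extract a sub-subsequence, and so on; the diagonal sequence $\tau_n$ satisfies $u^{\tau_n}\to u$ in $C([0,T],H)$ for every $T>0$, where $u$ is the common extension of the limits $u_T$. Since $\tau_n$ is a subsequence of $\sigma_n$, the convergence $\E_{\tau_n}(t)\to\phi(t)$ is preserved for all $t\geq 0$. The limit $u$ inherits the uniform Hölder-$1/2$ estimate of Proposition \ref{prop:bddc1/2}, so $u\in C^{1/2}([0,\infty),H)$.

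The one nontrivial point is the compactness input: while equicontinuity comes for free from Proposition \ref{prop:bddc1/2}, pointwise precompactness of $\{u^\tau(t)\}$ requires the preconditioning hypothesis on $u_0$, and one must note that the precompactness of the discrete values $u_{k\tau}$ transfers to all convex combinations — this is exactly where Definition \ref{def:precon} enters, and without it the Arzelà–Ascoli step fails.
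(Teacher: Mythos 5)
Your proposal is correct and follows essentially the same route as the paper: Helly's selection theorem for the monotone energy interpolations, the Arzelà--Ascoli theorem on $[0,T]$ using the equi-Hölder bound of Proposition \ref{prop:bddc1/2} together with the precompactness supplied by Definition \ref{def:precon}, and a diagonal extraction over $T\in\mathbb{N}$. The only differences are cosmetic: you apply Helly once globally before the Arzelà--Ascoli step (the paper interleaves both extractions in the same diagonal argument), and you justify pointwise precompactness via the compactness of the set of convex combinations of $\overline{K_T}$, whereas the paper extracts subsequences so that each summand converges — both are valid.
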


\begin{proof}
First fix $T \in \mathbb{N}$. 
Since $\E_{\tau_n}$ is nonincreasing for each $n  \in \mathbb{N}$ we can conclude from Helly's Theorem, see \cite[Lemma 3.3.3]{Ambrosio}, that there exists a sequence $\tau_n \rightarrow 0$ and $\phi_T :[0, T] \rightarrow \mathbb{R}$ nonincreasing such that $\E_{\tau_n} \rightarrow \phi_T$ pointwise on $[0,T]$. We show the convergence of a subsequence of $u^{\tau_n}$ in $C([0,T],H)$ using an appropriate version of the Arzelá-Ascoli Theorem, namely \cite[4.12, "Warning" on p.106]{Alt}. For this, note that boundedness in $C^\frac{1}{2}([0,T],H)$ implies uniform equicontinuity. It remains to show that for each $t \in [0,T]$ the set  $\{u^{\tau_n}(t)| n \in \mathbb{N} \} $ is precompact. This however is immediate if we write 
\begin{equation*}
u^{\tau_n}(t) = \left(  \left( 1 -   \frac{t- (k_n-1) \tau_n}{\tau_n} \right) u_{(k_n-1) \tau_n} + \frac{t- (k_n-1) \tau_n}{\tau_n} u_{k_n\tau_n} \right)    
\end{equation*}
where $k_n=k_n(t)$ is chosen such that $(k_n-1)\tau_n \leq t < k_n\tau_n $. Since $0 \leq \frac{t- (k_n-1) \tau_n}{\tau_n} \leq 1 $ and  $u_{k_n \tau_n}, u_{(k_n-1) \tau_n}$ lie in a precompact set (as $u_0 \in \mathcal{P}$), one can extract a subsequence such that every summand converges and this gives rise to a convergent subsequence of $u^{\tau_n}(t)$. Therefore the Arzela-Ascoli Theorem applies and we obtain a further subsequence of $(\tau_n)$ which we do not relabel, such that $u^{\tau_n} \rightarrow u_T$ uniformly in $[0,T]$ for some $u_T \in C([0,T],H)$. It remains to show that $u_T \in C^\frac{1}{2}([0,T],H),$ but this follows passing to the limit in \eqref{eq:hoelder}. Since $T \in \mathbb{N}$ was arbitrary we can - using a standard diagonal sequence argument - find a subsequence (again called) $\tau_n $ such that for each $T \in \mathbb{N}$, $(u^{\tau_n})_{n \in \mathbb{N}} $ is uniformly convergent to some $u_T$  and $(\E_{\tau_n})_{n \in \mathbb{N}}$ converges pointwise to a nonincreasing function $\phi_T$ on $[0,T]$. Since pointwise limits are unique, we get $(u_{T+1})_{\mid_{[0,T]}} = u_T$ and $(\phi_{T+1})_{\mid_{[0,T]}} = \phi_T$. Therefore setting $u(t) := u_T(t)$ if $t \leq T $ and $\phi(t) := \phi_T(t)$  if $t \leq T$ is well-defined and gives the desired functions. 
\end{proof}
\begin{remark}
Note that $\phi$ is a bounded function since for each $t \geq 0 $ one has $\alpha \leq \E_{\tau}(t) \leq \E(u_0)$ and the inequality carries over to the pointwise limit $\phi$. Moreover, \eqref{eq:unifest} carries over to the limit, so 
\begin{equation*}
 \sup_{t \in [0,T]} ||u(t)|| \leq ||u_0|| + 3 \sqrt{2} \sqrt{\E(u_0) - \alpha} \sqrt{T} \quad \forall T > 0 . 
\end{equation*}
\end{remark}
\begin{cor}[Limit of Constant Interpolations]\label{cor:const}

Let $u \in C^\frac{1}{2}([0,\infty), H)$ be a limit trajectory and $(\tau_n)_{n \in \mathbb{N}}$ be chosen as in Proposition \ref{prop:limitra}. Then the constant interpolations $(\overline{u}^{\tau_n})_{n \in \mathbb{N}}$ also converge to $u$ pointwise in $[0,\infty)$ and uniformly on $[0,T]$ for each $T> 0$. 
\end{cor}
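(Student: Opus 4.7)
The plan is to compare the constant interpolation $\overline{u}^{\tau_n}$ with the linear interpolation $u^{\tau_n}$ pointwise and show that their difference tends to zero uniformly in $t$ as $\tau_n \to 0$, after which the corollary follows from the triangle inequality together with Proposition \ref{prop:limitra}.

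First I would fix $\tau \in (0,1)$ and $t \geq 0$, and let $k = k(t) \in \mathbb{N}$ be the unique index with $(k-1)\tau \leq t < k\tau$. Directly from the definitions one has
\begin{equation*}
u^\tau(t) - \overline{u}^\tau(t) = \frac{t - (k-1)\tau}{\tau}\left(u_{k\tau} - u_{(k-1)\tau}\right),
\end{equation*}
so $\|u^\tau(t) - \overline{u}^\tau(t)\| \leq \|u_{k\tau} - u_{(k-1)\tau}\|$. Invoking the discrete energy descent estimate \eqref{eq:energdec} from Remark \ref{rem:energdec} together with the fact that $\E$ is bounded below by $\alpha$ and nonincreasing along the minimizing movement sequence, I obtain the key bound
\begin{equation*}
\|u^\tau(t) - \overline{u}^\tau(t)\| \leq \sqrt{2\tau}\sqrt{\E(u_{(k-1)\tau}) - \E(u_{k\tau})} \leq \sqrt{2\tau}\sqrt{\E(u_0) - \alpha},
\end{equation*}
which is uniform in $t \geq 0$ and tends to zero as $\tau \to 0$.

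To conclude, fix $T > 0$ and apply the triangle inequality for any $t \in [0,T]$:
\begin{equation*}
\|\overline{u}^{\tau_n}(t) - u(t)\| \leq \|\overline{u}^{\tau_n}(t) - u^{\tau_n}(t)\| + \|u^{\tau_n}(t) - u(t)\|.
\end{equation*}
Taking the supremum over $t \in [0,T]$, the first summand is bounded by $\sqrt{2\tau_n}\sqrt{\E(u_0) - \alpha} \to 0$ by the estimate above, while the second summand converges to zero by Proposition \ref{prop:limitra}. This yields uniform convergence on $[0,T]$, and since $T > 0$ was arbitrary, pointwise convergence on $[0,\infty)$ follows as well.

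No step is really an obstacle here: the whole point is that the two interpolations agree on the grid points $k\tau$ and differ on each interval of length $\tau$ by at most the discrete jump $\|u_{k\tau} - u_{(k-1)\tau}\|$, which the $\ell^2$-type energy estimate \eqref{eq:energdec} forces to be $O(\sqrt{\tau})$ uniformly. The corollary is essentially a corollary of that single observation combined with the already-established convergence of $u^{\tau_n}$.
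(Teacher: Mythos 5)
Your proof is correct and follows essentially the same route as the paper: bound $\|\overline{u}^{\tau_n}(t)-u^{\tau_n}(t)\|$ by the discrete jump $\|u_{k\tau_n}-u_{(k-1)\tau_n}\|$, control that by $\sqrt{2\tau_n}\sqrt{\E(u_0)-\alpha}$ via the discrete energy descent \eqref{eq:energdec}, and conclude with the triangle inequality and the uniform convergence of $u^{\tau_n}$ from Proposition \ref{prop:limitra}. The only cosmetic difference is that the paper works on $[0,T-1]$ for $T>1$ as a precaution, which your direct supremum over $[0,T]$ renders unnecessary since your bound is uniform in $t$.
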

\begin{proof}
Fix $T > 1$. Assume without loss of generality that $\tau_n \leq 1$ for each $n \in \mathbb{N}$. For $t \in [0,T-1] $, let $k_n(t)$ be chosen such that $(k_n(t)- 1)\tau_n \leq t \leq k_n(t)\tau_n$.  Then by \eqref{eq:lininter} and \eqref{eq:energdec} 
\begin{align*}
||\overline{u}^{\tau_n}(t) - u(t) ||  & \leq || \overline{u}^{\tau_n}(t) - u^{\tau_n}(t) || + ||u^{\tau_n}(t) - u(t) || 
\\ & \leq \frac{t- (k_n(t)-1) \tau_n}{\tau_n}  ||u_{(k_n(t) - 1) \tau_n} - u_{k_n(t) \tau_n} || + ||u^{\tau_n} - u||_{C([0,T], H) }
\\ & \leq ||u_{(k_n(t) - 1) \tau_n} - u_{k_n(t) \tau_n} || + ||u^{\tau_n} - u||_{C([0,T], H) } 
\\ & \leq \sqrt{2 \tau_n} \sqrt{\mathcal{E}(u_0) - \alpha } +  ||u^{\tau_n} - u||_{C([0,T], H) }  \rightarrow 0 \quad (n \rightarrow \infty) .
\end{align*}
Therefore $|| \overline{u}^{\tau_n} - u||_{C([0,T-1],H)} \rightarrow 0$. Since $T>1$ was arbitrary, the claim follows. 
\end{proof}

\subsection{Regularity of the limit trajectories}

So far, we have shown that the minizing movement schemes approximate a limit trajectory as the stepwidth becomes small. However, it is unclear whether this limit is an Obstacle Gradient Flow. Here we examine the regularity of the limit trajectory and discover that it meets the regularity requirements of the flow. The argument will use difference quotient techniques, which are also available for vector-valued functions. For a detailed overview about these methods see \cite{Marcel}. 

\begin{lemma}[Difference Quotients of BV-Functions] \label{lem:bdddiffquot} 

Let $d\geq 1$ and  $\Omega, \widetilde{\Omega} \subset  \mathbb{R}^d$ be open and bounded, $\Omega \subset \subset \widetilde{\Omega}$. Then there exists a constant $C=C(\Omega, \widetilde{\Omega} ) > 0$ such that for each $ h < \frac{1}{4} \mathrm{dist}(\Omega, \widetilde{\Omega}^C) $ and $f \in BV(\widetilde{\Omega}) $ 
\begin{equation*}
||D^h f||_{L^1(\Omega)} \leq C |[Df]|(\widetilde{\Omega}) ,
\end{equation*}
where $D^h_if := \frac{f(x+he_i) - f(x)}{h}$ for $i = 1,...,d$  and $|[Df]|$ denotes the total variation measure. 
\end{lemma}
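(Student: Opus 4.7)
The plan is to reduce the claim to the smooth case via the standard strict approximation theorem for BV functions, and then prove the smooth-case estimate by a direct Fubini argument based on the fundamental theorem of calculus.

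First I would deal with the smooth case. Suppose first that $f \in C^1(\widetilde{\Omega})$. For $|h| < \tfrac{1}{4}\dist(\Omega,\widetilde{\Omega}^C)$ one can write
\begin{equation*}
D^h_i f(x) = \frac{1}{h}\int_0^h \partial_i f(x + s e_i)\, \mathrm{d}s,
\end{equation*}
so an application of Fubini's theorem gives
\begin{equation*}
\|D^h_i f\|_{L^1(\Omega)} \leq \frac{1}{h}\int_0^h \int_\Omega |\partial_i f(x+se_i)|\, \mathrm{d}x\, \mathrm{d}s \leq \int_{\Omega_h} |\partial_i f(y)|\, \mathrm{d}y \leq \int_{\widetilde{\Omega}} |\nabla f|\, \mathrm{d}y,
\end{equation*}
where $\Omega_h := \bigcup_{|s| \leq h}(\Omega + s e_i) \subset \widetilde{\Omega}$ by the choice of $h$. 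Summing over $i = 1, \dots, d$ gives an inequality with constant $C = d$ (or $C=1$ if only a single component is considered).

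Next I would extend the estimate to general $f \in BV(\widetilde{\Omega})$ by approximation. By the classical strict approximation result for BV functions (Ambrosio--Fusco--Pallara, Theorem 3.9, or Evans--Gariepy), there exists a sequence $f_k \in C^\infty(\widetilde{\Omega}) \cap BV(\widetilde{\Omega})$ with $f_k \to f$ in $L^1(\widetilde{\Omega})$ and $\int_{\widetilde{\Omega}} |\nabla f_k|\, \mathrm{d}x \to |[Df]|(\widetilde{\Omega})$. For fixed $h$, $L^1$-convergence of $f_k$ clearly implies $D^h f_k \to D^h f$ in $L^1(\Omega)$, since
\begin{equation*}
\|D^h f_k - D^h f\|_{L^1(\Omega)} \leq \tfrac{1}{h}\bigl( \|f_k - f\|_{L^1(\Omega + he_i)} + \|f_k - f\|_{L^1(\Omega)} \bigr) \to 0.
\end{equation*}
Applying the smooth-case estimate to each $f_k$ and passing to the limit then yields
\begin{equation*}
\|D^h f\|_{L^1(\Omega)} = \lim_{k \to \infty} \|D^h f_k\|_{L^1(\Omega)} \leq C \limsup_{k \to \infty} \int_{\widetilde{\Omega}} |\nabla f_k|\, \mathrm{d}x = C\, |[Df]|(\widetilde{\Omega}).
\end{equation*}

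The argument is essentially routine; the only subtle point is the matching between the open set in the total variation (one wants $|[Df]|(\widetilde{\Omega})$ and not an enlargement) and the domain inside which the translate $x \mapsto x+he_i$ remains admissible. This is precisely why the hypothesis on $h$ is sharpened to $h<\tfrac{1}{4}\dist(\Omega,\widetilde{\Omega}^C)$: it ensures that $\Omega_h \subset\subset \widetilde{\Omega}$ with a safety margin, so the Fubini computation above never leaves $\widetilde{\Omega}$ and no surface term appears when passing to the BV limit. There is no deep obstacle here; the lemma is included for later use in a difference-quotient regularity argument for the flow.
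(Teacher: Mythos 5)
Your proof is correct, but it reaches the estimate by a somewhat different route than the paper. Both arguments reduce to smooth functions, but the mechanisms differ: the paper mollifies $f$ by hand, applies the known $W^{1,p}$ difference-quotient estimate (Evans, Theorem 5.8.3(i)) to $f*\phi_\epsilon$ on an intermediate set $U$ between $\Omega$ and $\widetilde{\Omega}$, and then needs an extra duality/Fubini computation (writing $\int_U |D(f*\phi_\epsilon)|$ as a supremum over test vector fields and shifting the mollifier onto the test function) to bound the gradient mass of the mollification by $|[Df]|(\widetilde{\Omega})$; this is where its constant $C(\Omega,U)$ comes from. You instead quote the strict smooth-approximation theorem in $BV$ (where convergence of the total variations is part of the statement, so no duality step is needed) and prove the smooth-case difference-quotient bound from scratch via the fundamental theorem of calculus and Fubini, using the margin $|h|<\tfrac14\mathrm{dist}(\Omega,\widetilde{\Omega}^C)$ exactly as the paper does to keep the translated segments inside $\widetilde{\Omega}$. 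Your version is more self-contained on the difference-quotient side and yields an explicit constant (in fact $C=1$ per coordinate direction, independent of $\Omega,\widetilde{\Omega}$), at the price of invoking the $BV$ approximation theorem as a black box; since the paper only applies the lemma with $d=1$ to a monotone function $\phi$, either constant is more than sufficient.
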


\begin{proof}
For $\epsilon \in (0, \tfrac{1}{2}\mathrm{dist}(\Omega, \widetilde{\Omega}^C ))$ let  $\phi_\epsilon$ be the standard mollifier supported in $\overline{B_\epsilon(0)}$. Set 
\begin{equation*}
g_i(x) := \frac{f \chi_{\widetilde{\Omega}}(x+ h e_i) - f \chi_{\widetilde{\Omega}}(x) }{h }, \quad i = 1,...,d \quad x \in \mathbb{R}^d.
\end{equation*}
Observe that $g* \phi_\epsilon \rightarrow g $ in $L^1(\mathbb{R}^n)$.  Since $g \equiv D^h f$ on $\Omega$ for $h < \frac{1}{4} \mathrm{dist}(\Omega, \widetilde{\Omega})^C$ we find that 
that $D^h f * \phi_\epsilon \rightarrow D^hf $ in $L^1(\Omega)$ as $\epsilon \rightarrow 0+$. Define $U := \{x \in \widetilde{\Omega} : \mathrm{dist}(x, \widetilde{\Omega}^C) > \frac{1}{2} \mathrm{dist}(\Omega, \widetilde{\Omega}^C) \} $ and observe that using the constant $C(\Omega, U)$ from \cite[Theorem 5.8.3 (i)]{Evans}, \cite[Example 1, Section 5.1]{EvansGariepy} and Fubini's Theorem
\begin{align*}
||D^h f ||_{L^1(\Omega)} &  = \lim_{\epsilon \rightarrow 0 }  ||D^hf * \phi_\epsilon||_{L^1(\Omega)} = \lim_{\epsilon \rightarrow 0 } || D^h(f*\phi_\epsilon)||_{L^1(\Omega)}
\\ & \leq  \liminf_{\epsilon \rightarrow 0 } C(\Omega, U) \int_U |D(f* \phi_\epsilon) | \; \mathrm{d}x
\\ &  = C(\Omega, U)\liminf_{\epsilon \rightarrow 0 } \sup_{ \phi \in C_0^1(U), ||\phi||_\infty \leq 1  } \int_U  f* \phi_\epsilon \divi (\phi) \; \mathrm{d}x 
\\ & = C(\Omega, U)  \liminf_{\epsilon \rightarrow 0 } \sup_{ \phi \in C_0^1(U),  ||\phi||_\infty \leq 1  } \int_U   \int_{\widetilde{\Omega}}  f(y) \phi_\epsilon(x-y) \divi (\phi(x))  \; \mathrm{d}y \; \mathrm{d}x
\\ & = C(\Omega, U)  \liminf_{\epsilon \rightarrow 0 } \sup_{ \phi \in C_0^1(U), ||\phi||_\infty \leq 1  }   \int_{\widetilde{\Omega}}  f(y)   \int_U \phi_\epsilon(x-y) \divi (\phi(x)) \; \mathrm{d}x  \; \mathrm{d}y
\\ & =C(\Omega, U) \liminf_{\epsilon \rightarrow 0 } \sup_{ \phi \in C_0^1(U) ,  ||\phi||_\infty \leq 1 }   \int_{\widetilde{\Omega}}  f(y)   \divi(\phi * \phi_\epsilon)   \; \mathrm{d}y\\ & \leq C(\Omega, U) |[Df]|(\widetilde{\Omega}), 
\end{align*}
since $\phi* \phi_\epsilon \in C_0^1(\widetilde{\Omega})$ for $\epsilon$ small enough. Here we used that $\phi_\epsilon * \mathrm{div}(\phi) = \mathrm{div}( \phi_\epsilon * \phi).$ on $U$.
\end{proof}
\begin{prop}[$L^2$-Bounds for the Difference Quotients] \label{prop:bdondif}
Let $u \in C^\frac{1}{2}([0,\infty), H) $ be as in Proposition \ref{prop:limitra}. Then for each $T > 0 $ there is $C =C(T) > 0 $ such that for $|h| < \frac{1}{4}$
\begin{equation}
||D^hu||^2_{L^2((0,T),H)} \leq C (\mathcal{E}(u_0) - \alpha).
\end{equation}
\end{prop}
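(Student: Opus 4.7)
The plan is to pass to the limit from the piecewise linear interpolations $u^{\tau_n}$, whose difference quotients are computable explicitly and for which the discrete energy dissipation estimate \eqref{eq:energdec} applies directly. Fix $T > 0$ and $|h| < \tfrac14$; by Proposition \ref{prop:limitra} the convergence $u^{\tau_n} \to u$ is uniform on $[0,\, T+1] \supset [0,\, T+|h|]$, so for each $t \in [0, T]$ one has $D^h u^{\tau_n}(t) \to D^h u(t)$ in $H$, and in particular $\|D^h u^{\tau_n}(t)\|^2 \to \|D^h u(t)\|^2$ pointwise.

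At the discrete level, $u^{\tau_n}$ is piecewise affine with
$$\dot{u}^{\tau_n}(s) = \frac{u_{k \tau_n} - u_{(k-1)\tau_n}}{\tau_n} \quad \text{on } ((k-1)\tau_n,\, k\tau_n),$$
so $u^{\tau_n}(t+h) - u^{\tau_n}(t) = \int_t^{t+h} \dot{u}^{\tau_n}(s)\, \mathrm{d}s$ (the case $h < 0$ being analogous). Cauchy--Schwarz, division by $h^2$, integration over $(0,T)$, and an application of Fubini yield
$$\int_0^T ||D^h u^{\tau_n}(t)||^2\, \mathrm{d}t \leq \frac{1}{h} \int_0^T \int_t^{t+h} ||\dot{u}^{\tau_n}(s)||^2\, \mathrm{d}s\, \mathrm{d}t \leq \int_0^{T+h} ||\dot{u}^{\tau_n}(s)||^2\, \mathrm{d}s.$$

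Remark \ref{rem:energdec} then controls the right-hand side: on each sub-interval
$$\int_{(k-1)\tau_n}^{k\tau_n} ||\dot{u}^{\tau_n}||^2\, \mathrm{d}s \;=\; \frac{||u_{k\tau_n} - u_{(k-1)\tau_n}||^2}{\tau_n} \;\leq\; 2\bigl(\E(u_{(k-1)\tau_n}) - \E(u_{k\tau_n})\bigr),$$
so summing over the intervals intersecting $[0, T+h]$ telescopes to a bound of $2(\E(u_0) - \alpha)$, uniform in $n$ and in $|h| < \tfrac14$. Fatou's lemma applied to the pointwise limit then delivers
$$\int_0^T ||D^h u(t)||^2\, \mathrm{d}t \;\leq\; \liminf_{n \to \infty} \int_0^T ||D^h u^{\tau_n}(t)||^2\, \mathrm{d}t \;\leq\; 2(\E(u_0) - \alpha),$$
which is the claimed bound, with $C(T) = 2$ (in fact independent of $T$).

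The argument essentially repeats the difference-quotient computation from the first half of the proof of Proposition \ref{prop:limitflow}, now applied to the interpolations themselves rather than to a sequence of genuine flows, so I do not anticipate any serious obstacle. The only delicate point is that $u^{\tau_n}(t+h)$ must be controlled for $t$ up to $T$, i.e.\ on the slightly larger interval $[0, T+|h|]$; this is automatic because $|h| < \tfrac14$ and Proposition \ref{prop:limitra} provides uniform convergence on every bounded interval.
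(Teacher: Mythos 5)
Your proof is correct, but it follows a genuinely different route from the paper's. You work with the piecewise linear interpolations $u^{\tau_n}$, write $u^{\tau_n}(t+h)-u^{\tau_n}(t)=\int_t^{t+h}\dot u^{\tau_n}(s)\,\mathrm{d}s$, and bound $\int_0^{T+h}\|\dot u^{\tau_n}(s)\|^2\,\mathrm{d}s$ by the telescoping discrete dissipation \eqref{eq:energdec}; Fatou (with the uniform convergence from Proposition \ref{prop:limitra} on $[0,T+1]$) then transfers the bound, uniform in $n$ and $h$, to $u$. This mirrors the computation in the first half of the proof of Proposition \ref{prop:limitflow}, with the discrete energy inequality replacing \eqref{eq:abl}, and every step (the affine structure of $u^{\tau_n}$, Cauchy--Schwarz, Fubini, the telescoping sum) checks out. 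The paper instead works with the constant interpolations $\overline{u}^{\tau_n}$: it estimates $\|\overline{u}^{\tau_n}(t+h)-\overline{u}^{\tau_n}(t)\|^2/h^2$ by the energy drop $\E_{\tau_n}(t)-\E_{\tau_n}(t+h)$, passes to the limit to control $\int_0^T\|D^h u\|^2$ by the difference quotient of the monotone limit $\phi$ produced by Helly's theorem, and then invokes the BV difference-quotient Lemma \ref{lem:bdddiffquot} to bound $\|D^h\phi\|_{L^1(0,T)}$ uniformly in $h$ by the total variation $\E(u_0)-\alpha$. Your argument avoids both $\phi$ and Lemma \ref{lem:bdddiffquot}, is more elementary, and gives an explicit constant ($C=2$, independent of $T$), whereas the paper's constant comes from the BV lemma and is stated as $T$-dependent; what the paper's detour buys is mainly that the estimate is routed through the energy profile $\phi$, which it has on hand anyway (and uses again in Lemma \ref{lem:fvi}), so your shortcut is a legitimate simplification. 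The one caveat, shared with the paper (which also only treats $0<h<\tfrac14$ and declares $h<0$ "very similar"), is that for negative $h$ the quotient $D^hu(t)$ is only meaningful for $t>|h|$, which reduces to the positive-$h$ case by a change of variables and does not affect the application in Corollary \ref{cor:regu}.
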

\begin{proof} We only show the claim for $0< h <\frac{1}{4}$. The case $h < 0 $ is very similar. Let $(\tau_n)_{n\in \mathbb{N}}$ be chosen as in Proposition \ref{prop:limitra}. For $s \in [0,\infty)$ we define $k_n(s) $ to be the unique natural number such that $(k_n(s) - 1)\tau_n \leq s < k_n(s) \tau_n $. 
Using Fatou's Lemma, Remark \ref{rem:energdec} and the Cauchy-Schwarz inequality we find 
\begin{align*}
 &  \int_0^T \frac{||u(t+h) - u(t)||^2}{h^2} \; \mathrm{d}t  \leq  \liminf_{n \rightarrow \infty} \int_0^T \frac{||\overline{u}^{\tau_n}(t+h) - \overline{u}^{\tau_n} (t)||^2 }{h^2} 
\\ & \qquad  \leq \liminf_{n \rightarrow \infty} \int_0^T \frac{1}{h^2} \left( \sum_{ l = k_n(t) - 1}^{k_n(t+h)- 2} ||u_{(l+1)\tau_n} - u_{l \tau_n} || \right)^2 \; \mathrm{d}t
\\ & \qquad  \leq \liminf_{n \rightarrow \infty} \int_0^T \frac{1}{h^2}\left( \sum_{l= k_n(t)-1}^{k_n(t+h) - 2} \sqrt{2\tau_n} \sqrt{\E(u_{(l+ 1) \tau_n}) - \E(u_{l \tau_n}) } \right)^2 \; \mathrm{d}t  
\\ & \qquad \leq \liminf_{n \rightarrow \infty} \int_0^T \frac{2 \tau_n (  k_n(t+h) - k_n(t)) }{h^2} \left\vert \sum_{l= k_n(t)-1}^{k_n(t+h) - 2} {\E(u_{(l+ 1) \tau_n}) - \E(u_{l \tau_n}) } \right\vert \; \mathrm{d}t
\\ &  \qquad \leq \liminf_{n \rightarrow \infty} \int_0^T \frac{2 (h+\tau_n) }{h^2} ( \E(u_{(k_n(t)-1)\tau_n})-\E(u_{(k_n(t+h)-1)\tau_n}) ) \; \mathrm{d}t, 
\end{align*}
where we used in the last step that 
\begin{equation}\label{eq:discretjump}
\tau_n ( k_n(t+h) - k_n(t)) = (k_n(t+h)-1) \tau_n - (k_n(t)-1) \tau_n \leq t+ h - (t- \tau_n)  = h + \tau_n .
\end{equation}
The integrand converges pointwise to $\frac{\phi(t+h)- \phi(t)}{h} $ as $n \rightarrow \infty$, see Proposition \ref{prop:limitra}, and is dominated by $\frac{h+1 }{h^2} (\E(u_0)- \alpha)$. So Lebesgue's Theorem yields 
\begin{equation*}
\int_0^T \frac{||u(t+h) - u(t)||^2}{h^2}  \leq \int_0^T \frac{\phi(t)- \phi(t+h)}{h} \; \mathrm{d}t. 
\end{equation*}
Now $\phi$ can be extended to a nonincreasing function on $[-1,T+1]$ by a constant. As a nonincreasing and bounded function, the extended version is of bounded variation and $|[D\phi]|((-1, T+1)) = \phi(T+1) - \phi(-1)  \leq \mathcal{E}(u_0) - \alpha$, see \cite[Theorem 1 of Section 5.10]{EvansGariepy}. 
Using Lemma \ref{lem:bdddiffquot} with $\Omega = (0,T)$ and $\widetilde{\Omega} = (-1,T+1) $  we find, since $h < \frac{1}{4}$,
\begin{equation}
 ||D^hu ||_{L^2((0,T),H)} \leq ||D^h\phi||_{L^1(0,T)} \leq C|[D\phi]| ( -1, T+1)   \leq C ( \E(u_0) - \alpha) .
\end{equation}
\end{proof}
\begin{cor}[Sobolev Regularity in Time] \label{cor:regu} Let $u \in C^\frac{1}{2}([0,\infty), H) $ be as in Proposition \ref{prop:limitra} and $T> 0 $. Then $u \in W^{1,2}((0,T),H )$. In particular, $u \in W^{1,2}_{loc}((0, \infty), H)$. 
\end{cor}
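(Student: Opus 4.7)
The plan is to conclude directly from Proposition \ref{prop:bdondif} by invoking the standard difference-quotient characterization of Bochner-Sobolev spaces (the reference [Marcel, Theorem 2.2] already used in Proposition \ref{prop:limitflow}). That theorem says: if $u \in L^2((0,T),H)$ and the difference quotients $D^h u$ admit a uniform bound in $L^2((0,T),H)$ for all sufficiently small $|h|$, then $u \in W^{1,2}((0,T),H)$ and the weak derivative $\dot u$ inherits the same $L^2$ bound.

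First I would fix $T>0$ and notice that $u$, being $\tfrac{1}{2}$-Hölder continuous on $[0,\infty)$ by Proposition \ref{prop:limitra}, certainly belongs to $L^2((0,T),H)$ after restriction. Next I would apply Proposition \ref{prop:bdondif}, which supplies a constant $C=C(T)$ such that $\|D^h u\|_{L^2((0,T),H)}^2 \leq C(\mathcal{E}(u_0)-\alpha)$ for every $h$ with $|h|<\tfrac{1}{4}$. The bound is genuinely uniform in $h$, which is exactly the hypothesis one needs; the fact that the difference quotient is well-defined on $(0,T)$ for positive and negative increments uses only that $u$ is defined (and Hölder continuous) on all of $[0,\infty)$, so no issue arises near the endpoints.

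Having verified the hypotheses of [Marcel, Theorem 2.2], I would invoke it to conclude $u \in W^{1,2}((0,T),H)$. Since $T>0$ was arbitrary, this immediately yields $u \in W^{1,2}_{\mathrm{loc}}((0,\infty),H)$, completing the corollary. There is essentially no obstacle here beyond correctly citing the vector-valued version of the Nečas difference-quotient lemma; all the analytic work has been absorbed into Proposition \ref{prop:bdondif}, whose proof already combined the discrete energy dissipation estimate \eqref{eq:energdec} with the BV-difference-quotient bound of Lemma \ref{lem:bdddiffquot}.
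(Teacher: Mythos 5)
Your proposal is correct and coincides with the paper's argument: the corollary is obtained by combining the uniform $L^2$ difference-quotient bound of Proposition \ref{prop:bdondif} with the vector-valued difference-quotient characterization of $W^{1,2}((0,T),H)$ from the reference by Arendt and Kreuter (the paper cites Lemma 2.1 there rather than Theorem 2.2, but the content invoked is the same). No further comment is needed.
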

\begin{proof}
This follows immediately from \cite[Lemma 2.1]{Marcel} and Proposition \ref{prop:bdondif}.
\end{proof}

\subsection{Conclusion of the Existence Proof}

We have already shown regularity of the limit trajectory of the minimizing movement scheme. What is still missing is the $(FVI)$-property, which we will verify immediately using the variational inequalities for the discrete approximations.

\begin{lemma}[Flow Variational Inequality] \label{lem:fvi} Let $u$ be as in Proposition \ref{prop:limitra}. Then $u$ satisfies $(FVI)$, i.e. for almost every $t> 0 $,
\begin{equation*}
(\dot{u}(t), v- u(t)) + (\nabla \E (u(t)) , v- u(t) ) \geq 0 \quad \forall v \in C.
\end{equation*}
\end{lemma}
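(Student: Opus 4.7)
The plan is to derive the $(FVI)$ by passing to the limit in the Euler--Lagrange inequality \eqref{eq:eulerlag} satisfied at each step of Algorithm \ref{algo:minimov}. Fix $T>0$ and $v\in C$. For $t\in[(k-1)\tau,k\tau)$ the linear interpolation satisfies $\dot u^{\tau}(t)=\frac{u_{k\tau}-u_{(k-1)\tau}}{\tau}$, so if I introduce the "forward" constant interpolation $\underline{u}^{\tau}(t):=u_{k\tau}$ on $[(k-1)\tau,k\tau)$, then \eqref{eq:eulerlag} reads pointwise
\begin{equation*}
(\dot u^{\tau}(t),v-\underline{u}^{\tau}(t))+(\nabla\E(\underline{u}^{\tau}(t)),v-\underline{u}^{\tau}(t))\ge 0 \quad\text{a.e. } t\ge0.
\end{equation*}
Multiplying by an arbitrary nonnegative $\phi\in C_c^\infty((0,T))$ and integrating gives the corresponding integrated inequality, from which I want to pass to the limit along the sequence $\tau_n\to0$ from Proposition \ref{prop:limitra}.

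Three convergences are needed. First, $\underline{u}^{\tau_n}\to u$ uniformly on $[0,T]$: this follows from Corollary \ref{cor:const} combined with $\|\underline{u}^{\tau_n}(t)-\overline{u}^{\tau_n}(t)\|=\|u_{k\tau_n}-u_{(k-1)\tau_n}\|\le\sqrt{2\tau_n}\sqrt{\E(u_0)-\alpha}$ by Remark \ref{rem:energdec}. Second, since the $\underline{u}^{\tau_n}(t)$ lie in a uniformly bounded set (by Corollary \ref{cor:growthest}) and $\nabla\E$ is locally Lipschitz, the uniform convergence of $\underline{u}^{\tau_n}$ upgrades to uniform convergence of $\nabla\E(\underline{u}^{\tau_n})\to\nabla\E(u)$ on $[0,T]$. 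Third, I need $\dot u^{\tau_n}\rightharpoonup\dot u$ weakly in $L^2((0,T),H)$; boundedness follows by summing the discrete energy dissipation \eqref{eq:energdec}, which yields $\|\dot u^{\tau_n}\|_{L^2((0,T),H)}^2\le 2(\E(u_0)-\alpha)$ uniformly. A weak limit exists up to subsequence; its identification with $\dot u$ is by uniqueness of distributional derivatives, using that $u^{\tau_n}\to u$ uniformly and $u\in W^{1,2}((0,T),H)$ by Corollary \ref{cor:regu}.

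The passage to the limit in the $\nabla\E$--term is then immediate from uniform convergence. For the $\dot u^{\tau_n}$--term I split
\begin{equation*}
\int_0^T\phi(\dot u^{\tau_n},v-\underline{u}^{\tau_n})\,\mathrm{d}t=\int_0^T\phi(\dot u^{\tau_n},v-u)\,\mathrm{d}t+\int_0^T\phi(\dot u^{\tau_n},u-\underline{u}^{\tau_n})\,\mathrm{d}t,
\end{equation*}
where the first term passes to $\int_0^T\phi(\dot u,v-u)\,\mathrm{d}t$ by weak--strong pairing (the factor $\phi(v-u)$ lies in $L^2((0,T),H)$), and the second is bounded by $\|\phi\|_\infty\|\dot u^{\tau_n}\|_{L^2}\|u-\underline{u}^{\tau_n}\|_{L^2}\to 0$. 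Hence the integrated $(FVI)$ holds in the limit, and varying $\phi\ge0$ yields, for each fixed $v\in C$, a pointwise a.e. inequality.

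The one remaining subtlety is that the (FVI) must hold for all $v\in C$ simultaneously outside a single null set of times. The main obstacle is thus quantifier commutation, which I would resolve by the standard separability argument: taking a countable dense subset $\{v_j\}\subset C$ (separability passes to $C$ since the relevant values of $u$ and $\underline{u}^{\tau_n}$ lie in a separable subspace generated by an $H$--valued measurable function on $[0,\infty)$), picking a common null set $N=\bigcup_j N_{v_j}$, and using continuity of $v\mapsto(\dot u(t)+\nabla\E(u(t)),v-u(t))$ to extend the inequality from $\{v_j\}$ to all $v\in C$ for $t\notin N$. This yields $(FVI)$ as claimed.
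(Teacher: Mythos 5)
Your route is genuinely different from the paper's: you pass to an integrated (weak) form of the discrete inequality \eqref{eq:eulerlag} and combine weak $L^2((0,T),H)$-convergence of the discrete velocities $\dot u^{\tau_n}$ with uniform convergence of the forward interpolants, whereas the paper argues pointwise at a fixed time of differentiability of $u$ and continuity of the limit energy $\phi$, writing $(\dot u(t),v-u(t))$ as a limit of difference quotients of $\overline u^{\tau_n}$ and recognizing a Riemann sum of the discrete inequalities. Most of your steps are sound: the rewriting of \eqref{eq:eulerlag} in terms of $\underline u^{\tau}$ and $\dot u^{\tau}$, the $L^2$-bound from \eqref{eq:energdec}, the identification of the weak limit with $\dot u$, the weak--strong pairing, and the error estimate for $\int_0^T\phi(\dot u^{\tau_n},u-\underline u^{\tau_n})\,\mathrm dt$. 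One caveat: the claim that uniform convergence of $\underline u^{\tau_n}$ upgrades to uniform convergence of $\nabla\E(\underline u^{\tau_n})$ because the values lie in a \emph{bounded} set and $\nabla\E$ is locally Lipschitz is not literally valid in infinite dimensions; local Lipschitz continuity gives nothing uniform on merely bounded sets. You need that $u([0,T])$ is compact (continuity of $u$), so that $\nabla\E$ is Lipschitz on a neighbourhood of this compact set --- exactly the device the paper uses in Proposition \ref{prop:enrggdisp} and Proposition \ref{prop:limitflow}. This is easily repaired.

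The genuine gap is the final quantifier-commutation step. Your argument produces, for each fixed $v\in C$, a null set $N_v$ outside of which the inequality holds, and you propose to take a countable dense subset of $C$. But in this section $H$ is an arbitrary real Hilbert space --- separability is never assumed --- and your justification (``separability passes to $C$ since the relevant values of $u$ and $\underline u^{\tau_n}$ lie in a separable subspace'') confuses the range of the trajectory with the set $C$ over which the test element $v$ ranges: a closed convex set in a nonseparable $H$ (e.g.\ $C=H$) has no countable dense subset, so $\bigcup_{v\in C}N_v$ cannot be controlled this way. A repair that needs no separability: for each $v$ the integrand $g_v(s):=(\dot u(s)+\nabla\E(u(s)),v-u(s))$ is nonnegative a.e., hence $\int_t^{t+h}g_v\,\mathrm ds\geq0$ for \emph{every} $t,h$; now fix $t$ in the $v$-independent full-measure set of Bochner--Lebesgue points of $\dot u$, divide by $h$ and let $h\to0$, using continuity of $s\mapsto u(s)$ and $s\mapsto\nabla\E(u(s))$ together with $\frac1h\int_t^{t+h}\|\dot u(s)\|\,\|u(s)-u(t)\|\,\mathrm ds\to0$. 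This yields $(FVI)$ at $t$ simultaneously for all $v\in C$ --- in effect a pointwise limiting argument of the same flavour as the paper's. (If you are willing to add the hypothesis that $H$ is separable, as it is in the application of Section \ref{Sec:Navier}, your density argument is fine, but the lemma does not assume this.)
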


\begin{proof} 
Choose $N \subset (0,\infty)  $  to be a set of measure zero such that for each $t \in (0,\infty)\setminus N$ we have $\dot{u}(t) = \lim_{h \rightarrow 0 } \frac{u(t+h) - u(t)}{h} $ and $\phi$, chosen as in Proposition \ref{prop:limitra}, is continuous in $(0, \infty)\setminus N$ (Recall that nonincreasing functions have at most countably many points of discontinuity and therefore $N$ can indeed be chosen to be a null set). Fix $t \in [0,\infty) \setminus N $ and $T> t$. Let $\epsilon > 0 $ be such that $\nabla\mathcal{E}$ is Lipschitz continuous in $B_\epsilon(u(t))$, see Assumption \ref{ass:main}. Denote the Lipschitz constant on $B_\epsilon(u(t))$ by $L$. Let $h_0 > 0$ be such that $3 \sqrt{2} \sqrt{|h_0|} (\mathcal{E}(u_0) - \alpha)^\frac{1}{2} < \frac{\epsilon}{3}$ and $n_0 \in \mathbb{N}$ be such that $|| u^{\tau_n} - u||_{C([0,T+h_0],H)}< \frac{\epsilon}{3}$ for each $n \geq n_0$. Whenever we write $\lim_{h \rightarrow 0+ }$ in the following, we shall actually mean $\lim_{h \rightarrow 0, 0 < h < h_0}$. We compute for $t \in [0,T] \setminus N$

\begin{align}\label{eq:varinn}
(\dot{u}(t) , v- u(t)  )  &=  \lim_{h \rightarrow 0 +} \left( \frac{u(t+h) - u(t) }{h} , v - u(t)  \right) \nonumber \\
 & = \lim_{h \rightarrow 0+ } \lim_{n \rightarrow \infty} \frac{1}{h} \left( \overline{u}^{\tau_n}(t+h) - \overline{u}^{\tau_n}(t) , v - u^{\tau_n}(t)  \right) 
\nonumber  \\ &  = \lim_{h \rightarrow 0 +} \lim_{n \rightarrow \infty} \frac{1}{h} \sum_{l = k_n(t) - 1}^{k_n(t+h)-2} ( u_{(l+1)\tau_n} - u_{l \tau_n} , v- u^{\tau_n}(t) ) 
\nonumber \\ & = \lim_{h \rightarrow 0 +} \lim_{n \rightarrow \infty} \frac{1}{h} \sum_{l = k_n(t) - 1}^{k_n(t+h)-2} ( u_{(l+1)\tau_n} - u_{l \tau_n} , v- u_{(l+1)\tau_n} ) 
\nonumber   \\ & \quad \quad \quad \quad \quad  + \frac{1}{h} \sum_{l = k_n(t) - 1}^{k_n(t+h)-2} (u_{(l+1) \tau_n} - u_{l \tau_n} , u_{(l+1) \tau_n} - u^{\tau_n} (t) ) .
 \end{align}
 We estimate the second summand using Remark \ref{rem:energdec} and that by  \eqref{eq:discretjump} $u_{(l+1)\tau_n} = u^{\tau_n} ((l+1) \tau_n)$ we obtain
 \begin{align*}
 &  \left\vert  \frac{1}{h} \sum_{l = k_n(t) - 1}^{k_n(t+h)-2} (u_{(l+1) \tau_n} - u_{l \tau_n} , u_{(l+1) \tau_n} - u^{\tau_n} (t) ) \right\vert\\ 
 & \leq \frac{1}{h}\sum_{l = k_n(t) - 1}^{k_n(t+h)-2} ||u_{(l+1) \tau_n } - u_{l \tau_n} || \; || u_{(l+1)\tau_n} - u^{\tau_n} (t) || 
 \\ & \leq\frac{1}{h} \sum_{l = k_n(t) - 1}^{k_n(t+h)-2}  \sqrt{2\tau_n}\sqrt{ \E(u_{l\tau_n}) - \E(u_{(l+1) \tau_n})   } ||u^{\tau_n} ((l+1) \tau_n ) - u^{\tau_n} (t) || 
 \end{align*}
 Using the Hölder estimate from Proposition \ref{prop:bddc1/2} and the fact that $|t- (l+1) \tau_n| \leq h $ for each $l \in \{ k_n(t) -1, ... , k_n(t+h) - 2 \}$ we find  
 \begin{align*}
  &  \left\vert  \frac{1}{h} \sum_{l = k_n(t) - 1}^{k_n(t+h)-2} (u_{(l+1) \tau_n} - u_{l \tau_n} , u_{(l+1) \tau_n} - u^{\tau_n} (t) ) \right\vert
 \\ & \leq \frac{3 \sqrt{2} \sqrt{\E(u_0) - \alpha }\sqrt{2\tau_n} }{h}\sum_{l = k_n(t) - 1}^{k_n(t+h)-2} \sqrt{ \E(u_{l\tau_n}) - \E(u_{(l+1) \tau_n})   } \sqrt{|t -(l+1) \tau_n|}
 \\ & \leq \frac{3 \sqrt{2} \sqrt{\E(u_0) - \alpha }\sqrt{2\tau_n} }{\sqrt{h}} \sum_{l = k_n(t) - 1}^{k_n(t+h)-2}\sqrt{ \E(u_{l\tau_n}) - \E(u_{(l+1) \tau_n})   } 
 \\ &  \leq \frac{3 \sqrt{2} \sqrt{\E(u_0) - \alpha }\sqrt{2\tau_n}\sqrt{k_n(t+h)- k_n(t)} }{\sqrt{h}} \left( \sum_{l = k_n(t) - 1}^{k_n(t+h)-2}(\E(u_{l \tau_n} )- \E(u_{(l+1) \tau_n}) )\right)^\frac{1}{2}
 \\ & \leq  6 \sqrt{\E(u_0) - \alpha}\frac{\sqrt{h + \tau_n}}{\sqrt{h}} ( \E(\overline{u}^{\tau_n}(t) ) - \E(\overline{u}^{\tau_n} (t+h) ))^\frac{1}{2} 
 \\ & =   6 \sqrt{\E(u_0) - \alpha}\frac{\sqrt{h + \tau_n}}{\sqrt{h}} ( \E_{\tau_n}(t)  - \E_{\tau_n} (t+h) )^\frac{1}{2} .
 \end{align*}
 Letting $n \rightarrow \infty$ in this expression, we obtain that 
\begin{equation*}
\limsup_{n \rightarrow \infty} \left\vert  \frac{1}{h} \sum_{l = k_n(t) - 1}^{k_n(t+h)-2} (u_{(l+1) \tau_n} - u_{l \tau_n} , u_{(l+1) \tau_n} - u^{\tau_n} (t) ) \right\vert \leq  6 \sqrt{\E(u_0)- \alpha } ( \phi(t) - \phi(t+h) )^\frac{1}{2},
\end{equation*}
which goes to zero as $h \rightarrow 0 +$ since $t$ was chosen to be a point of continuity of $\phi$. Therefore \eqref{eq:varinn} simplifies to
\begin{align*}
(\dot{u}(t) , v- u(t) ) = \lim_{h \rightarrow 0 + } \lim_{n \rightarrow \infty} \frac{1}{h } \sum_{l = k_n(t) - 1}^{k_n(t+h)-2} ( u_{(l+1)\tau_n} - u_{l \tau_n} , v- u_{(l+1) \tau_n} ) . 
\end{align*}
Using the variational inequalities for the minimizing movement elements, see Proposition \ref{prop:standard}, we can compute 
\begin{align*}
(\dot{u}(t) , v- u(t) )  & \geq \liminf_{h \rightarrow 0 + } \liminf_{n \rightarrow \infty} \frac{1}{h } \sum_{l = k_n(t) - 1}^{k_n(t+h)-2}  -\tau_n ( \nabla \E(u_{(l+1) \tau_n}) , v-u_{(l+1)\tau_n} ) \\ & 
\geq  \liminf_{h \rightarrow 0 + } \liminf_{n \rightarrow \infty} \frac{1}{h } \sum_{l = k_n(t) - 1}^{k_n(t+h)-2} -  \tau_n ( \nabla \E(u((l+1) \tau_n) , v-u((l+1) \tau_n) )  - J_{n,h}      
\end{align*}
where 
\begin{equation*}
J_{n,h} := \frac{1}{h} \sum_{l = k_n(t) - 1}^{k_n(t+h)-2} \tau_n [( \nabla \E(u_{(l+1) \tau_n}) , v-u_{(l+1)\tau_n} ) -  ( \nabla \E(u((l+1) \tau_n)) , v-u((l+1) \tau_n) ))]  .
\end{equation*}
Further, define  $I_{n,h} := [k_n(t) - 1, k_n(t+h) - 2]\cap \mathbb{N}$ and
\begin{equation*}
S_{n, h} := \sup_{l \in I_{n,h}}  |( \nabla \E(u_{(l+1) \tau_n}) , v-u_{(l+1)\tau_n} ) -  ( \nabla \E(u((l+1) \tau_n)) , v-u((l+1) \tau_n) )|.
\end{equation*}
According to \eqref{eq:discretjump}
\begin{equation*}
|J_{n,h} | \leq \frac{1}{h} \tau_n (k_n(t+h)- k_n(t) ) S_{n,h}  \leq \frac{h+\tau_n}{h} S_{n,h} .
\end{equation*}
Using that by \eqref{eq:lininter} $u_{(l+1) \tau_n } = u^{\tau_n} ( (l+1) \tau_n)$ we obtain with the Lipschitz estimate (see Assumption \ref{ass:main} and definition of $L$ at the beginning of the proof)
\begin{align}\label{eq:Snh}
S_{n,h} \leq \sup_{l \in I_{n,h} } & |(\nabla \E(u^{\tau_n} ( (l +1) \tau_n ) - \nabla \E(u((l+1) \tau_n ) ), v- u^{\tau_n} ( (l+1) \tau_n ) )|  \nonumber \\ & + |\nabla \E( (u((l+1)\tau_n) ) , u((l+1) \tau_n) - u^{\tau_n} ((l+1)\tau_n)) |
 \nonumber \\ \leq   \sup_{l \in I_{n,h} } &  (  L  (||v|| + 3 \sqrt{2} \sqrt{\E(u_0) - \alpha} \sqrt{(l+1) \tau_n })+ ||\nabla \E(u((l+1) \tau_n) )||)\nonumber \\  & \quad \quad \quad \quad \quad \cdot  || u^{\tau_n} ( (l+1) \tau_n ) - u((l+1) \tau_n )  || .
\end{align}
We need for the last step, in order to use the Lipschitz continuity from Assumption \ref{ass:main}, that for each $l \in I_{n,h}$,  $u^{\tau_n}((l+1) \tau_n) , u((l+1)\tau_n ) \in B_\epsilon(u(t))$. This is ensured by $h < h_0$ and the choice of $h_0$ in the beginning of the proof. Indeed,
\begin{align}\label{eq:esti1}
||u((l+1)\tau_n )- u(t)||  < 3 \sqrt{2} \sqrt{|(l+1) \tau_n - t|} \sqrt{\mathcal{E}(u_0) - \alpha} < \frac{\epsilon}{2} 
\end{align}
 since $(l+1) \tau_n \in [t,t+h]$ for all $l \in [k_n(t)-1, k_n(t+h)- 2]$. Also, 
 for each $n \geq n_0$ as defined in the beginning of the proof one has
\begin{align*}
 ||u^{\tau_n}((l+1) \tau_n ) - u(t) || & \leq ||u^{\tau_n}((l+1) \tau_n ) - u((l+1) \tau_n)|| + || u((l+1) \tau_n ) - u(t) || 
 \\ & \leq ||u^{\tau_n} - u||_{C([0,T],H)} + 3 \sqrt{2} \sqrt{\E(u_0) - \alpha }\sqrt{t- (l+1) \tau_n }
 \\  &  \leq || u^{\tau_n} - u||_{C([0,T],H)} + \frac{\epsilon}{2} < \epsilon  ,
\end{align*}  
where we used the same estimate as in \eqref{eq:esti1} in the last step. 
All in all, for $R_T := ||u_0|| + 3 \sqrt{2} \sqrt{\E(u_0) -\alpha} \sqrt{T} $ one obtains with \eqref{eq:Snh} that for $n \geq n_0$ 
\begin{equation*}
|S_{n,h}| \leq ( L(||v|| + 3\sqrt{2(\E(u_0) - \alpha)}\sqrt{T} + \sup_{w \in B_{R_T}(0)}||\nabla\E(w)|| ) ||u^{\tau_n} - u||_{C([0,T],H)}
\end{equation*}
and therefore $|J_{n,h}| \rightarrow 0 $ as $n \rightarrow \infty$ for each fixed $h \in (0, h_0)$. 

The original computation can now be simplified to 
\begin{equation*}
(\dot{u}(t) , v- u(t)) \geq   \liminf_{h \rightarrow 0 } \liminf_{n \rightarrow \infty} \frac{1}{h} \sum_{l = k_n(t) - 1}^{k_n(t+h)-2}  ( -\tau_n \nabla \mathcal{E}(u((l+1) \tau_n) ) , v- u((l+1) \tau_n))).
\end{equation*}
Notice that $\{k_n(t) \tau_n , ... , (k_n(t+h)- 1) \tau_n \}$ is a partition of $[t,t+h]$ and $t \mapsto (\nabla\mathcal{E}(u(t)) ,v- u(t) )$ is Riemann integrable as continuous map, see Proposition \ref{prop:limitra} and Assumption \ref{ass:main}. 
Therefore 
\begin{align*}
(\dot{u}(t),v- u(t) ) & \geq  \liminf_{h \rightarrow 0 } \liminf_{n \rightarrow \infty} \frac{1}{h} \sum_{l = k_n(t) - 1}^{k_n(t+h)-2}  \tau_n ( - \nabla \mathcal{E}(u((l+1) \tau_n) ) , v- u((l+1) \tau_n) ))
\\ &=  \liminf_{h \rightarrow 0} \frac{1}{h} \int_{t}^{t+h} (-\nabla \mathcal{E}(u(s)) , v- u(s) ) \; \mathrm{d}s = - ( \nabla \mathcal{E}(u(t)), v- u(t)) 
\end{align*}
where the last equality holds because of continuity. This proves the claim. 
\end{proof}
\begin{theorem}[Existence Result] \label{thm:exisres}

Suppose Assumption \ref{ass:main} and \ref{ass:growth}. Let $\mathcal{G}$ be the set of Corollary \ref{cor:closure} and $\mathcal{P}$ be the set in Definition \ref{def:precon}. Then 
$
\mathcal{G} \supset \overline{\mathcal{P} },
$ 
i.e. for each $u_0 \in \overline{\mathcal{P}}$ there exists an Obstacle Gradient Flow starting at $u_0$. 
\end{theorem}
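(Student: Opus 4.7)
The plan is to show $\mathcal{P} \subset \mathcal{G}$ directly by assembling the results of Section~3, and then to pass to $\overline{\mathcal{P}} \subset \mathcal{G}$ by invoking closedness of $\mathcal{G}$.

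First I would fix $u_0 \in \mathcal{P}$. By Definition~\ref{def:precon} there exists a family of minimizing movement sequences $(u_{k\tau})_{k\in\mathbb{N},\tau\in(0,1)}$ starting at $u_0$ whose restrictions to any time interval $[0,T]$ are precompact in $H$. Proposition~\ref{prop:limitra} then produces a subsequence $\tau_n \to 0$, a nonincreasing limit $\phi$ of the energy interpolations $\E_{\tau_n}$, and a limit trajectory $u \in C^{1/2}([0,\infty),H)$ such that $u^{\tau_n} \to u$ in $C([0,T],H)$ for every $T>0$. By construction $u(0) = u_0$ and $u(t) \in C$ for every $t\geq 0$ (as the $C([0,\infty),H)$-uniform limit of curves with values in the closed set $C$).

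Next I would verify the regularity and $(FVI)$ clauses of Definition~\ref{def:coneflow} for this candidate curve. The Sobolev-in-time regularity $u \in W^{1,2}_{loc}((0,\infty),H)$ follows immediately from Corollary~\ref{cor:regu}, which was derived from the $L^2$-difference quotient bound in Proposition~\ref{prop:bdondif}. The Flow Variational Inequality is the content of Lemma~\ref{lem:fvi}, which is established precisely for the limit $u$ constructed in Proposition~\ref{prop:limitra}. Taken together, $u$ fulfills all three conditions of Definition~\ref{def:coneflow}, so $u_0 \in \mathcal{G}$. Since $u_0 \in \mathcal{P}$ was arbitrary, $\mathcal{P} \subset \mathcal{G}$.

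Finally, Corollary~\ref{cor:closure} asserts that $\mathcal{G}$ is closed in $C$. Taking closures on both sides of $\mathcal{P}\subset\mathcal{G}$ yields $\overline{\mathcal{P}} \subset \overline{\mathcal{G}} = \mathcal{G}$, which is the claim. No new analytic input is required at this final step; the nontrivial work was already done in Section~3, so the "main obstacle" has really been handled upstream, namely in verifying $(FVI)$ for the limit trajectory (Lemma~\ref{lem:fvi}) and in obtaining the time-Sobolev regularity via the BV-difference-quotient argument (Proposition~\ref{prop:bdondif} together with Lemma~\ref{lem:bdddiffquot}). The only subtlety worth double-checking is that Assumption~\ref{ass:growth} is actually used via Proposition~\ref{prop:limitflow} in the proof of Corollary~\ref{cor:closure}; it is not needed for the inclusion $\mathcal{P}\subset\mathcal{G}$ itself, which relies solely on Assumption~\ref{ass:main}.
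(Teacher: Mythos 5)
Your argument is correct and coincides with the paper's own proof: the inclusion $\mathcal{P}\subset\mathcal{G}$ is obtained from Proposition \ref{prop:limitra}, Corollary \ref{cor:regu} and Lemma \ref{lem:fvi}, and the passage to $\overline{\mathcal{P}}$ uses the closedness of $\mathcal{G}$ from Corollary \ref{cor:closure}. Your closing observation that Assumption \ref{ass:growth} enters only through Corollary \ref{cor:closure} (via Proposition \ref{prop:limitflow}), while $\mathcal{P}\subset\mathcal{G}$ needs only Assumption \ref{ass:main}, matches the remark following the theorem in the paper.
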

\begin{proof}
Because of Corollary \ref{cor:closure} it is enough to show that $ \mathcal{G} \supset \mathcal{P}$. But this follows from Proposition \ref{prop:limitra}, Corollary \ref{cor:regu} and Lemma \ref{lem:fvi}.
\end{proof}

\begin{remark}
If one imposes Assumption \ref{ass:main} only one has still $\mathcal{G} \supset \mathcal{P}$. 
In the section to follow, we will outline a situation where $\mathcal{P}$ is dense in $C$ and Assumption \ref{ass:growth} holds true. Then, the previous theorem implies that $\mathcal{G} =C $, a desirable situation. 
\end{remark}

\section{Fourth Order Flows with Obstacles and Navier Boundary Conditions}\label{Sec:Navier}
For this section we fix an \emph{obstacle function} $\psi \in C^0([0,1])$ such that $\psi(0), \psi(1) < 0 $. We are considering gradient flows for the following class of fourth order obstacle problems

\begin{definition}[The Considered Framework]\label{def:navier}
Set $H := W^{2,2}(0,1) \cap W_0^{1,2}(0,1)$  
with scalar product 
$
(u,v) := \int_0^1 u'' v'' \; \mathrm{d}x 
$
and 
$
C := \{ u \in H | u \geq \psi \; a.e. \} .
$
Additionally, we suppose that 
\begin{equation}\label{eq:energ}
\E(u) := \int_0^1 \left( \frac{d}{dx} (G \circ u')(x) \right)^2  \; \mathrm{d}x+ 2 \int_0^1 K(u'(x))   \; \mathrm{d}x
\end{equation}
for functions $G, K \in C^3(\mathbb{R})$ that satisfy $G' > 0$ and $ K \geq 0 $.   
\end{definition}
First, we will check the conditions that are necessary to guarantee existence of the flow and then show further properties. An important property will be that the flow preserves $W^{3,2}$-space regularity and Navier Boundary conditions. The reason why this is so important is that both properties hold for minimizers of the static problem, as a close examination of \cite[Corollary 3.3]{Anna} and \cite[Theorem 5.1]{Anna} should reveal.
\begin{remark}
The scalar product given in Definition \ref{def:navier} is actually a scalar product and $H$ is actually a Hilbert Space because of \cite[Theorem 2.31]{Sweers}.
\end{remark}
\subsection{Verification of Existence Conditions}
\begin{remark}
Notice that $C^\infty([0,1]) \cap W_0^{1,2}(0,1)$ is dense in $H$. Indeed, if $\phi \in H$ is arbitrary but fixed then there exists $(\phi_n)_{n \in \mathbb{N}} \subset C^\infty([0,1])$ such that $\phi_n \rightarrow \phi $ in $W^{2,2}(0,1)$. As an easy computation shows
\begin{equation*}
\widetilde{\phi}_n(x) := \phi_n(x)  - (\phi_n(1)- \phi_n(0)) x - \phi_n(0)  
\end{equation*}
defines a sequence in $C^\infty([0,1]) \cap W_0^{1,2}(0,1) $ that converges to $\phi$ in $H$ .   
\end{remark}
\begin{prop}[Verification of the Assumptions, Proof in Appendix \ref{sec:App}]\label{prop:3.4}
The energy $\E$ defined in \eqref{eq:energ} satisfies Assumption \ref{ass:main} and \ref{ass:growth} and 
\begin{equation}\label{eq:frech}
 D\E(u) (\phi) =  \int_0^1 2 u'' \phi'' G'(u')^2 \; \mathrm{d}x  + 2 \int_0^1 K'(u') \phi' \; \mathrm{d}x + 2 \int_0^1 G'(u') G''(u') u''^2 \phi' \; \mathrm{d}x
\end{equation}
for all $u,\phi \in H$.  
\end{prop}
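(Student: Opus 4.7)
The strategy is to verify each condition directly, with the workhorse being the Sobolev embedding $H \hookrightarrow C^1([0,1])$, which yields $\|u'\|_\infty \leq c\|u\|_H$. Hence for $u$ in a bounded subset of $H$, $u'$ takes values in a fixed compact interval $I \subset \mathbb{R}$ on which $G$, $K$ and their first three derivatives are uniformly bounded and Lipschitz. I would begin by computing the Gâteaux derivative; noting that the first integrand equals $G'(u')^2 (u'')^2$, a direct differentiation gives
\begin{equation*}
\tfrac{d}{d\epsilon}\big|_{\epsilon=0}\E(u+\epsilon\phi) = 2\int_0^1 G'(u')^2 u'' \phi''\,\mathrm{d}x + 2\int_0^1 G'(u')G''(u')(u'')^2 \phi'\,\mathrm{d}x + 2\int_0^1 K'(u')\phi'\,\mathrm{d}x,
\end{equation*}
which is exactly \eqref{eq:frech}. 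Each summand is a bounded linear functional of $\phi$ thanks to $\|\phi''\|_{L^2}, \|\phi'\|_\infty \leq c\|\phi\|_H$, so the expression is in fact the Fréchet derivative.

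Next, the lower bound $\alpha = 0$ is immediate since both integrands are non-negative. For weak lower semicontinuity, let $u_n \rightharpoonup u$ in $H$. Rellich--Kondrachov gives $u_n' \to u'$ uniformly, hence $G^{(k)}(u_n') \to G^{(k)}(u')$ and $K(u_n') \to K(u')$ uniformly; combining the uniform convergence of $G'(u_n')$ with the weak $L^2$-convergence of $u_n''$ yields $G'(u_n')u_n'' \rightharpoonup G'(u')u''$ weakly in $L^2$, and lower semicontinuity of the $L^2$-norm takes care of the leading term while uniform convergence handles the $K$-term.

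The main obstacle is Assumption \ref{ass:growth} (which also supplies the local Lipschitzness of $\nabla_H\E$ needed for $C^{1,1}_{loc}$, as well as the uniform gradient bound on balls). Split \eqref{eq:frech} into its three summands $A_1, A_2, A_3$ and estimate $|A_i(u)(\phi) - A_i(v)(\phi)|$ for $\|\phi\|_H = 1$. For $A_1$ and $A_2$ a standard add-and-subtract argument combined with $\|u'-v'\|_\infty \leq c\|u-v\|_H$, Cauchy--Schwarz, and $\|u''\|_{L^2} = \|u\|_H$ yields the required bound. The tricky term is $A_3$ because of the quadratic factor $(u'')^2$: one writes
\begin{equation*}
A_3(u)(\phi) - A_3(v)(\phi) = 2\int_0^1 \bigl(F(u')-F(v')\bigr)(u'')^2 \phi'\,\mathrm{d}x + 2\int_0^1 F(v')\bigl((u'')^2 - (v'')^2\bigr)\phi'\,\mathrm{d}x
\end{equation*}
with $F := G'G''$, then exploits $\|\phi'\|_\infty \leq c\|\phi\|_H$ to reduce the first integral to $\|u'-v'\|_\infty \|u\|_H^2$ and factorizes $(u'')^2 - (v'')^2 = (u''-v'')(u''+v'')$ in the second to reduce it to $\|u-v\|_H(\|u\|_H + \|v\|_H)$. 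Absorbing all factors of $\|u\|_H, \|v\|_H$ into a polynomial $\zeta(\|u\|_H+\|v\|_H)$ (whose explicit form depends on the uniform bounds of $G', G'', K''$ on $I$) closes the argument; taking $u=v$ in the same inequality gives the uniform gradient bound on $B_R(0)$ required by Assumption \ref{ass:main}.
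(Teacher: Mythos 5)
Your proposal is correct and follows essentially the same route as the paper's appendix proof: the embedding $H \hookrightarrow C^1([0,1])$, uniform bounds and Lipschitz estimates for $G,K$ and their derivatives on compact intervals, the same three-term splitting with add-and-subtract and a factorization of $(u'')^2-(v'')^2$ for Assumption \ref{ass:growth}, and lower semicontinuity from uniform convergence of $u_n'$ combined with weak $L^2$-convergence of $u_n''$ (your strong-times-weak argument is just a cleaner packaging of the paper's Cauchy--Schwarz step). Two small repairs: the bound $\sup_{w\in B_R(0)}\|\nabla_H\E(w)\|<\infty$ follows from your Lipschitz estimate applied with $v=0$ (noting $\nabla\E(0)=0$), not by ``taking $u=v$'', and the jump from the G\^ateaux computation to Fr\'echet differentiability is not justified by boundedness of the candidate derivative alone --- it follows from the local Lipschitz continuity of $u\mapsto\nabla\E(u)$ that you establish afterwards.
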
 
\begin{prop}[Verification of HPR Property]
The set $C$  given in Definition \ref{def:navier} is convex, closed and has the HPR property as defined in Definition \ref{def:halfi}. 
\end{prop}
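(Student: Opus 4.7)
The plan splits naturally into the three claims. Convexity is trivial: if $u_1,u_2\in C$ and $t\in[0,1]$, then $tu_1+(1-t)u_2\geq t\psi+(1-t)\psi=\psi$ pointwise. Closedness follows at once from the Sobolev embedding $H=W^{2,2}(0,1)\cap W_0^{1,2}(0,1)\hookrightarrow C^1([0,1])$: an $H$-convergent sequence in $C$ converges uniformly, so the continuous representative of the limit still dominates $\psi$ pointwise. The substantive claim is the HPR property, and I would reduce it to the pointwise statement
\[
g := \pi_C(v)-v \geq 0 \text{ on } [0,1] \qquad \forall v\in H.
\]
Indeed, once this is known, for any $u\in C$ one has $u+\pi_C(v)-v=u+g\geq u\geq\psi$ almost everywhere, which is precisely $u+\pi_C(v)-v\in C$.

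To establish $g\geq 0$, I would start from the variational characterization of the nearest point projection: $(\pi_C(v)-v,w-\pi_C(v))\geq 0$ for every $w\in C$. Since $\pi_C(v)\in C$, the choice $w=\pi_C(v)+\eta$ with $\eta\in H$ and $\eta\geq 0$ is admissible (as then $w\geq\pi_C(v)\geq\psi$), so the variational inequality becomes
\[
\int_0^1 g''\,\eta''\,\mathrm{d}x \geq 0 \qquad \forall\,\eta\in H \text{ with } \eta\geq 0.
\]
The key trick is to choose test functions by solving a Dirichlet problem for $-\partial_x^2$: given $\phi\in C_c^\infty(0,1)$ with $\phi\geq 0$, let $\eta$ be the unique solution of $-\eta''=\phi$, $\eta(0)=\eta(1)=0$. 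Since the one-dimensional Dirichlet Green's function of $-\partial_x^2$ is strictly positive, $\eta\geq 0$, and clearly $\eta\in C^2([0,1])\cap W_0^{1,2}(0,1)\subset H$. Plugging this $\eta$ into the inequality yields
\[
0 \leq \int_0^1 g''\,\eta''\,\mathrm{d}x = -\int_0^1 g''\,\phi\,\mathrm{d}x,
\]
so $g''\leq 0$ in the distributional sense, and hence $g''\leq 0$ almost everywhere in $(0,1)$. Thus $g$ is concave on $[0,1]$; combined with $g(0)=g(1)=0$ (which holds since $\pi_C(v),v\in W_0^{1,2}$), concavity forces $g\geq 0$ on $[0,1]$.

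The main obstacle is identifying the right test functions to exploit the Navier structure. The whole argument rests on "splitting the biharmonic projection via the Laplacian": positivity of the second-order Dirichlet Green's function produces a sufficiently rich family of nonnegative test functions whose second derivatives $-\eta''=\phi$ range over all nonnegative $\phi\in C_c^\infty(0,1)$. This is exactly what is needed to promote the variational inequality to the sign condition $g''\leq 0$, which then automatically yields the desired pointwise positivity via concavity. It is worth noting that this mechanism is precisely what would fail for clamped boundary conditions, where the biharmonic Green's function need not be positive, so the pointwise positivity $\pi_C(v)\geq v$ cannot be expected to hold in general.
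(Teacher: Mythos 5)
Your proof is correct and follows essentially the same route as the paper: reduce HPR to the pointwise sign condition $\pi_C(v)-v\geq 0$, use the projection variational inequality with nonnegative test functions in $H$, invert the second-order Dirichlet problem to turn this into a sign condition on $(\pi_C(v)-v)''$, and conclude by a maximum-principle argument (your concavity-plus-zero-boundary-values step is the same mechanism as the paper's appeal to the weak maximum principle).
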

\begin{proof} Convexity and closedness are easy to show. We only show the HPR property.
We have to show that for $u \in C$ and $v \in H$, $u + \pi_C(v) - v \in C$. We do this by showing that $v -\pi_C(v) \leq 0 $ for each $v \in H$. If this is shown $u \geq \psi$ implies certainly $u + \pi_C(v) - v \geq \psi$, which proves the claim. Now let $v \in H$ arbitrary but fixed. 
For each $ w \in C$  we infer from \cite[Theorem 5.2]{Brezis} that 
$
(v- \pi_C(v), w - \pi_C(v)) \leq 0 .
$
Let $\phi \in W_0^{1,2}(0,1) \cap W^{2,2}(0,1)$ be arbitrary such that $\phi \geq 0 $ almost everywhere. Choosing $w = \pi_C(v)+ \phi$  we find that  $w \in C$ and therefore 
\begin{equation*}
\int_0^1 (v - \pi_C(v)) '' \phi'' \; \mathrm{d}x = (v - \pi_C(v) , \phi) \leq 0 .
\end{equation*}
Fix $f \in L^2(0,1)$ be such that $f \leq 0 $ a.e.. Then, according to elliptic regularity and the weak maximum principle, there exists $\phi \in W_0^{1,2}(0,1)\cap W^{2,2}(0,1)$ such that $\phi \geq 0 $ and $\phi'' = f $ a.e. This implies that 
\begin{equation*}
\int_0^1 (v - \pi_C(v)) '' f \; \mathrm{d}x \leq 0 \quad \forall f \in L^2(0,1) : f \leq 0  \; a.e.  
\end{equation*}
However this results in $(v- \pi_C(v))'' \geq 0 $ almost everywhere. The weak maximum principle implies that $v- \pi_C(v) \leq 0 $ almost everywhere which proves the claim. 
\end{proof}
\begin{lemma}[A Recursion Identity, Proof in Appendix \ref{sec:App}]\label{lem:ind:rec}
Suppose that $(a_k)_{k \in \mathbb{N}_0} \subset \mathbb{R}$ is a sequence such that $a_0 \geq 0 $ and there are $\tau, Z_1, Z_2 > 0 $ such that 
$
a_{k+1} \leq (1+ Z_1 \tau) a_k + Z_2 \tau$ for each $k \in \mathbb{N}$.
Then
\begin{equation*}
a_l \leq ( 1 + Z_1 \tau)^l a_0  + \frac{Z_2}{Z_1} ( (1+ Z_1 \tau)^l - 1) \quad \forall l \in \mathbb{N}. 
\end{equation*}
\end{lemma}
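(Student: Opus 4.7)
The plan is to prove this by straightforward induction on $l \in \mathbb{N}$, since the claimed bound is a textbook discrete Gronwall inequality and the recursive hypothesis essentially encodes Euler's scheme for the ODE $\dot a = Z_1 a + Z_2$.

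For the base case $l = 1$, I would simply apply the hypothesis once and observe that $(1+Z_1\tau)a_0 + Z_2\tau$ coincides exactly with the claimed right-hand side, since $\frac{Z_2}{Z_1}((1+Z_1\tau)^1-1) = Z_2\tau$. For the induction step, assuming the bound at level $l$, I would estimate
\begin{align*}
a_{l+1} &\leq (1+Z_1\tau)a_l + Z_2\tau \\
&\leq (1+Z_1\tau)^{l+1} a_0 + \frac{Z_2}{Z_1}(1+Z_1\tau)\bigl((1+Z_1\tau)^l-1\bigr) + Z_2\tau
\end{align*}
and then expand and collect terms: the second summand equals $\frac{Z_2}{Z_1}(1+Z_1\tau)^{l+1} - \frac{Z_2}{Z_1} - Z_2\tau$, so adding the $Z_2\tau$ cancels the last term and leaves exactly $\frac{Z_2}{Z_1}\bigl((1+Z_1\tau)^{l+1}-1\bigr)$, completing the step.

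There is no genuine obstacle here; the only thing to be mindful of is that the recursion is assumed for $k \in \mathbb{N}$, which I will interpret as including $k=0$ so that the chain starts at $a_0$ (otherwise the estimate at $l=1$ would not even be available and the statement would have to begin at $l=2$, which is not the content of the conclusion). Given that the algebraic identity $(1+Z_1\tau)\cdot\frac{Z_2}{Z_1}\cdot Z_1\tau + Z_2\tau \cdot (\text{etc.})$ telescopes cleanly, writing the proof amounts to a few lines of bookkeeping and no further structural argument is required.
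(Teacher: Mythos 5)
Your induction is correct and is essentially the paper's own argument: the appendix proof proceeds by the same induction with the identical algebraic telescoping, only starting from the trivial base case $l=0$ instead of $l=1$. Your remark about needing the recursion at $k=0$ is a fair reading of the (slightly loose) indexing, and the paper's proof implicitly uses it in the same way.
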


\begin{prop}[Precompactness of the Discrete Trajectories]\label{prop:existence}

Suppose that $u_0 \in W^{3,\infty }(0,1) \cap H$ and $u_0''(0)= u_0''(1) = 0 $. Let $(u_{k \tau})_{k \in \mathbb{N}, \tau \in (0,1)}$ be a family of minimizing movement sequences generated by Algorithm \ref{algo:minimov} for \eqref{eq:energ}.  Then for each $T> 0 $ the set $(u_{k \tau } )_{ \tau \in (0,1), k \tau \leq T } $ is bounded in $W^{3,\infty }(0,1)$ and therefore precompact in $H$.   
\end{prop}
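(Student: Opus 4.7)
The overall strategy is to reduce the statement to the recursion Lemma \ref{lem:ind:rec}. Specifically, I would set $a_k := ||u_{k\tau}||_{W^{3,\infty}(0,1)}^2$ and aim to prove the one-step estimate
\[
a_k \leq (1 + Z_1 \tau)\, a_{k-1} + Z_2 \tau
\]
for constants $Z_1, Z_2 > 0$ depending only on $T$, $\E(u_0)$, $G$, $K$, and $\psi$, but neither on $\tau$ nor on $k$. Once this recursion is in place, Lemma \ref{lem:ind:rec} yields $a_l \leq (1+Z_1\tau)^l a_0 + \frac{Z_2}{Z_1}\bigl((1+Z_1\tau)^l - 1\bigr)$; together with the restriction $k\tau \leq T$ and $(1+Z_1\tau)^k \leq e^{Z_1 T}$, this gives a bound uniform in $\tau$ and $k$. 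Note $a_0 < \infty$ by the hypothesis $u_0 \in W^{3,\infty}$. Boundedness in $W^{3,\infty}$ together with the compact embedding $W^{3,\infty}(0,1) \hookrightarrow\hookrightarrow W^{2,2}(0,1) = H$ (Rellich--Kondrachov) then delivers the claimed precompactness.

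\textbf{Steps toward the recursion.} I would proceed by induction on $k$. First, establish the qualitative regularity $u_{k\tau} \in W^{3,\infty}(0,1)$ together with the Navier boundary conditions $u_{k\tau}''(0)=u_{k\tau}''(1)=0$. This follows by applying the static obstacle regularity results of \cite{Anna} and \cite{Mueller} to the penalized functional $u \mapsto \E(u) + \frac{1}{2\tau}||u - u_{(k-1)\tau}||_H^2$, which shares the structural form of $\E$ up to a coercive quadratic perturbation in $H$. The Navier conditions are natural: since $\psi(0),\psi(1)<0$ and $u_{k\tau}$ is continuous with $u_{k\tau}(0)=u_{k\tau}(1)=0$, the strict inequality $u_{k\tau}>\psi$ holds in open neighborhoods of the boundary, where the full Euler--Lagrange equation is valid, and free variations in $H$ force $u_{k\tau}''(0)=u_{k\tau}''(1)=0$. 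Second, on the open set $\{u_{k\tau}>\psi\}$ the variational inequality \eqref{eq:eulerlag} with $D\E$ from \eqref{eq:frech} becomes an equation. Rewriting it in divergence form and integrating twice, using the Navier boundary conditions to fix constants of integration, one obtains a pointwise identity of the schematic form
\[
u_{k\tau}'''(x) = u_{(k-1)\tau}'''(x) + \tau\, F\bigl(x, u_{k\tau}(x), u_{k\tau}'(x), u_{k\tau}''(x)\bigr),
\]
where $F$ is locally bounded in its arguments. Since the energy decay \eqref{eq:energdec} combined with the Sobolev embedding $H \hookrightarrow C^{1,1/2}([0,1])$ yields a uniform $W^{2,\infty}$-bound on $u_{k\tau}$, taking $L^\infty$-norms in the identity and absorbing the quadratic terms on the left produces exactly the desired recursive estimate for $a_k$.

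\textbf{Main obstacle.} The delicate point is propagating control of $u_{k\tau}'''$ across the contact set $\{u_{k\tau}=\psi\}$, where the variational inequality is only one-sided and $\psi$ has no a priori second derivative. Two tools should make this tractable: the HPR-property already verified for $C$ allows one to extend test variations through the contact region, and at boundary points of the contact set the minimizer must touch $\psi$ with compatible first-order behavior, which combined with the strong smoothing of the fourth-order penalty $\frac{1}{\tau}||u-u_{(k-1)\tau}||_H^2$ should be enough to keep $||u_{k\tau}'''||_{L^\infty}$ finite across the contact set. A secondary technical point is that the penalty coefficient $1/\tau$ blows up as $\tau\to 0$, so one must track carefully how it enters the estimates to ensure it is absorbed into the favorable combinations $Z_1\tau$ and $Z_2\tau$ rather than producing unbounded contributions. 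Once these points are settled, the remaining arguments are standard integration-by-parts computations using the explicit form \eqref{eq:frech} of $D\E$.
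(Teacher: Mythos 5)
Your overall skeleton does match the paper's: induction in $k$, a linear one-step recursion closed by Lemma \ref{lem:ind:rec}, the bound $(1+Z_1\tfrac{T}{k})^k\le e^{Z_1T}$, and compact embedding of $W^{3,\infty}(0,1)$ into $H$. But the two places where you wave at the details are exactly where the real work lies, and as stated your intermediate claims fail. First, the assertion that the energy decay \eqref{eq:energdec} plus $H\hookrightarrow C^{1,1/2}$ yields a uniform $W^{2,\infty}$ bound on $u_{k\tau}$ is false: what is available uniformly in $k,\tau$ is only the $H=W^{2,2}\cap W_0^{1,2}$ bound of Corollary \ref{cor:growthest}, hence $W^{1,\infty}$ control, but no $L^\infty$ control of $u_{k\tau}''$. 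In the paper the uniform bound on $\|u_{k\tau}''\|_\infty$ is an \emph{output} of the third-derivative estimates (a uniform $L^1$ bound on $u_{k\tau}'''$ combined with $u_{k\tau}''(0)=0$), not an input. If you feed an uncontrolled $\|u''\|_\infty$ into your ``locally bounded $F$'', the constants $Z_1,Z_2$ are no longer independent of $k$ and $\tau$ and the recursion is worthless. This is precisely why the paper first runs the recursion for $\int_0^1|u_{k\tau}'''|$ — there the quadratic term $G'(u')G''(u')u''^2$ is controlled by the uniform $W^{2,2}$ bound — and only afterwards upgrades to the pointwise recursion $|u_{(k+1)\tau}'''(x)|\le|u_{k\tau}'''(x)|+Z_4\tau$, rather than attacking $\|u_{k\tau}\|_{W^{3,\infty}}$ directly.

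Second, your treatment of the contact set does not supply the needed quantitative ingredient. The HPR property plays no role here, and ``compatible first-order touching'' gives no control whatsoever on the constraint reaction, which may contain atoms. What is required is a $(k,\tau)$-uniform bound on the Lagrange multiplier: from \eqref{eq:eulerlag} one obtains a finite Radon measure $\mu_{k\tau}$ supported on $\{u_{(k+1)\tau}=\psi\}\subset[\delta,1-\delta]$, it enters the formula for $u_{(k+1)\tau}'''$ through $m_{k\tau}=\mu_{k\tau}([\cdot,1])$ and an integration constant $C_{k\tau}$, and the paper bounds $\mu_{k\tau}(0,1)$ uniformly by testing with a fixed cutoff and invoking the rate of energy descent $(\E(u_{k\tau})-\E(u_{(k+1)\tau}))/\tau\le C_T$ from Proposition \ref{prop:energdec}, while $C_{k\tau}$ is bounded through the identity $\int_0^1 u_{(k+1)\tau}'''=0$ (valid only thanks to the Navier conditions) in terms of $\int_0^1|u_{k\tau}'''|$. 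Without these two steps the recursion constants are not uniform and the argument collapses. Finally, note that the Navier conditions are not forced by free boundary variations alone: the natural boundary condition of the penalized functional reads $(\tfrac1\tau+2G'(u'(0))^2)\,u_{(k+1)\tau}''(0)=\tfrac1\tau u_{k\tau}''(0)$, so $u_{(k+1)\tau}''(0)=0$ follows only via the induction hypothesis $u_{k\tau}''(0)=0$; this is exactly why the hypothesis $u_0''(0)=u_0''(1)=0$ appears in the statement, and your sketch should make this induction explicit.
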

\begin{proof}
Fix $T> 0$. Let $\widetilde{\delta} > 0 $ be such that $\psi_{ \mid_{[0, \widetilde{\delta}]}} \leq \frac{\psi(0)}{2}$ and $\psi_{\mid_{[1-\widetilde{\delta}, 1]}} \leq \frac{\psi(1)}{2}$.
Define 
\begin{equation*}
\delta := \min \left\lbrace \widetilde{\delta} , \frac{\min(|\psi(0)|, |\psi(1)|)}{4 C_P ( ||u_0|| + 3 \sqrt{2} \sqrt{\E(u_0) - \alpha } \sqrt{T}) } \right\rbrace 
\end{equation*}
where $C_P$ denotes the operator norm of the embedding $H \hookrightarrow W^{1,\infty}(0,1) \cap W_0^{1,2}(0,1).$ 
Let $\tau \in (0,1)$ be arbitrary. We first claim that $u_{k \tau} > \psi$ on $[0, \delta] \cup [1-\delta ,1]$. Indeed, for arbitrary $\theta \in [0, \delta]$ we find using \eqref{eq:unifest}
\begin{align*}
|u_{k\tau} (\theta)| & \leq \delta ||u_{k \tau}'||_\infty \leq \delta C_P ||u_{k\tau}||\\ & \leq \delta C_P(||u_0|| + 3 \sqrt{2} \sqrt{\E(u_0) - \alpha}\sqrt{T}) < \frac{|\psi(0)|}{2} \leq |\psi(\theta)|.
\end{align*} 
Since $\psi(\theta)$ is negative, the claim follows. For $\theta \in [1- \delta, 1]$ the estimate can be shown similarly.

 Recall from Propositon \ref{prop:standard} that $u_{(k + 1) \tau} \in \arg \min_{w \in C} \E(w) + \frac{1}{2\tau } ||w - u_{k \tau}||^2$ implies that 
\begin{equation*}
\frac{1}{\tau} (u_{(k+1) \tau} - u_{k \tau} , v - u_{ (k+1) \tau }) +( \nabla \E(u_{(k+1) \tau}) , v- u_{(k+1) \tau } ) \geq 0  \quad \forall v \in C. 
\end{equation*} 
For the sake of simplicity of notation we define $u := u_{ (k+1) \tau } $ and leave out the integration indicators $'\mathrm{d}x'$. Note that for each $ \phi \in C_0^\infty(0,1)$ such that $\phi \geq 0 $ one can use $u+ \phi$ as test function in \eqref{eq:eulerlag}. Using this and Proposition \ref{prop:3.4} we find
\begin{equation}\label{eq:eli}
\int_0^1 \frac{u''- u_{k \tau }'' }{\tau} \phi'' + \int_0^1 2 u'' \phi'' G'(u')^2 + 2 \int_0^1 K'(u') \phi' + 2 \int_0^1 G'(u') G''(u') u''^2 \phi' \geq 0 .
\end{equation}
Additionally, for each $\phi \in C_0^\infty(\{ u > \psi \}) $ and also for each $\phi \in C_0^\infty([0, \delta)) $ such that $\phi(0) = 0 $ 
\begin{equation}\label{eq:elk}
\int_0^1 \frac{u''- u_{k \tau }'' }{\tau} \phi'' + \int_0^1 2 u'' \phi'' G'(u')^2 + 2 \int_0^1 K'(u') \phi' + 2 \int_0^1 G'(u') G''(u') u''^2 \phi'  = 0 .
\end{equation}
\textbf{Our first claim is that $u \in W^{3,\infty}(0, \delta)$ and $u''(0) = 0$}.  For this we proceed by induction over $k$. The first step of the induction is $k = 0$ so $u = u_{1,\tau} \in W^{3,\infty} ( 0,\delta)$. For this we observe with \eqref{eq:elk} that for all $\phi \in C_0^\infty(0,\delta) $ 
\begin{equation} \label{eq:=0}
\int_0^1 \frac{u''- u_{0 }'' }{\tau} \phi'' + \int_0^1 2 u'' \phi'' G'(u')^2 + 2 \int_0^1 K'(u') \phi' + 2 \int_0^1 G'(u') G''(u') u''^2 \phi'  = 0  .
\end{equation}
This implies (see \cite[Lemma 13.1]{Hestenes}) that
$
\frac{u'' - u_0''}{\tau} +2  u'' G'(u')^2
$
is weakly differentiable and there is a constant $Z \in \mathbb{R}$ such that 
\begin{equation}\label{eq:3.56}
\left( \frac{u'' - u_0''}{\tau} +2  u'' G'(u')^2 \right)' = Z + 2 K'(u') + 2 G'(u')G''(u') u''^2 \in L^1 (0 ,\delta).
\end{equation}
Since $u_0'' \in W^{1,\infty}(0, \delta)$ and $\frac{1}{G' \circ u'} \in W^{1,2}(0,1) $ as $\frac{1}{G'}$ is locally Lipschitz, we obtain by the product rule that $u'' \in W^{1,2}(0, \delta) $ and 
\begin{equation}\label{eq:bootstr}
 u''' = \frac{1}{\frac{1}{\tau} + 2 G'(u')^2} \left( \frac{1}{\tau} u_0'''  + Z +  2K'(u')  - 2 G'(u') G''(u') u''^2  \right) .
\end{equation}  
We infer from the equation that $u''' \in L^1(0,\delta)$. From this however, one concludes that $u'' \in L^\infty(0,\delta)$ and as a result of that, \eqref{eq:bootstr} yields that $u''' \in L^\infty(0,\delta)$. The induction step is now very similar since we can assume that $u_{k \tau } \in W^{3,\infty}$ and therefore we generate the same equations for $u$ as before for with $u_0$ replaced by $u_{k \tau }$. We leave the details to the reader. 
To show that $u''(0) = 0 $ we proceed again by induction. Here we just show that $u_{k \tau } '' (0) = 0 $ implies that $u''(0) = u_{ (k+1) \tau }''(0)  = 0$. For this we conclude just like in \eqref{eq:=0} and \eqref{eq:3.56} that 
\begin{equation}\label{eq:neufuer}
\left( \frac{u'' - u_{k \tau}''}{\tau} + 2 u'' G(u')^2 \right)' = \widetilde{Z} + 2 K'(u') + 2 G'(u') G''(u') u''^2
\end{equation}
for some $\widetilde{Z} \in \mathbb{R}$. Now plug some $\phi\in C_0^\infty([0, \delta)) $ into \eqref{eq:elk} that satisfies $\phi(0) =0 $ and $\phi'(0) \neq 0 $. Integrating by parts in \eqref{eq:elk} we obtain 
\begin{align}\label{eq:3.60}
0 & = \left[ \left( \frac{u'' - u_{k \tau }'' }{\tau } + 2 u'' G'(u')^2 \right) \phi' \right]_0^\delta \nonumber \\ &  -  \int_0^\delta \phi' \left(
\frac{u'''- u_{k \tau}'''}{\tau }+ \frac{d}{dx} ( 2 u'' G'(u')^2) - 2 K'(u') - 2 G'(u') G''(u') u''^2 \right) \; \mathrm{d}x .
\end{align}
Evaluating in the first summand  and using \eqref{eq:neufuer} we obtain
\begin{equation}\label{eq:4.25}
0  = \phi'(0) \left(  \frac{u''(0) - u_{k \tau } ''(0) }{\tau } + 2 u''(0) G'(u'(0))^2  \right) - \int_0^\delta \phi' \widetilde{Z} \; \mathrm{d}x .
\end{equation}
 The last integral evaluates to $ \widetilde{Z} ( \phi(\delta)- \phi(0) ) = 0 $.  
Using the induction hypothesis that  $u_{k \tau } '' (0) = 0 $ we find $u''(0) =0$. We can show very similarly that $u \in W^{3,\infty}((1-\delta, 1))$ and $u''(1) = 0$. \textbf{Next, we show that $u \in W^{3,\infty}(0,1)$.}
  We conclude from \eqref{eq:elk} and \eqref{eq:eli} and \cite[Corollary 1, Section 1.8]{EvansGariepy} that there exists a Radon measure $\mu_{k \tau}$ on $(0,1)$ supported on $\{ u = \psi \} $ such that for all $\phi \in C_0^\infty(0,1)$: 
\begin{equation}\label{eq:muktau}
\int_0^1 \frac{u''- u_{k \tau }'' }{\tau} \phi'' + \int_0^1 2 u'' \phi'' G'(u')^2 + 2 \int_0^1 K'(u') \phi' + 2 \int_0^1 G'(u') G''(u') u''^2 \phi' = \int \phi \; \mathrm{d}\mu_{k \tau}.
\end{equation}
The measure $\mu_{k \tau}$ is finite since $\{u = \psi\} \subset [\delta, 1- \delta] $ is compactly contained in $(0,1)$.  
Using Fubini's Theorem, we find for $\phi \in C_0^\infty(0,1) $
\begin{equation}\label{eq:vertfkt}
\int_0^1 \phi \; \mathrm{d}\mu_{k \tau}  = \int_0^1 \int_0^x \phi'(s) \; \mathrm{d}s \; \mathrm{d}\mu_{k \tau}(x) = \int_0^1 \mu_{k\tau}([ s,1]) \phi'(s) \; \mathrm{d}s . 
\end{equation}
Therefore 
\begin{align}\label{eq:ABCD}
\int_0^1 \frac{u''- u_{k \tau }'' }{\tau} \phi'' & + \int_0^1 2 u'' \phi'' G'(u')^2 + 2 \int_0^1 K'(u') \phi' \nonumber \\ & + 2 \int_0^1 G'(u') G''(u') u''^2 \phi' - \int \phi' \mu_{k \tau}([x,1]) \; \mathrm{d}x = 0 .
\end{align}
From here we can proceed like in \eqref{eq:=0} and \eqref{eq:3.56}. Define $m_{k\tau}(x):= \mu_{k\tau}( [x,1])$ and observe, according to \eqref{eq:ABCD}, that there is $C_{k \tau } \in \mathbb{R}$ such that 
\begin{equation}\label{eq:mkatau}
\left( \frac{u''- u_{k\tau}''}{\tau } + 2 G'(u') u'' \right)' =- m_{k \tau}(x) - C_{k\tau} + 2G'(u') G''(u') u''^2 + 2K'(u')  \in L^1(0,1) 
\end{equation} 
Using the product rule again we obtain that $u \in W^{3,1}(0,1)$ and 
\begin{equation}\label{eq:3.71}
\frac{u''' - u_{k\tau} '''}{\tau} + 2 G'(u')^2 u''' = - m_{k\tau}(x) - C_{k\tau} - 2 G'(u') G''(u') u''^2 + 2 K'(u')  
\end{equation}
and rearrranged 
\begin{equation}\label{eq:u'''}
u''' = \frac{1}{\frac{1}{\tau } + 2 G'(u')^2} \left( \frac{u_{k\tau}'''}{\tau } - m_{k \tau} -C_{k\tau}  - 2 G'(u') G''(u') u''^2 + 2 K'(u') \right) .
\end{equation}
Note that this already implies that $u \in W^{3, \infty}(0,1)$ since $u \in W^{3,1}(0,1) $ implies that $u'' \in L^\infty(0,1) $ and given this information, we conclude from \eqref{eq:u'''} that $u''' \in W^{3,\infty}(0,1)$.

\textbf{We claim that there is $D = D(\delta, \E(u_0), ||u_0|| , T )$ independent of $k, \tau$ such that} 
$
 \mu_{k \tau} (0,1) \leq D .
$
 To show this, choose a fixed $\overline{\varphi} \in C_0^\infty(0,1)$ such that $0 \leq \overline{\varphi} \leq 1 $ and $\phi_{\delta} \equiv 1 $ on $[\delta, 1- \delta]$. Set $R_T :=  (||u_0|| + 3\sqrt{2}\sqrt{\E(u_0) - \alpha } \sqrt{T})$ and $B(T) := B_{C_P R_T} (0)$, where - recall - $C_P$ is the operator norm of the embedding operator $\iota : H \hookrightarrow W^{1,\infty}(0,1)$. 
 Also define $||F||_{\infty,B(T)} := \sup_{z \in B(T)} |F(z)|$ for given $F \in C^0(\mathbb{R})$. We can estimate using Remark  \ref{rem:energdec} and Corollary \ref{cor:growthest} 
\begin{align*}
\mu_{k\tau} (0,1) & = \mu_{k \tau } ( [\delta , 1- \delta] ) \leq \int_0^1 \overline{\varphi} \; \mathrm{d}\mu_{k \tau } 
\\ &= \int_0^1 \frac{u''- u_{k \tau }'' }{\tau} \overline{\varphi}'' + \int_0^1 2 u'' \overline{\varphi}'' G'(u')^2 \\ & \quad + 2 \int_0^1 K'(u') \overline{\varphi}' + 2 \int_0^1 G'(u') G''(u') u''^2 \overline{\varphi}'  
\\ & \leq \frac{||u - u_{k \tau}||}{\tau } ||\overline{\varphi}|| + 2 \left( \int_0^1 u''^2 G'(u')^4 \right)^\frac{1}{2} ||\overline{\varphi}|| \\ & \quad + 2||K'||_{\infty,B(T)} ||\overline{\varphi}'||_\infty + 2 ||G'||_{\infty, B(T)} ||G''||_{\infty, B(T)}  \int_0^1 u''^2 \; \mathrm{d}x  ||\overline{\varphi}'||_\infty 
\\ & \leq ||\overline{\varphi}|| \left(  \sqrt{2} \left(\frac{ \E(u_{k\tau})- \E(u)}{\tau} \right)^\frac{1}{2} + 2 ||G'||_{\infty,B(T)}^2R_T  \right) \\ & \quad + ||\overline{\varphi}||  \left( 2 C_P ||K'||_{\infty,B(T)}^2 +2 ||G'||_{\infty, B(T)} ||G''||_{B(T)}C_P R_T^2 \right) .
\end{align*} 
Note  that $\frac{\E(u_{k\tau})-\E(u) }{\tau} \leq C_T $ where $C_T> 0$ is the constant from Proposition \ref{prop:energdec}. All in All we obtain $D$ independent of $k, \tau$ such that 
$
\mu_{k\tau}(0,1) \leq D,  
$ as claimed.

\textbf{We continue showing that $|u'''|$ is bounded independently of $k , \tau$}.
We first prove an estimate for $|C_{k\tau}|$, defined as in \eqref{eq:mkatau}. For this observe that, using $u''(0) = u''(1)= 0$ and \eqref{eq:u'''}, 
\begin{align*}
0 & =  \int_0^1 u'''  =  \int_0^1 \frac{\frac{1}{\tau} u_{k\tau} '''}{\frac{1}{\tau } + 2 G'(u')^2 } - \int_0^1 \frac{m_{k\tau}}{{\frac{1}{\tau } + 2 G'(u')^2 }} + C_{k \tau } \int_0^1 \frac{1}{\frac{1}{\tau } + 2 G'(u')^2 } \\ & \qquad \qquad \qquad \qquad \qquad \qquad   - \int_0^1 \frac{2 G'(u') G''(u') u''^2}{\frac{1}{\tau} + 2 G'(u')^2} + 2 \int_0^1 \frac{K'(u')}{\frac{1}{\tau }+ 2 G'(u')^2}.
\end{align*}
Notice that $\int_0^1 u_{k\tau}''' =u_{k\tau}''(1)- u_{k\tau}''(0)  = 0$ and therefore  
\begin{equation}\label{eq:4.44}
 \int_0^1 \frac{\frac{1}{\tau} u_{k\tau} '''}{\frac{1}{\tau } + 2 G'(u')^2 } =- \int_0^1 \frac{2 G'(u')^2u_{k \tau}'''}{\frac{1}{\tau} +2 G'(u')^2}.
\end{equation}
Hence
\begin{align*}
|C_{k\tau}| \int_0^1 \frac{1}{{\frac{1}{\tau } + 2 G'(u')^2 }} & \leq \tau ||m_{k \tau}||_\infty + 2||G'||_{\infty,B(T)}^2 \tau \int_0^1 |u_{k\tau}'''|  \\ & \quad +2 \tau ||K'||_{\infty,B(T)} + 2\tau ||G'||_{\infty,B(T)}  ||G''||_{\infty, B(T)} R_T^2 .  
\end{align*}
Now observe that  since $\tau < 1 $
\begin{equation*}
|C_{k\tau}| \int_0^1 \frac{1}{{\frac{1}{\tau } + 2 G'(u')^2 }} \geq |C_{k\tau} | \frac{1}{\frac{1}{\tau }+ 2 ||G'||^2_{\infty, B(T)} } \geq \frac{|C_{k \tau} | \tau}{1+ 2 ||G'||^2_{\infty,B(T)}} .
\end{equation*}
All in all we get the crucial estimate for $C_{k\tau}$: 
\begin{align}\label{eq:Cktau}
|C_{k\tau} | & \leq (1 + 2 ||G'||^2_{\infty,B(T)}) \bigg( ||m_{k\tau}||_\infty + 2 ||G'||_{\infty,B(T)} ||G''||_{\infty,B(T)}R_T^2 \nonumber  \\ & \qquad \qquad  \qquad \qquad \qquad \qquad  + 2||K'||_{\infty, B(T)} +  2||G'||^2_{\infty,B(T)}  \int_0^1 |u_{k\tau}'''| \bigg) \nonumber
\\ &   \leq (1 + 2 ||G'||^2_{\infty,B(T)}) \bigg( D +2 ||G'||_{\infty,B(T)} ||G''||_{\infty,B(T)}R_T^2 \nonumber \\ &  \qquad \qquad  \qquad \qquad \qquad \qquad  + 2||K'||_{\infty, B(T)} +  2||G'||_{\infty,B(T)}^2 \int_0^1 |u_{k\tau}'''| \bigg) \nonumber
\\ & \leq  Z_1 \int_0^1 |u_{k\tau}'''|  + Z_2 ,
\end{align}
where $Z_1>0$, $Z_2 \in \mathbb{R}$ are constants only depending on $T$ but independent of $k , \tau $. Taking absolute values and integrating in $\eqref{eq:u'''}$, then using the derived estimate for $|C_{k \tau}|$ we obtain that 
\begin{align*}
\int_0^1 |u'''| & \leq \int_0^1 |u_{k\tau}'''| + \tau D +  \tau |C_{k\tau}|  + 2 \tau \int_0^1 |G'||G''| u''^2 + 2 \tau ||K'||_{\infty,B(T)} 
\\ & \leq  (1 + Z_1 \tau) \int_0^1 |u_{k \tau} ''' | + \tau ( Z_2  + 2 ||G'||_{\infty,B(T)} ||G''||_{\infty, B(T)} R_T^2 + 2 ||K'||_{\infty,B(T)}+D ) 
\\ & \leq (1+ Z_1 \tau ) \int_0^1 |u_{k\tau}'''| + \tau Z_3 
\end{align*}
for some $Z_3$ independent of $k, \tau$. 
If we set $a_{k \tau} := \int |u_{k\tau}'''| $ we obtain the recursive inequality 
\begin{equation}
a_{(k+1)\tau} \leq (1+ Z_1\tau) a_{k \tau} + \tau Z_3.
\end{equation}
This given, Lemma \ref{lem:ind:rec} shows that 
\begin{equation*}
a_{k\tau} \leq (1+Z_1 \tau)^k a_{0\tau} + \frac{Z_3}{Z_1} ( (1+ Z_1\tau)^{k} - 1 ). 
\end{equation*}
We observe that $\tau \leq \frac{T}{k}$ and therefore 
\begin{equation*}
a_{k\tau} \leq \left(1+Z_1 \frac{T}{k}\right)^k a_{0\tau} + \frac{Z_3}{Z_1} \left( \left(1+ Z_1\frac{T}{k}\right)^{k} - 1 \right) \leq e^{Z_1T} a_{0\tau} + \frac{Z_3}{Z_1} (e^{Z_1T} - 1) 
\end{equation*}
since $\left(\left(1 + \frac{Z}{k} \right)^k \right)_{k\in \mathbb{N}}$ is bounded from above by $e^Z$ for all $Z> 0$.  
Hence and since $a_{0\tau }  = \int_0^1 |u_0'''|  $ for each $\tau \in (0,1)$ we have
\begin{equation*}
 \int |u'''| , \int |u_{k\tau}'''| \leq  e^{Z_1T} \int |u_0'''| + \frac{Z_3}{Z_1} (e^{Z_1T} - 1).
\end{equation*}
Uniform Boundedness of $||u_{k\tau}'''||_{L^1}$ independent of $k, \tau$ implies also uniform boundedness of $C_{k\tau}$ independent of $k, \tau$, see \eqref{eq:Cktau}.  Together with $u_{k\tau}''(0) = 0 $ we obtain uniform boundedness of $||u_{k\tau}''||_\infty$ independent of $k, \tau$. Using all these uniform bounds in \eqref{eq:u'''} we obtain that there exists some $Z_4$ independent of $k,\tau$ such that
\begin{equation*}
|u'''(x)| \leq |u_{k \tau} '''(x) | + Z_4 \tau \quad a.e. x \in (0,1).
\end{equation*}
If we understand this again as a recursion formula (recall $u =u_{(k+1)\tau}$ and $k$ was arbitrary), we obtain that for each $k \in \mathbb{N}$ such that $k \tau \leq T$
\begin{equation}\label{eq:3.109}
|u_{k\tau}'''(x)| \leq |u_0'''(x) | + Z_4 k \tau \leq ||u_0'''||_\infty + Z_4 T .
\end{equation}
The claim follows. 
\end{proof}
\begin{remark}\label{rem:navidisrc}
The proof of Proposition \ref{prop:existence} reveals that for $u_0$ as in the statement of the Proposition we have that $u_{k \tau}''(0) = u_{k \tau}''(1)= 0 $ for each $k\in \mathbb{N}$, and $\tau \in (0,1)$. This property is noteworthy and also carries over to the Obstacle Gradient Flow, as discussed in Theorem \ref{thm:navier}. 
\end{remark}
\begin{remark}
If we omit the assumption $u_0''(0) = u_0''(1) = 0 $, the argument of the previous proof can not be repeated. The reason for that is that  \eqref{eq:4.25} shows that the second derivatives at the boundary are not necessarily maintained and this might generate contributuions to the term in \eqref{eq:4.44} which are not necessarily elements of $o(\tau)$. This however was crucial for the proof.
\end{remark}

\subsection{Some Qualitative Properties}

\begin{theorem}[Existence, Space Regularity and Navier Boundary Conditions] \label{thm:navier}

Let $\E,C,H$ be as in Definition \ref{def:navier}. Then for each $u_0 \in W^{3,\infty}(0,1) \cap W_0^{1,2}(0,1)$ such that $u_0''(0) = u_0''(1)=0$ there exists an Obstacle Gradient Flow $(u(t))_{t \geq 0} $ starting at $u_0$.  Additionally, for every $t> 0 $, $u(t), \dot{u}(t)  \in  W^{3, 2}(0,1)$  and $u(t)''(0) = u(t)''(1)= 0$. Furthermore,  $ \dot{u}(t)''(0) = \dot{u}(t)''(1)= 0$ for almost every $t> 0 $. Moreover, for each $T> 0 $
\begin{equation*}
\sup_{ t\in [0,T] } ||u(t)||_{W^{3,2}} < \infty , \quad  \mathrm{esssup}_{ t \in [0,T]} || \dot{u}(t) ||_{W^{3,2}} < \infty .  
\end{equation*}
\end{theorem}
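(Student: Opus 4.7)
The plan is to combine the existence framework already developed in Theorem \ref{thm:exisres} with the uniform estimates produced inside the proof of Proposition \ref{prop:existence}, and then to extract the additional regularity by passing to the limit in the discrete Euler--Lagrange identities. First, Proposition \ref{prop:3.4} verifies Assumptions \ref{ass:main} and \ref{ass:growth}, and the hypotheses on $u_0$ are exactly what Proposition \ref{prop:existence} requires to conclude $u_0 \in \mathcal{P}$. Hence Theorem \ref{thm:exisres} immediately delivers an Obstacle Gradient Flow with initial datum $u_0$, equipped with all the abstract properties derived in Sections 2 and 3.

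For the spatial regularity and Navier boundary conditions on $u(t)$, I would use the uniform bound $\sup_{k\tau_n \le T} \|u_{k\tau_n}\|_{W^{3,\infty}} \le C(T)$ from Proposition \ref{prop:existence} together with the identities $u_{k\tau_n}''(0) = u_{k\tau_n}''(1) = 0$ recorded in Remark \ref{rem:navidisrc}. Both properties are inherited by the constant and linear interpolations since they lie in the convex hull of the $u_{k\tau_n}$. By Arzelà--Ascoli and the compact embedding of $W^{3,\infty}(0,1)$ into $C^2([0,1])$, I would extract a subsequence along which $\overline{u}^{\tau_n}(t)$ converges in $C^2$ for every $t \in [0,T]$; by Corollary \ref{cor:const} the limit must agree with $u(t)$, which therefore satisfies both Navier conditions for every $t$. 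A weak-$\ast$ lower semicontinuity argument in $W^{3,\infty}$ then passes the uniform bound to $u$, giving $\sup_{t \in [0,T]} \|u(t)\|_{W^{3,2}} < \infty$.

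For $\dot{u}$ I would revisit the discrete Euler--Lagrange identity \eqref{eq:3.71}, which expresses $(u_{(k+1)\tau}''' - u_{k\tau}''')/\tau$ as a sum of $u_{(k+1)\tau}'''$, the multiplier distribution function $m_{k\tau}$, the constant $C_{k\tau}$ and bounded curvature terms. Each summand is uniformly bounded in $L^\infty(0,1)$: the $W^{3,\infty}$-bound controls the curvature contributions, $\|m_{k\tau}\|_\infty \le \mu_{k\tau}(0,1) \le D$ from Proposition \ref{prop:existence}, and $|C_{k\tau}|$ is bounded via \eqref{eq:Cktau}. Thus $\partial_t u^{\tau_n}$ is uniformly bounded in $L^\infty([0,T],W^{3,\infty})$; passing to a weak-$\ast$ limit and identifying it with the $W^{1,2}$-derivative already produced by Corollary \ref{cor:regu} yields $\dot{u} \in L^\infty([0,T],W^{3,\infty})$ with the claimed essential supremum bound.

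The remaining and, I expect, most delicate point is the boundary condition on $\dot{u}$. Because $u^{\tau_n}$ linearly interpolates between functions all having vanishing second derivative at $0$ and $1$, the map $t \mapsto (u^{\tau_n}(t))''(0)$ is identically zero, so $(\partial_t u^{\tau_n})(t)''(0) = 0$ for almost every $t$, and the same holds at $x = 1$. The trace $v \mapsto v''(0)$ is a bounded linear functional on $W^{3,2}(0,1)$, so for each $\chi \in L^2(0,T)$ the map $v \mapsto \int_0^T v(t)''(0) \chi(t) \, dt$ is bounded linear on $L^2([0,T], W^{3,2})$. Combining the $L^\infty$-bound in $W^{3,\infty}$ from the previous step with the weak $L^2$-convergence $\partial_t u^{\tau_n} \rightharpoonup \dot{u}$ in $L^2([0,T],H)$ already available, I would promote this to weak convergence in $L^2([0,T], W^{3,2})$ and conclude $\int_0^T \dot{u}(t)''(0) \chi(t) \, dt = 0$ for every $\chi$, whence $\dot{u}(t)''(0) = 0$ almost everywhere, and analogously at $x=1$. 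The technical heart of the argument is therefore this commutation of boundary traces with the weak time limit, which is made possible precisely by the strong $L^\infty_t W^{3,\infty}_x$-estimate extracted from \eqref{eq:3.71}.
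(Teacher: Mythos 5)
Your proposal is correct in substance, and its first half (existence via Proposition \ref{prop:3.4}, Proposition \ref{prop:existence} and Theorem \ref{thm:exisres}; the $W^{3,2}$-regularity, Navier conditions and uniform bound for $u(t)$ via the discrete $W^{3,\infty}$-bound, Remark \ref{rem:navidisrc}, Corollary \ref{cor:const} and the compact embedding into $C^2([0,1])$) is essentially the paper's argument. Where you genuinely diverge is the treatment of $\dot{u}$: the paper does not return to the discrete scheme there. Instead it argues at the continuous level: for a.e.\ $t$ the $(FVI)$ together with \cite{EvansGariepy} yields a finite Radon measure $\mu_t$ supported in $\{u(t)=\psi\}$, compactly contained in $(0,1)$; the Fubini device of \eqref{eq:vertfkt} turns it into a bounded density, giving $\left( \dot{u}(t)'' + 2 G(u(t)')^2 u(t)'' \right)' \in L^\infty$ with an esssup bound; the $W^{2,2}$-part of $\dot{u}$ is controlled by Corollary \ref{cor:boundder}; and $\dot{u}(t)''(0)=\dot{u}(t)''(1)=0$ follows by testing the $(FVI)$ with $\phi \in C_0^\infty([0,\delta))$, $\phi(0)=0$, $\phi'(0)\neq 0$, away from the contact set, using $u(t)''(0)=0$. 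Your route — uniform bounds on the discrete velocities from \eqref{eq:3.71} (with the bounds on $m_{k\tau}$, $C_{k\tau}$ and \eqref{eq:3.109}), weak compactness, identification of the limit with $\dot{u}$, and weak continuity of the trace $v\mapsto v''(0)$ — also works, and would even give $\dot u \in L^\infty((0,T),W^{3,\infty}(0,1))$; but two points need care. First, \eqref{eq:3.71} only controls the third derivative of the velocity, so the full $W^{3,2}$-bound needs lower-order input, e.g.\ the discrete energy-descent rate of Proposition \ref{prop:energdec} (or integrating up from $\partial_t u^{\tau_n}(t)\in W_0^{1,2}$ and $(\partial_t u^{\tau_n})(t)''(0)=0$). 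Second, the compactness step is cleaner in the reflexive space $L^2((0,T),W^{3,2}(0,1))$ than via weak-$\ast$ limits in the non-separable $L^\infty((0,T),W^{3,\infty}(0,1))$; the esssup bound then still passes to the limit because the set of $v$ with $\|v(t)\|_{W^{3,2}}\leq M$ for a.e.\ $t$ is convex and strongly closed, hence weakly closed, and the weak convergence $\partial_t u^{\tau_n} \rightharpoonup \dot u$ in $L^2((0,T),H)$, which you call ``already available'', should be justified by the uniform $L^2$-bound from the discrete energy decay plus integration by parts in time against the uniform convergence $u^{\tau_n}\to u$. The trade-off between the two routes: yours reuses the interior estimates of Proposition \ref{prop:existence} and requires limit identification, while the paper's works directly from the $(FVI)$ of the limit flow and simultaneously introduces the measure $\mu_t$ that is exploited again in Proposition \ref{prop:dynH'} and in Section 5.
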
 
\begin{proof}
The existence follows from Proposition \ref{prop:3.4}, Proposition \ref{prop:existence} and Theorem \ref{thm:exisres}. Fix $t> 0$. By Corollary \ref{cor:const} there exists $\tau_n \rightarrow 0 $  such that $\overline{u}^{\tau_n} ( t) \rightarrow u(t)$ in $H$. Because of the boundedness of $\overline{u}^{\tau_n} (t) $ in $W^{3,2}(0,1)$, see Proposition \ref{prop:existence}, we can extract a subsequence $\tau_{l_n} \rightarrow 0 $ such that $\overline{u}^{\tau_{l_n}}(t)  \rightharpoonup u(t) $ in $W^{3,2}(0,1)$. From this follows the $W^{3,2}$-regularity of $u(t)$. Since  $\sup_{t \in [0,T], n \in \mathbb{N}} ||\overline{u}^{\tau_n}(t)||_{W^{3,2}} < \infty$ by Proposition \ref{prop:existence} and the norm is weakly lower semicontinuous we find that $\sup_{t \in [0,T]} || u(t)||_{W^{3,2}} < \infty $.  Because of the compact embedding $W^{3,2}(0,1) \hookrightarrow C^2[0,1]$ and Remark \ref{rem:navidisrc} we obtain that 
\begin{equation}\label{eq:navierrand}
u(t) '' (0) = \lim_{n \rightarrow \infty} \overline{u}^{\tau_{l_n}}(t)''(0) = 0
\end{equation}
Analogously one shows that $u(t) ''(1) = 0 $. Now fix $t> 0 $  such that  $(FVI)$ in Definition \ref{def:coneflow} holds true. The $(FVI)$ and \cite[Corollary 1, Section 1.8]{EvansGariepy} imply that there is a Radon measure $\mu_t$ supported on $ \{ u(t) =  \psi \}$ such that for all $\phi \in C_0^\infty(0,1)$ 
\begin{align*}
& \int_0^1 \dot{u}(t)'' \phi'' \; \mathrm{d}x + 2 \int_0^1 G'(u(t)')^2 u(t) '' \phi'' \; \mathrm{d}x  \\   & \qquad + 2 \int_0^1 G'(u(t)')G''(u(t)') u(t) ''^2 \phi' \; \mathrm{d}x +2 \int_0^1 K'(u(t)') \phi' \; \mathrm{d}x = \int \phi \; \mathrm{d}\mu_t. 
\end{align*} 
Since $\{ u(t) = \psi \}$ is compactly contained in $(0,1)$ (as $\psi(0), \psi(1) < 0 $ and $u(t)(0) = u(t) (1) = 0$), we obtain that $\mu_t$ is finite and therefore, we can derive like in \eqref{eq:vertfkt} that there is $m_t \in L^\infty(0,1)$  such that for all $\phi \in C_0^\infty(0,1)$ 
\begin{align*}
& \int_0^1 \dot{u}(t)'' \phi'' \; \mathrm{d}x + 2 \int_0^1 G'(u(t)')^2 u(t) '' \phi'' \; \mathrm{d}x  \\   & \qquad + 2 \int_0^1 G'(u(t)')G''(u(t)') u(t) ''^2 \phi' \; \mathrm{d}x + 2 \int_0^1 K'(u(t)') \phi' \; \mathrm{d}x = \int_0^1 m_t \phi' \; \mathrm{d}x .
\end{align*} 
Proceeding similar to \eqref{eq:mkatau}, \eqref{eq:3.71} and \eqref{eq:u'''} one can derive that
\begin{equation*}
\left( \dot{u}(t)'' + 2 G(u(t)')^2 u(t) '' \right)' \in L^\infty(0,1)
\end{equation*}
and 
\begin{equation}\label{eq:esssup}
\mathrm{esssup}_{ t \in [0,T] } || \left( \dot{u}(t)'' + 2 G(u(t)')^2 u(t) '' \right)'  ||_{L^\infty(0,1)} < \infty . 
\end{equation}
Since $\sup_{t \in [0,T]} ||u(t)||_{W^{3,2}} < \infty$ we find from \eqref{eq:esssup}  that $\mathrm{esssup}_{t \in [0,T]} ||\dot{u}(t)'''||_{L^2} < \infty $. To bound $\mathrm{esssup}_{ t \in [0,T] }||\dot{u}(t)||_{W^{3,2}} $ it remains to bound $\mathrm{esssup}_{t\in[0,T]}||\dot{u}(t)||_{W^{2,2}}$. For this, one observes using the fact that  $\dot{u}(t) \in W_0^{1,2}(0,1)$ for each $t > 0 $ and Corollary \ref{cor:boundder}  
\begin{align*}
\mathrm{esssup}_{t \in [0,T] } ||\dot{u}(t)||_{W^{2,2}} & \leq  C \;  \mathrm{esssup}_{t \in [0,T]} || \dot{u}(t)||_H \leq C \sup_{t \in [0,T]}||\nabla\E(u(t))||_H \\ & \leq C \sup_{w \in B_{R_T}(0)} ||\nabla\E(w)|| < \infty, 
\end{align*}
where $R_T := ||u_0|| + \sqrt{2(\E(u_0) - \alpha ) } \sqrt{T}$, see \eqref{eq:2.16}. 
 Now let $\delta > 0 $ be such that $[0 ,\delta] \cap \mathrm{supp} (\mu_t)  = \emptyset$. Then the flow variational inequality implies that 
\begin{align*}
& \int_0^1 \dot{u}(t)'' \phi'' \; \mathrm{d}x + 2 \int_0^1 G'(u(t)')^2 u(t) '' \phi'' \; \mathrm{d}x  \\   & \qquad + 2 \int_0^1 G'(u(t)')G''(u(t)') u(t) ''^2 \phi' \; \mathrm{d}x + 2 \int_0^1 K'(u(t)') \phi' \; \mathrm{d}x = 0
\end{align*}
 for all $\phi \in C_0^\infty([0, \delta)) $ such that $\phi(0)= 0, \phi'(0) \neq 0 $. Similar to \eqref{eq:=0}, \eqref{eq:3.56}  and \eqref{eq:3.60} one can derive that 
 \begin{equation*}
 \dot{u}(t)''(0) + 2 G'(u(t)')(0))^2 u(t)''(0) = 0 , 
\end{equation*}  
and therefore using \eqref{eq:navierrand} one finds that $\dot{u}(t)''(0) = 0$. Similarly one can compute $\dot{u}(t)''(1) = 0 $, which finishes the proof.
\end{proof}
\begin{remark}\label{rem:secder}
For the time-independent problem, \cite[Theorem 5.1]{Anna} shows $W^{3,\infty}$-regularity of each solution of the time independent variational inequality. For the discrete trajectories, we have also shown $W^{3,\infty}$-regularity and $W^{3,\infty}$ boundedness independent of the stepwidth $k,\tau$, see \eqref{eq:3.109}. Since $W^{3,\infty}$ is not reflexive, the regularity does not immediately carry over to the limit. However note that $C^{2,1}([0,1]) = W^{3,\infty}(0,1) $ and $||f||_{C^{2,1}} =  || f||_{W^{3,\infty}} $  for all $f \in C^{2,1}([0,1])$, see \cite[Theorem 6.12 and Exercise 6.14]{Heinonen} for $'\subset', '\geq'$ and $'\supset', '\leq'$ are immediate using the fundamental theorem of calculus. Now fix $T > 0$ and $t \in [0,T]$. Then, according to the proof of Theorem \ref{thm:navier}, there is a sequence $\tau_n \rightarrow 0 $ such that $\overline{u}^{\tau_n}(t) \rightharpoonup u(t)$ in $W^{3,2}(0,1)$. Now observe that for arbitrary but fixed $x,y \in [0,1]$, since $W^{3,2}(0,1)$ embeds compactly in $C^2([0,1])$
\begin{align*}
|u(t)''(x) - u(t)''(y) | & =  \lim_{n \rightarrow \infty} | \overline{u}^{\tau_n} (t) ''(x) - \overline{u}^{\tau_n}(t) ''(y) | \leq \sup_{n \in \mathbb{N}} || \overline{u}^{\tau_n}(t) ||_{C^{2,1}} |x-y| \\ & = \sup_{n \in \mathbb{N}} || \overline{u}^{\tau_n}(t) ||_{W^{3,\infty}} |x-y| = \sup_{ \tau \in (0,1), k \in \mathbb{N}: k \tau \leq T } ||u_{k \tau}||_{W^{3,\infty}} |x-y| 
\end{align*}
and the supremum in the last step is finite because of \eqref{eq:3.109}. Therefore $u(t) \in C^{2,1} = W^{3, \infty}$ for each $t > 0 $. 
\end{remark} 
Later we will be interested in asymptotic behavior of solutions. For this it is vital to obtain estimates for the third derivative uniformly in time. The rest of this section will be dedicated to such estimates.

\begin{prop}[Measurability in Time]\label{prop:meas}

Suppose that $u_0 \in W^{3,\infty}(0,1) \cap H$ is such that $u_0''(0) = u_0''(1) = 0$. Let $(u(t))_{t \geq 0} $ be the Obstacle Gradient Flow with initial datum $u_0$. 
Let $T> 0$ be arbitrary. Then $(0, T ) \ni t \mapsto u(t)''' \in L^2(0,1)$ and $(0,T) \ni t \mapsto \dot{u}(t)''' \in L^2(0,1)$ are Bochner measurable maps. Moreover 
\begin{equation}
(0,T) \times (0,1) \ni (t,x) \mapsto u(t)'''(x) 
\end{equation}   
is measurable with respect to the product Lebesgue measure on $(0,T) \times (0,1)$. In particular, there is a set $N \subset (0,1)$ of measure zero such that for each $x \in (0,1) \setminus N$, $ (0,T) \ni t \mapsto u(t)'''(x) \in \mathbb{R}$ is measurable. Analogous statements hold for $\dot{u}(t)'''$. 
\end{prop}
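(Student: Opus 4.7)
The plan is to invoke Pettis's measurability theorem: since $L^2(0,1)$ is separable, Bochner measurability of a map into $L^2(0,1)$ is equivalent to weak measurability. So it suffices to show that for every $\phi \in L^2(0,1)$, the scalar maps $t \mapsto (u(t)''', \phi)_{L^2}$ and $t \mapsto (\dot u(t)''', \phi)_{L^2}$ are Lebesgue measurable on $(0,T)$. To pass from $C_c^\infty$-test functions to general $\phi \in L^2$, I would use the uniform $W^{3,2}$-bounds furnished by Theorem \ref{thm:navier} together with a density argument.

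For $u(t)'''$: Corollary \ref{cor:regu} gives $u \in W^{1,2}((0,T),H)$, hence $u \in C([0,T],H)$, so $t \mapsto u(t)''$ is continuous into $L^2(0,1)$. For $\phi \in C_c^\infty(0,1)$, integration by parts yields
\[
(u(t)''', \phi)_{L^2} = -(u(t)'', \phi')_{L^2},
\]
which is continuous in $t$, hence measurable. Theorem \ref{thm:navier} provides $M := \sup_{t \in [0,T]} \Vert u(t) \Vert_{W^{3,2}} < \infty$. For arbitrary $\phi \in L^2(0,1)$, I would choose $\phi_n \in C_c^\infty(0,1)$ with $\phi_n \to \phi$ in $L^2$, and estimate
\[
\sup_{t \in (0,T)} \bigl| (u(t)''', \phi - \phi_n)_{L^2} \bigr| \leq M \Vert \phi - \phi_n \Vert_{L^2} \to 0,
\]
so $t \mapsto (u(t)''', \phi)_{L^2}$ is a uniform limit of measurable functions, hence measurable.

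For $\dot u(t)'''$: The argument is entirely analogous, the only change being the source of bounds. Proposition \ref{prop:enrggdisp} together with Corollary \ref{cor:regu} gives Bochner measurability of $t \mapsto \dot u(t) \in H$, so $t \mapsto \dot u(t)''$ is measurable into $L^2(0,1)$; integration by parts against $\phi \in C_c^\infty$ yields a measurable scalar function. Theorem \ref{thm:navier} provides the essential supremum bound $\mathrm{esssup}_{t \in [0,T]} \Vert \dot u(t) \Vert_{W^{3,2}} < \infty$, which replaces $M$ in the density argument and extends measurability to all $\phi \in L^2$.

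For the joint measurability statement, Bochner measurability of $t \mapsto u(t)''' \in L^2(0,1)$ implies the existence of a sequence of countably-valued simple functions converging pointwise in $L^2$-norm to $u(\cdot)'''$; each such simple function corresponds to a jointly measurable function on $(0,T) \times (0,1)$, and joint measurability is preserved under pointwise a.e.\ limits (along a subsequence converging pointwise a.e.\ in $(t,x)$, which exists by a standard diagonal argument). Fubini's theorem then yields measurability of $t \mapsto u(t)'''(x)$ for a.e.\ $x$. The same recipe handles $\dot u$. The only mildly delicate point is the density/uniform-bound step, which is why the uniform $W^{3,2}$-estimates from Theorem \ref{thm:navier} are indispensable; once these are in hand, everything else is routine functional-analytic bookkeeping.
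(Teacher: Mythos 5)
Your proposal is correct and follows essentially the same route as the paper: Pettis' measurability theorem, integration by parts against $C_c^\infty(0,1)$ test functions, a density argument resting on the $W^{3,2}$-bounds of Theorem \ref{thm:navier}, and then product measurability plus Fubini for the pointwise-in-$x$ statement. The only difference is the joint-measurability step, where the paper simply invokes the isomorphism $L^2((0,T),L^2(0,1))\cong L^2((0,T)\times(0,1))$, whereas your simple-function/subsequence argument, as stated, would still need an Egorov-type step (norm convergence for a.e.\ fixed $t$ does not by itself give a single subsequence converging pointwise a.e.\ on the product), though this is standard and does not affect the conclusion.
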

\begin{proof}
For the Bochner measurability, we use the Pettis measurability Theorem, see \cite[Section V.4]{Yosida}. The map is separably valued since $L^2(0,1)$ is separable. We claim that it is weakly measurable, i.e. for each $g \in L^2(0,1)$ 
\begin{equation*}
(0,T) \ni t \mapsto \int_0^1 u(t)'''(x) g(x) \; \mathrm{d}x \in \mathbb{R}
\end{equation*}
is Lebesgue measurable. For this we assume first that $g \in C_0^\infty(0,1)$. In this case 
\begin{equation*}
\int_0^1 u(t)'''(x) g(x) \; \mathrm{d}x = - \int_0^1 u(t)(x) g'''(x) \; \mathrm{d}x , 
\end{equation*}
which is  measurable in $t$ as a continuous function in $t$. For arbitrary $g \in L^2(0,1)$ there is $(g_n) \subset C_0^\infty(0,1)$ such that  $g_n \rightarrow g $ in $L^2(0,1)$. 
An easy computation shows that 
\begin{equation}\label{eq:approx}
\int_0^1 u(t)'''(x) g_n(x) \; \mathrm{d}x \rightarrow \int_0^1 u(t)'''(x) g(x) \; \mathrm{d}x \quad (n  \rightarrow \infty) 
\end{equation}
pointwise in $(0,T)$. The claim follows since pointwise limits of measurable functions are measurable. Hence Pettis measurability Theorem applies and Bochner measurability follows. The fact that $(t,x ) \mapsto u(t)'''(x)$ is also product measurable follows from the fact that $L^2((0,T), L^2(0,1)) $ and $L^2((0,T) \times (0,1))$ are isomorphic, see \cite[bottom of p.14]{Arendt}. The rest of the claim follows using Fubini's Theorem.  For $\dot{u}(t)'''$ everything is analogous except for the weak measurability, on which we will elaborate shortly. Observe that for $g \in C_0^\infty(0,1)$ we find 
\begin{equation*}
\int_0^1 \dot{u}(t)'''(x) g(x) \; \mathrm{d}x = - \int_0^1 \dot{u}(t) '' (x) g'(x) \; \mathrm{d}x = - ( \dot{u}(t), g' ) .
\end{equation*}
This however is the weak derivative of $t \mapsto - (u(t) ,g')$ and as such measurable. For arbitrary $g \in L^2(0,1)$ an approximation argument similar to \eqref{eq:approx} applies. 
\end{proof}

\begin{remark}
Definition \ref{def:coneflow}, Theorem  \ref{thm:navier} and Proposition \ref{prop:meas} together imply that the constructed Obstacle Gradient Flow $u$ lies in $W^{1,2}((0,T), W^{2,2}(0,1)) \cap L^\infty((0,T), W^{3,2}(0,1))$ for each $t> 0 $ which embeds compactly in $C([0,T], W^{2,\infty}(0,1))$ because of the Aubin-Lions-Simon Lemma, see \cite[Corollary 5]{Simon}. Because of Remark \ref{rem:secder}, we also have that $u(t) \in C^2([0,1])$ for each $t > 0 $. Since the norms on $W^{2,\infty}(0,1)$ and $C^2([0,1])$ coincide on $C^2([0,1])$, we obtain that $u \in C([0,T], C^2([0,1]))$ for each $T > 0$, i.e. $C([0, \infty),C^2([0,1]))$. 
\end{remark}

\begin{cor}[Regularity in Space-Time] \label{cor:nullset}

Let $u_0 \in W^{3,\infty}(0,1) \cap H$ be such that $u_0''(0) = u_0''(1) = 0 $. Then there is a set $N \subset (0,1)$ of Lebesgue measure zero such that for each $T > 0$ and $x \in (0,1) \setminus N$,   $(0,T) \ni t \mapsto u(t)'''(x)$ is weakly differentiable on $(0,T)$ and  $\partial_t (u(t) '''(x)) = \dot{u}(t) '''(x) $ for almost every $t \in (0,T)$. 
\end{cor}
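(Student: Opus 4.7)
The plan is to derive the pointwise-in-$x$ integration-by-parts identity by testing against $\phi \in C_0^\infty(0,T)$ in $H$ and then localizing in $x$ via Fubini, using the product measurability from Proposition \ref{prop:meas}.

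First I would fix $T = m \in \mathbb{N}$ and collect the regularity: by Theorem \ref{thm:navier}, $t \mapsto u(t)'''$ and $t \mapsto \dot{u}(t)'''$ are essentially bounded in $L^2(0,1)$ on $[0,T]$, and by Proposition \ref{prop:meas} both maps are jointly measurable on $(0,T) \times (0,1)$. Hence by Fubini, outside an $x$-null set $N_T^{(1)}$, the slices $t \mapsto u(t)'''(x)$ and $t \mapsto \dot{u}(t)'''(x)$ lie in $L^2(0,T)$.

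Next I would exploit $u \in W^{1,2}((0,T), H)$. For any $g \in C_0^\infty(0,1)$ the functional $\Lambda_g(h) := -\int_0^1 h(x)\,g'''(x) \, \mathrm{d}x$ is bounded linear on $H$, and since $u(t),\dot{u}(t) \in W^{3,2}(0,1)$, integration by parts on $(0,1)$ gives $\Lambda_g(u(t)) = \int_0^1 u(t)'''(x) g(x) \, \mathrm{d}x$ and similarly for $\dot u(t)$. Applying $\Lambda_g$ to the vector-valued identity $\int_0^T u(t) \phi'(t) \, \mathrm{d}t = -\int_0^T \dot{u}(t) \phi(t) \, \mathrm{d}t$ in $H$ (valid for every $\phi \in C_0^\infty(0,T)$) and then swapping the order of integration by Fubini yields
\begin{equation*}
\int_0^1 g(x) \left[ \int_0^T u(t)'''(x) \phi'(t)\,\mathrm{d}t + \int_0^T \dot{u}(t)'''(x) \phi(t)\,\mathrm{d}t \right] \mathrm{d}x = 0.
\end{equation*}
Since $g \in C_0^\infty(0,1)$ is arbitrary, the bracket vanishes for a.e. $x$, with a null set $N_{T,\phi}$ depending on $\phi$.

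The main obstacle is removing the dependence of the null set on $\phi$. To this end I would pick a countable subset $\{\phi_n\}_{n\in\mathbb{N}} \subset C_0^\infty(0,T)$ that is dense in $C_0^\infty(0,T)$ for the $C^1$-topology, and set $N_T := N_T^{(1)} \cup \bigcup_n N_{T,\phi_n}$, still a null set. For $x \notin N_T$, both slices are in $L^2(0,T)$, so the maps $\phi \mapsto \int_0^T u(t)'''(x) \phi'(t) \, \mathrm{d}t$ and $\phi \mapsto \int_0^T \dot{u}(t)'''(x) \phi(t) \, \mathrm{d}t$ are continuous in the $C^1$-topology; by density the integration-by-parts identity extends from $\{\phi_n\}$ to all $\phi \in C_0^\infty(0,T)$. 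This means that on $(0,T)$ the function $t \mapsto u(t)'''(x)$ is weakly differentiable with weak derivative $t \mapsto \dot{u}(t)'''(x)$, and by uniqueness of weak derivatives the identity $\partial_t(u(t)'''(x)) = \dot{u}(t)'''(x)$ holds for a.e. $t$. Finally, taking $N := \bigcup_{m \in \mathbb{N}} N_m$ provides a single null set valid for every $T > 0$, which finishes the proof.
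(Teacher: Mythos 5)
Your argument is correct, and it reaches the conclusion by a genuinely different localization step than the paper. The first half is essentially the paper's intermediate claim in disguise: the paper shows that $h_\eta(t)=\int_0^1 u(t)'''(r)\eta(r)\,\mathrm{d}r$ is weakly differentiable in $t$ with derivative $\int_0^1 \dot u(t)'''(r)\eta(r)\,\mathrm{d}r$ (via the Riesz functionals $\delta_r$ and Fubini, see \eqref{eq:haeta}), while you obtain the same scalar identity by applying the bounded functional $\Lambda_g$ to the Bochner-integral formulation of $u\in W^{1,2}((0,T),H)$ and integrating by parts in $x$; taking $g\in C_0^\infty(0,1)$ even spares you the boundary terms. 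The divergence is in how the identity is localized at a single $x$: the paper convolves with mollifiers $\eta_n(x-\cdot)$, proves the $L^2((0,T)\times(0,1))$ convergences \eqref{eq:molli1}--\eqref{eq:molli2}, and extracts a subsequence so that $h_{\eta_{l_n}(x-\cdot)}$ converges in $W^{1,2}(0,T)$ for a.e.\ $x$; you instead fix the temporal test function $\phi$, swap the order of integration, invoke the fundamental lemma of the calculus of variations in $x$ to make the bracket vanish off a null set $N_{T,\phi}$, and then remove the $\phi$-dependence with a countable family dense in $C_0^\infty(0,T)$ for the $C^1$-norm (legitimate, since $C^1$-convergence suffices to pass to the limit against the $L^2(0,T)$ slices, whose membership in $L^2(0,T)$ for a.e.\ $x$ you correctly extract from Theorem \ref{thm:navier} and Proposition \ref{prop:meas} via Fubini--Tonelli). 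What your route buys is the avoidance of the mollifier estimates and the subsequence extraction, at the price of the (standard) separability/density argument; what the paper's route buys is slightly more information, namely genuine $W^{1,2}(0,T)$-approximation of the slice $t\mapsto u(t)'''(x)$ by smooth-in-$x$ averages, which is not needed for the statement. Both proofs rely on the same inputs and yield the same null set structure $N=\bigcup_{T\in\mathbb{N}}N_T$.
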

\begin{proof}First fix $T \in \mathbb{N}$. 
Let $\eta \in C^\infty(0,1)$ and define 
\begin{equation*}
h_\eta(t) := \int_0^1 u(t)'''(r) \eta(r) \; \mathrm{d}r .
\end{equation*}  
We show that $h_\eta$ is weakly differentiable in $(0,T)$ with derivative 
\begin{equation*}
\partial_t h_\eta(t) := \int_0^1 \dot{u}(t)'''(r) \eta(r) \; \mathrm{d}r.
\end{equation*}
Indeed, fix $\phi \in C_0^\infty(0,T)$. Using Fubini's Theorem - which is applicable since $u, \dot{u} \in L^\infty((0,T),W^{3,2}(0,1))$ - we find  
\begin{align*}
\int_0^T h_\eta(t) \partial_t \phi(t) \; \mathrm{d}t & = \int_0^T  \left( \int_0^1 u(t) '''(r) \eta(r) \; \mathrm{d}r \right)  \partial_t\phi(t)  \; \mathrm{d}t 
\\ & = - \int_0^T \left( \int_0^1 u(t)(r) \partial_r^3\eta(r) \; \mathrm{d}r \right) \partial_t\phi(t) \; \mathrm{d}t
\\ & =  - \int_0^1  \left( \int_0^T u(t)(r) \partial_t \phi(t) \; \mathrm{d}t \right)  \partial_r^3 \eta(r) \; \mathrm{d}r.
\end{align*}
Observe that by the Riesz-Frechét theorem for each $r  \in (0,1) $ there is $\delta_r \in H $ such that $v(r) = (v, \delta_r)$ for each $v \in H$. Therefore 
\begin{align*}
 \int_0^T u(t)(r) \partial_t \phi(t) \; \mathrm{d}t  & = \int_0^T (u(t), \delta_r) \partial_t \phi(t)\; \mathrm{d}t 
 = \int_0^T (u(t) , \partial_t (\phi(t) \delta_r ) )  \; \mathrm{d}t 
 \\ & = - \int_0^T ( \dot{u }(t) ,  \phi(t) \delta_r) \; \mathrm{d}t 
 = - \int_0^T \phi(t ) ( \dot{u}(t) , \delta_r ) \; \mathrm{d}t 
\\&  = - \int_0^T \phi(t) \dot{u}(t)(r) \; \mathrm{d}t.   
\end{align*}
We conclude using Fubini's Theorem again
\begin{align}\label{eq:haeta}
\int_0^T h_\eta(t) \partial_t \phi(t) \; \mathrm{d}t & = \int_0^1 \left( \int_0^T \dot{u}(t)(r) \phi(t) \; \mathrm{d}t \right) \partial_r^3 \eta(r) \; \mathrm{d}r \nonumber
\\ & = \int_0^T \phi(t) \left(  \int_0^1 \dot{u}(t)(r) \partial_r^3 \eta(r) \; \mathrm{d}r \right) \; \mathrm{d}t \nonumber
\\ & = - \int_0^T \phi(t) \left( \int_0^1 \dot{u}(t)'''(r) \eta(r) \; \mathrm{d}r \right) \; \mathrm{d}t.  
\end{align}
The intermediate claim follows. Now let $(\eta_n)_{n \in \mathbb{N}}$ be a sequence of standard mollifiers.
 We claim that 
\begin{equation}\label{eq:molli1}
\int_0^1 \int_0^T |h_{\eta_n(x- \cdot)} (t) - u(t)'''(x) |^2 \; \mathrm{d}t \; \mathrm{d}x  \rightarrow 0 \quad (n \rightarrow \infty) 
\end{equation}
and 
\begin{equation}\label{eq:molli2}
\int_0^1 \int_0^T |\partial_t h_{\eta_n(x- \cdot)} (t) - \dot{u}(t)'''(x) |^2 \; \mathrm{d}t \; \mathrm{d}x  \rightarrow 0 \quad (n \rightarrow \infty). 
\end{equation}
From this follows by \cite[Section 1.3, Theorem 5]{EvansGariepy} that there is a subsequence $(\eta_{l_n})_{n \in \mathbb{N}}$ such that for almost every $x \in (0,1)$ 
\begin{equation*}
  \int_0^T |h_{\eta_{l_n}(x- \cdot)} (t) - u(t)'''(x) |^2 \; \mathrm{d}t  \rightarrow 0 \quad (n \rightarrow \infty) 
\end{equation*}
and 
\begin{equation*}
\int_0^T |\partial_t h_{\eta_{l_n}(x- \cdot)} (t) - \dot{u}(t)'''(x) |^2 \; \mathrm{d}t  \rightarrow 0 \quad (n \rightarrow \infty).
\end{equation*}
Once this is shown, we can infer that there exists a null set $N_T$ such that for $x \in (0,1) \setminus N_T$, $h_{\eta_{l_n}(x- \cdot)}$ is convergent in $W^{1,2}(0,T)$ and the limit coincides with $u(\cdot)'''(x)$. Moreover, the weak derivative of the limit corresponds to the $L^2(0,T)$-limits of $\partial_t h_{\eta_{l_n}(x-\cdot)}$, i.e. $\partial_t [u(t)'''(x)] = \dot{u}(t)'''(x)$ for almost every $t \in (0,T)$. Choosing $N := \bigcup_{T \in \mathbb{N}} N_T$ we find that the claim is really shown once \eqref{eq:molli1} and \eqref{eq:molli2} are verified. To verify $\eqref{eq:molli1}$ we use Fubini-Tonelli's Theorem: 
\begin{align}\label{eq:gegennull}
\int_0^1 \int_0^T | h_{\eta_n(x- \cdot)} (t ) - u(t) ''' (x) &|^2 \; \mathrm{d}t \; \mathrm{d}x \nonumber \\ & = \int_0^T \int_0^1 \left\vert \int_0^1 \eta_n(x-r) u(t)'''(r)  \; \mathrm{d}r- u(t) '''(x) \right\vert^2 \; \mathrm{d}x \; \mathrm{d}t \nonumber 
\\ & = \int_0^T || u(t) ''' * \eta_n - u(t)'''||^2_{L^2(0,1)} \; \mathrm{d}t 
\end{align}
Since $u(t) ''' $ is an $L^2$-function and $(\eta_n)_{n \in \mathbb{N}}$ is a sequence of standard mollifiers, the integrand goes to zero pointwise as $n \rightarrow \infty$. Note also that the integrand is uniformly bounded in $t$ since for each $t \in (0,T)$ we can estimate, using \cite[Proof of Theorem 6, Appendix C]{Evans},
\begin{equation*}
||u(t)'''* \eta_n - u(t) '''||_{L^2} \leq 2 ||u(t)'''||_{L^2} \leq 2 \; \mathrm{esssup}_{a \in [0,T]} ||u(a)'''||_{L^2} < \infty,
\end{equation*}
since $u(t) \in L^\infty((0,T), W^{3,2}(0,1))$ by Proposition \ref{prop:existence}. By the dominated convergence theorem, we conclude that the expression in \eqref{eq:gegennull} tends to zero as $n \rightarrow \infty$. Finally, \eqref{eq:molli2} can be verified with similar techniques applying the formula for $\partial_t h_\eta$  found in \eqref{eq:haeta}. 
\end{proof}
\begin{prop}[Regularity of the Gradient and Dynamics in $H'$]\label{prop:dynH'}

Let $u_0  \in W^{3,\infty}([0,1]) \cap H$ be such that $u_0''(0) = u_0''(1)= 0$. Let $(u(t))_{t \geq 0 }$ be the Obstacle Gradient Flow with initial data $u_0$. Then 
\begin{equation*}
\nabla \E(u(t) ) \in W^{3,2}(0,1) , \nabla\E(u(t))''(0) = \nabla \E(u(t))''(1) = 0 \quad \mathrm{a.e.} \; t > 0 
\end{equation*} 
and for almost every $t > 0$ there exists  a finite Radon measure $\mu_t $ on $(0,1)$ such that   
 for all $\phi \in W_0^{1,2}(0,1) \cap W^{2,2}(0,1)$ 
\begin{equation*}
(\dot{u}(t) , \phi) + ( \nabla \E(u(t)), \phi ) = \int \phi \; \mathrm{d}\mu_t  .
\end{equation*} 
Additionally, for each $t> 0 $ there exists a null set $N_t$ such that for each $x \in (0,1)\setminus N_t$    
\begin{equation}\label{eq:gradientdarst}
\nabla \E(u(t)) '''(x) = - \dot{u}(t)'''(x) + \int_0^1 \mu_t([s,1]) \; \mathrm{d}s - \mu_t([x,1]). 
\end{equation}
\end{prop}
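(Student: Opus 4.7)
The plan is to fix an a.e. $t>0$ at which $(FVI)$ holds and at which the $W^{3,2}$-regularity and Navier boundary condition for $\dot u(t)$ from Theorem \ref{thm:navier} are available. Set $g:=\nabla\E(u(t))$ and $h:=\dot u(t)+g$. The proof of Theorem \ref{thm:navier} already extracts (by testing $(FVI)$ against $v=u(t)+\epsilon\phi$ with $\phi\in C_0^\infty(0,1)$, $\phi\geq 0$) a positive Radon measure $\mu_t$ supported on $\{u(t)=\psi\}$, satisfying $(\dot u(t),\phi)+(g,\phi)=\int\phi\,\mathrm{d}\mu_t$ for $\phi\in C_0^\infty(0,1)$. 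Since $\psi(0),\psi(1)<0=u(t)(0)=u(t)(1)$, this support is compactly contained in $(0,1)$ and $\mu_t$ is finite. Writing $m_t(s):=\mu_t([s,1])$ and invoking Fubini as in \eqref{eq:vertfkt}, the identity becomes
\[\int_0^1 h''\phi''\,\mathrm{d}x=\int_0^1 m_t(s)\phi'(s)\,\mathrm{d}s \quad \text{for every } \phi\in C_0^\infty(0,1).\]

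Since $m_t\in L^\infty(0,1)$, \cite[Lemma 13.1]{Hestenes} then forces $h''\in W^{1,\infty}(0,1)$ with a constant $\tilde c_t$ satisfying $(h'')'(x)=-m_t(x)+\tilde c_t$ a.e. Using $\dot u(t)\in W^{3,2}(0,1)$ from Theorem \ref{thm:navier}, this gives $g\in W^{3,2}(0,1)$ with the a.e. identity $g'''(x)=-\dot u(t)'''(x)-\mu_t([x,1])+\tilde c_t$. To obtain $g''(0)=0$, choose $\delta>0$ with $u(t)>\psi$ on $[0,\delta]$ (possible by continuity since $u(t)(0)=0>\psi(0)$). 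For any $\phi\in C^\infty([0,1])$ supported in $[0,\delta]$ with $\phi(0)=0$ and $|\epsilon|$ sufficiently small, $u(t)\pm\epsilon\phi\in C$, so $(FVI)$ becomes an equality and yields $\int_0^\delta h''\phi''\,\mathrm{d}x=0$. Integrating by parts and using that $m_t$ is constant on $[0,\delta]$ (since $\mathrm{supp}(\mu_t)\cap[0,\delta]=\emptyset$), the interior integral $\int_0^\delta(-m_t+\tilde c_t)\phi'\,\mathrm{d}x=(-m_t|_{[0,\delta]}+\tilde c_t)(\phi(\delta)-\phi(0))$ vanishes, leaving $-h''(0)\phi'(0)=0$. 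Since $\phi'(0)$ is arbitrary, $h''(0)=0$, whence $g''(0)=-\dot u(t)''(0)=0$ by Theorem \ref{thm:navier}. The case $g''(1)=0$ is analogous.

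With $h''(0)=h''(1)=0$ in hand, integrating by parts in $\int_0^1 h''\phi''\,\mathrm{d}x$ for an arbitrary $\phi\in W^{2,2}(0,1)\cap W_0^{1,2}(0,1)$ produces no boundary terms, and the same Fubini manipulation as above converts the resulting interior integral into $\int\phi\,\mathrm{d}\mu_t$; this extends $(\dot u(t),\phi)+(g,\phi)=\int\phi\,\mathrm{d}\mu_t$ from $C_0^\infty(0,1)$ to all of $H$, settling the second assertion. To identify $\tilde c_t$, integrate the a.e. formula for $g'''$ over $[0,1]$: the left side is $g''(1)-g''(0)=0$, while the right side equals $-(\dot u(t)''(1)-\dot u(t)''(0))-\int_0^1\mu_t([s,1])\,\mathrm{d}s+\tilde c_t=\tilde c_t-\int_0^1\mu_t([s,1])\,\mathrm{d}s$ by Theorem \ref{thm:navier}. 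Hence $\tilde c_t=\int_0^1\mu_t([s,1])\,\mathrm{d}s$, which yields \eqref{eq:gradientdarst}. The main obstacle is the interlocked derivation of the Navier conditions for $g$ and the identification of $\tilde c_t$: both rely critically on the support of $\mu_t$ being separated from the boundary and on the Navier conditions for $\dot u(t)$ already secured by Theorem \ref{thm:navier}.
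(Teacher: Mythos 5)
Your proposal is correct and follows essentially the same route as the paper: extract the Radon measure $\mu_t$ from $(FVI)$, rewrite via the Fubini identity with $m_t=\mu_t([\cdot,1])$, apply the Hestenes lemma to get $W^{3,2}$-regularity and the a.e. equation with a constant, derive the Navier conditions for $\nabla\E(u(t))$ by testing near the boundary (using $\dot u(t)''(0)=\dot u(t)''(1)=0$), identify the constant by integrating over $(0,1)$, and extend the identity to all of $W^{2,2}\cap W_0^{1,2}$ by integration by parts. Working with $h=\dot u(t)+\nabla\E(u(t))$ instead of isolating the gradient is only a cosmetic difference.
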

\begin{proof} Let $t > 0$ be such that $(FVI)$ holds.  
 First one can infer from the $(FVI)$ and \cite[Corollary 1, Section 1.8]{EvansGariepy} that there exists a Radon measure $\mu_t$ supported on $\{u(t) = \psi \}$ on $(0,1)$ such that for all $\phi \in C_0^\infty(0,1)$  
 \begin{equation}\label{eq:mass}
 (\dot{u}(t) , \phi) + ( \nabla \E(u(t)), \phi) = \int \phi \; \mathrm{d}\mu_t .  
 \end{equation}
 Notice that $\mu_t $ is finite since $u(t) (0) = 0 > \psi(0)$ and $u(t)(1) = 0 > \psi(1)$ and therefore $\mathrm{supp}(\mu_t)$ is compactly contained in $(0,1)$ because of continuity of $u(t) - \psi$. Hence we can derive like in \eqref{eq:vertfkt} that there is $m_t := \mu_t([\cdot,1]) \in L^\infty(0,1)$ such that for all $\phi \in C_0^\infty(0,1)$ 
 \begin{equation*}
(\dot{u}(t) , \phi) + ( \nabla \E(u(t)), \phi)  = \int \phi' m_t \; \mathrm{d}x , 
 \end{equation*} more explicitly 
 \begin{equation*}
\int_0^1 \dot{u}(t)'' \phi'' \; \mathrm{d}x + \int_0^1 \nabla \E(u(t)) '' \phi'' \; \mathrm{d}x =  \int \phi' m_t \; \mathrm{d}x . 
 \end{equation*}
 Integrating by parts (which is justified by Proposition \ref{prop:existence}) we obtain for all $\phi \in C_0^\infty(0,1)$ 
 \begin{equation*}
 \int_0^1 \nabla \E(u(t))'' \phi'' \; \mathrm{d}x = \int_0^1 \phi' m_t \; \mathrm{d}x + \int_0^1 \dot{u}(t)''' \phi' \; \mathrm{d}x . 
 \end{equation*}
This implies, see \cite[Lemma 13.1]{Hestenes} that $\nabla \E(u(t)) \in W^{3,2}(0,1)$ and 
\begin{equation}\label{eq:Ct} 
\nabla\E(u(t)) ''' = C_t - m_t - \dot{u}(t)''' 
\end{equation}
for some $C_t \in \mathbb{R}$.
Because of continuity there exists  $\delta > 0 $ such that $u(t) > \psi$ on $[0, \delta]$. Therefore,  one has $\nabla \E(u(t))''' = C_t - \mu_t((0,1)) - \dot{u}(t) '''$ on $[0 , \delta ]$. The $(FVI)$ implies that for each $\phi \in C_0^\infty([0, \delta))$ such that $\phi(0) = 0, \phi'(0) \neq 0 $ 
 \begin{equation*}
 \int_0^1 \dot{u}(t)'' \phi'' \; \mathrm{d}x + \int_0^1 \nabla \E(u(t))'' \phi'' \; \mathrm{d}x = 0.
 \end{equation*}
 Integrating by parts  and using that $\dot{u}(t)''(0) =0 $ one finds that 
 \begin{equation*}
 \phi'(0) \nabla \E(u(t))''(0) =  \int_0^1 ( \dot{u}(t)''' + \nabla\E(u(t))''') \phi' \; \mathrm{d}x = \int_0^1 (C_t - \mu_t(0,1)) \phi' \; \mathrm{d}x = 0 
 \end{equation*}
We infer that $\nabla\E(u(t))''(0) = 0$. Similarly one proves that $\nabla \E(u(t))''(1) =0$. Using this and integrating over \eqref{eq:Ct} one obtains 
\begin{equation*}
 0 = C_t - \int_0^1 m_t \; \mathrm{d}s \quad \Rightarrow \quad 
C_t = \int_0^1 \mu_t([s,1] ) \; \mathrm{d}s.
\end{equation*}
This proves \eqref{eq:gradientdarst}. Integrating by parts in \eqref{eq:mass} one finds that for all $\phi \in C_0^\infty(0,1)$ 
\begin{equation}\label{eq:formel}
 - \int_0^1 \dot{u}(t)''' \phi' - \int_0^1 \nabla\E(u(t))''' \phi' = \int \phi \; \mathrm{d}\mu_t.
\end{equation}
By density, the same formula holds true for $\phi \in W_0^{1,2}(0,1)$. If we fix $\phi \in W^{2,2}(0,1) \cap W_0^{1,2}(0,1)$ then we can use integration by parts again in \eqref{eq:formel} and  obtain with $\dot{u}(t)''(0) = \dot{u}(t)''(1) = \nabla \E(u(t))''(0) = \nabla \E(u(t))''(1) = 0$ that
\begin{equation*}
\int_0^1 \dot{u}(t)'' \phi'' + \int_0^1 \nabla \E(u(t)) '' \phi'' = \int \phi \; \mathrm{d}\mu_t,
\end{equation*}
as claimed in the statement. 
\end{proof}

\section{Application: Elastic Flow with obstacle constraint} 
In this section we examine long-time behavior of the gradient flow of the elastic energy given in \eqref{eq:elasenerg} in the same framework as in Definition \ref{def:navier}. To do so, we will need to assume slightly stronger conditions on the obstacle, namely that it is $C^1$ in neighborhoods of $0$ and $1$. The main theorem predicts two possible behaviors for the flow for large times: Either one has convergence to a critical point in the sense of a solution of the time-independent variational inequality or one has blow-up of the $L^\infty$-norm of the first derivative. If the obstacle is too large the second case can actually occur, as we will show.

\subsection{Classification of the Asymptotic Behavior}

%
%
\begin{remark} 
The elastic energy, which will be denoted by $\E$ in this section, is one of the energies studied in Section \ref{Sec:Navier}, see Definition \ref{def:navier} with 
\begin{equation}\label{eq:G}
K \equiv  0, \quad G(z) := \int_0^z \frac{1}{(1+w^2)^\frac{5}{4}} \; \mathrm{d}w.
\end{equation}
Clearly, $K$ and $G$ satisfy the assumptions in Definition \ref{def:navier} and therefore we can use the results of Section \ref{Sec:Navier}. Recall in particular that $H$ and $C$ are given by Definition \ref{def:navier}.
\end{remark}
\begin{prop}[Gradient Formula and Estimate]
For each $u, \phi \in H$  one has  
\begin{equation}\label{eq:gradela}
(\nabla\E(u), \phi) = 2 \int_0^1 \frac{ u''\phi''}{(1+ u'^2)^\frac{5}{2}} \; \mathrm{d}x - 5 \int_0^1 \frac{u''^2 u' \phi'}{(1+ u'^2)^\frac{7}{2}} \; \mathrm{d}x.
\end{equation}
In particular, 
\begin{equation}\label{eq:gradnorm}
||\nabla \E(u)|| \leq \left(1 + \frac{5}{2}C_P \right) \E(u) + 1,
\end{equation}
where $C_P$ denotes the operator norm of the embedding $ W^{2,2}(0,1) \cap W_0^{1,2}(0,1) \hookrightarrow W^{1,\infty}(0,1) \cap W_0^{1,2}(0,1)$. 
\end{prop}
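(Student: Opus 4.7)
The plan is to deduce both assertions directly from Proposition \ref{prop:3.4}, which already gives the Fréchet derivative in integral form for the class of energies considered in Definition \ref{def:navier}. Since the elastic energy corresponds to $K \equiv 0$ and $G(z) = \int_0^z (1+w^2)^{-5/4}\,dw$, I would first record the elementary derivatives $G'(z) = (1+z^2)^{-5/4}$, whence $G'(z)^2 = (1+z^2)^{-5/2}$, and
\[
2G'(z)G''(z) = \frac{d}{dz}G'(z)^2 = -\frac{5z}{(1+z^2)^{7/2}}.
\]
Substituting into formula \eqref{eq:frech} and using $(\nabla\E(u),\phi) = D\E(u)(\phi)$ yields exactly \eqref{eq:gradela}.

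For the estimate I would use the duality characterization $\|\nabla\E(u)\| = \sup_{\|\phi\|=1}(\nabla\E(u),\phi)$ and bound the two terms of \eqref{eq:gradela} separately. The first term is handled by a weighted Cauchy--Schwarz inequality with weight $(1+u'^2)^{-5/2}$:
\[
2\int_0^1 \frac{|u''||\phi''|}{(1+u'^2)^{5/2}}\,dx \leq 2\left(\int_0^1 \frac{u''^2}{(1+u'^2)^{5/2}}\,dx\right)^{\!\!1/2}\!\!\left(\int_0^1 \frac{\phi''^2}{(1+u'^2)^{5/2}}\,dx\right)^{\!\!1/2} \leq 2\sqrt{\E(u)}\,\|\phi\|,
\]
where the trivial bound $(1+u'^2)^{-5/2} \leq 1$ was used in the second factor. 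Finally, the inequality $2\sqrt{t} \leq t+1$ upgrades this to $(\E(u)+1)\|\phi\|$.

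For the second term, the key observation is the elementary bound $\frac{|a|}{1+a^2} \leq \tfrac{1}{2}$ (equivalent to $(|a|-1)^2 \geq 0$) applied with $a = u'$, which yields
\[
\frac{|u'|}{(1+u'^2)^{7/2}} = \frac{|u'|}{1+u'^2}\cdot\frac{1}{(1+u'^2)^{5/2}} \leq \frac{1}{2(1+u'^2)^{5/2}}.
\]
Combined with $|\phi'| \leq \|\phi'\|_\infty \leq C_P\|\phi\|$, this gives
\[
5\int_0^1 \frac{u''^2|u'||\phi'|}{(1+u'^2)^{7/2}}\,dx \leq \frac{5}{2}C_P\|\phi\|\,\E(u).
\]
Adding the two estimates produces the claimed inequality. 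I do not anticipate any obstacle beyond spotting the sharp bound $\frac{|a|}{1+a^2} \leq \tfrac{1}{2}$, which is what allows the factor $\tfrac{5}{2}$ rather than the weaker $5$ one would get from $\frac{|a|}{(1+a^2)^{1/2}} \leq 1$.
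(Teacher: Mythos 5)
Your proof is correct and takes essentially the same route as the paper: the pointwise bound $|u'|/(1+u'^2)\le\tfrac12$ combined with $\|\phi'\|_\infty\le C_P\|\phi\|$ is exactly the paper's treatment of the second term, and your weighted Cauchy--Schwarz plus $2\sqrt{t}\le t+1$ is just a repackaging of the paper's pointwise $2ab\le a^2+b^2$ for the first term. The only cosmetic difference is that you obtain \eqref{eq:gradela} by specializing \eqref{eq:frech} to $K\equiv 0$ and $G$ from \eqref{eq:G}, whereas the paper cites the computation in the reference on elastic graphs; both are valid.
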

\begin{proof}
The formula for the gradient \eqref{eq:gradela} can be shown by a very easy computation, see \cite[Equation 1.5]{Anna}. Using $2ab \leq a^2 + b^2$ for each $a,b \in \mathbb{R}$ we find
\begin{align*}
||\nabla\E(u)|| & = \sup_{ \phi \in H, ||\phi|| \leq 1 } ( \nabla \E(u) , \phi) \\ 
& = \sup_{||\phi||\leq 1} \int_0^1 \frac{2 u'' \phi''}{(1+u'^2)^\frac{5}{2}} \; \mathrm{d}x - 5 \int_0^1 \frac{u''^2 u' \phi'}{(1 + u'^2)^\frac{7}{2}} \; \mathrm{d}x
\\ & \leq \sup_{|| \phi|| \leq 1 } \int_0^1 \frac{u''^2 + \phi''^2}{(1+ u'^2)^\frac{5}{2}} \; \mathrm{d}x + \frac{5}{2}||\phi'||_\infty \int_0^1 \frac{u''^2 (1+u'^2)}{(1+ u'^2)^\frac{7}{2}}  \; \mathrm{d}x
\\ & \leq \sup_{||\phi|| \leq 1 }  \left( 1 +  \frac{5}{2} ||\phi'||_\infty \right) \int_0^1 \frac{u''^2}{(1+ u'^2)^\frac{5}{2}} \; \mathrm{d}x  + \int_0^1 \phi''^2 \; \mathrm{d}x  
\end{align*}
Estimating $||\phi'||_\infty $ by $C_P || \phi ||$ and using $\int_0^1 \phi''^2 \; \mathrm{d}x = ||\phi||^2 \leq 1$ one obtains the claim.  
\end{proof}
\begin{prop}[Evolution of Third Derivatives]
Let $H,C, \psi$ be as in Definition \ref{def:navier} and $\E$ be given by \eqref{eq:elasenerg}. Let $u_0 \in W^{3,\infty}(0,1) \cap H$ be such that $u_0''(0) = u_0''(1) = 0 $ and let $(u(t))_{ t \geq 0 } $ be the Obstacle Gradient Flow for $C$ starting at $u_0$. Then 
\begin{equation}\label{eq:thirdder}
\nabla \E(u(t) ) '''(x) = \frac{2 u(t) '''(x) }{(1 + u(t)'(x)^2)^\frac{5}{2}} - 5 \frac{u(t)''(x)^2 u(t)'(x)}{(1+ u(t)'(x)^2)^\frac{7}{2}} - 5 \int_0^1 \frac{u(t)''(r)^2 u(t)'(r)}{(1+ u(t)'(r)^2)^\frac{7}{2}} \; \mathrm{d}r 
\end{equation}  
for almost every $t>0 $. 
\end{prop}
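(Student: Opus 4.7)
The plan is to exploit the explicit formula \eqref{eq:gradela} for $(\nabla\E(u),\phi)$ together with the Navier boundary conditions both for $u(t)$ (from Theorem \ref{thm:navier}) and for $\nabla\E(u(t))$ (from Proposition \ref{prop:dynH'}), and to turn the identity into a pointwise equation via two integrations by parts. Fix $t>0$ at which both sets of Navier conditions hold and, for brevity, drop the $t$. Let $A(x):=\frac{2u''(x)}{(1+u'(x)^2)^{5/2}}$ and $B(x):=\frac{5\,u''(x)^2 u'(x)}{(1+u'(x)^2)^{7/2}}$, and note that since $u\in W^{3,\infty}(0,1)$ by Theorem \ref{thm:navier} (combined with Remark \ref{rem:secder}), both $A$ and $B$ are in $W^{1,\infty}(0,1)$ with the classical derivative $A'=\frac{2u'''}{(1+u'^2)^{5/2}}-\frac{10\,u''^2 u'}{(1+u'^2)^{7/2}}$ via the chain rule.

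Next, I take any $\phi\in C^\infty([0,1])\cap W_0^{1,2}(0,1)$ and evaluate $(\nabla\E(u),\phi)=\int_0^1(\nabla\E(u))''\phi''\,\mathrm{d}x$ in two ways. On the one hand, since $\nabla\E(u)\in W^{3,2}(0,1)$ with $(\nabla\E(u))''(0)=(\nabla\E(u))''(1)=0$ by Proposition \ref{prop:dynH'}, an integration by parts gives $\int_0^1 (\nabla\E(u))''\phi''\,\mathrm{d}x=-\int_0^1 (\nabla\E(u))'''\phi'\,\mathrm{d}x$. On the other hand, starting from \eqref{eq:gradela} and integrating by parts in the first summand using the Navier condition $u''(0)=u''(1)=0$ from Theorem \ref{thm:navier}, the boundary term $[A\phi']_0^1$ vanishes and I obtain $(\nabla\E(u),\phi)=-\int_0^1 A'\phi'\,\mathrm{d}x-\int_0^1 B\phi'\,\mathrm{d}x$. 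Equating the two expressions yields
\begin{equation*}
\int_0^1\bigl[(\nabla\E(u))'''-A'-B\bigr]\phi'\,\mathrm{d}x=0
\end{equation*}
for every admissible $\phi$. As $\phi$ ranges over $C^\infty([0,1])\cap W_0^{1,2}(0,1)$, the derivative $\phi'$ ranges over a dense subset of the mean-zero functions in $L^2(0,1)$, whose $L^2$-orthogonal complement consists precisely of the constants. Hence there is a constant $C\in\mathbb{R}$ such that $(\nabla\E(u))'''=A'+B+C$ almost everywhere; combining the explicit expressions gives $(\nabla\E(u))'''=\frac{2u'''}{(1+u'^2)^{5/2}}-\frac{5u''^2u'}{(1+u'^2)^{7/2}}+C$.

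Finally, I identify $C$ by integrating over $(0,1)$. The left-hand side integrates to $(\nabla\E(u))''(1)-(\nabla\E(u))''(0)=0$ by the Navier boundary conditions on $\nabla\E(u)$. For the right-hand side, the fundamental theorem applied to $A\in W^{1,\infty}$ together with $A(0)=A(1)=0$ (from $u''(0)=u''(1)=0$) gives $\int_0^1\frac{2u'''}{(1+u'^2)^{5/2}}\,\mathrm{d}x=\int_0^1\frac{10u''^2 u'}{(1+u'^2)^{7/2}}\,\mathrm{d}x$, so summing yields $C=-5\int_0^1\frac{u''^2u'}{(1+u'^2)^{7/2}}\,\mathrm{d}x$, which is exactly the constant appearing in \eqref{eq:thirdder}.

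The main obstacle is not any single calculation but rather tracking that the regularity of $u(t)$ and $\nabla\E(u(t))$ is sufficient to justify both integrations by parts with vanishing boundary terms; this is precisely what Theorem \ref{thm:navier} and Proposition \ref{prop:dynH'} provide (for a.e.\ $t$), so the argument reduces to a careful bookkeeping of the chain rule and the boundary data.
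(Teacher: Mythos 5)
Your proposal is correct and follows essentially the same route as the paper: integrate by parts in the gradient formula \eqref{eq:gradela} using $u(t)''(0)=u(t)''(1)=0$, deduce that $\nabla\E(u(t))'''$ equals the pointwise expression up to an additive constant, and then identify that constant by integrating over $(0,1)$ and using the Navier conditions on $\nabla\E(u(t))''$ from Proposition \ref{prop:dynH'} together with those on $u(t)''$. The only cosmetic difference is that you obtain the constant via a density/orthogonality argument with test functions in $C^\infty([0,1])\cap W_0^{1,2}(0,1)$, whereas the paper uses $C_0^\infty(0,1)$ test functions and the du Bois--Reymond type lemma; both are valid.
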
 
\begin{proof}

If $\phi \in C_0^\infty(0,1)$ and  $u \in W^{3,2}(0,1)\cap H $ is fixed such that $u''(0) = u''(1) = 0 $ one can integrate by parts in \eqref{eq:gradela}
\begin{equation*}
\int_0^1 \nabla \E(u)'' \phi'' \; \mathrm{d}x = - \int_0^1 \frac{2u''' \phi'}{(1+u'^2)^\frac{5}{2}} \; \mathrm{d}x + 5 \int_0^1 \frac{u''^2 u' \phi'}{(1+ u'^2)^\frac{7}{2}} \; \mathrm{d}x.
\end{equation*} 
Therefore, for each $t> 0$ there exists some $C_t \in \mathbb{R}$
\begin{equation*}
\nabla \E(u(t))''' = C_t + 2 \frac{u(t)'''}{(1+u(t)'^2)^\frac{5}{2}}- 5 \frac{u(t)''^2 u(t)' }{(1+ u(t)'^2)^\frac{7}{2}}  .
\end{equation*}
Now note that $\nabla \E(u(t))'' (0) = \nabla \E(u(t)) '' (1) = 0 $ for almost every $t>0$ and for such $t$ one has 
\begin{align*}
0 & = C_t + 2 \int_0^1 \frac{u(t)'''(r) }{(1 + u(t)'(r)^2)^\frac{5}{2}} \; \mathrm{d}r- 5 \int_0^1 \frac{u(t)''(r)^2 u(t)'(r)}{(1+ u(t)'(r)^2)^\frac{7}{2}}\; \mathrm{d}r
\\ & = C_t + \int_0^1 \frac{d}{dr} \frac{2 u(t)''(r)}{(1+ u(t)'(r)^2)^\frac{5}{2}} +  \frac{5 u(t)''(r)^2 u(t)'(r)}{(1+ u(t)'(r)^2)^\frac{7}{2}} .
\end{align*}
Integrating and using that $u(t) '' (0) = u(t)''(1) = 0 $ one obtains
\begin{equation*}
C_t = - 5 \int_0^1 \frac{u(t)''(r)^2 u(t)'(r)}{(1+ u'(t) (r)^2)^\frac{7}{2}}\; \mathrm{d}r
\end{equation*}
and \eqref{eq:thirdder} follows.  
\end{proof}
\begin{lemma}[A linear ODE of weakly differentiable functions, Proof in Appendix \ref{sec:App}] \label{lem:ODE}
Suppose that $f \in W^{1,1}(a,b)$ and $\alpha \in C([a,b])$, $\beta \in L^1(a,b)$ are such that 
\begin{equation}\label{eq:ODE}
f'(t) =\alpha(t) f(t) + \beta(t) \quad a.e. \; t \in (a,b).
\end{equation}
Then 
\begin{equation*}
f(t) = \exp \left( \int_a^t \alpha(s) \; \mathrm{d}s \right)  f(a) + \int_a^t \exp  \left(  \int_s^t \alpha(w) \; \mathrm{d}w \right) \beta(s) \; \mathrm{d}s, \quad \mathrm{a.e.} \; t \in (a,b) . 
\end{equation*}
\end{lemma}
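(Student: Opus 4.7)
My plan is to reduce to the fundamental theorem of calculus by multiplying through by an integrating factor, exactly as in the classical proof for smooth $f$, but with extra care because $f$ is only weakly differentiable.

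First I would set $A(t) := \int_a^t \alpha(s)\,\mathrm{d}s$. Since $\alpha \in C([a,b])$, we have $A \in C^1([a,b])$ with $A'=\alpha$, and in particular $A$ is bounded on $[a,b]$. Then define the candidate auxiliary function
\[
g(t) := e^{-A(t)} f(t).
\]
The key structural step is to verify that $g \in W^{1,1}(a,b)$ and that its weak derivative satisfies the expected product rule
\[
g'(t) = -\alpha(t)\, e^{-A(t)} f(t) + e^{-A(t)} f'(t) \quad \text{a.e.}
\]
This is a standard fact: the product of a $C^1$ function with a $W^{1,1}$ function is again in $W^{1,1}$ with the natural product formula (this is easily checked by approximating $f$ in $W^{1,1}$ by smooth functions, applying the product rule, and passing to the limit; the boundedness of $e^{-A}$ and $\alpha$ on $[a,b]$ makes the limit passage routine).

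Once the product rule is established, the ODE \eqref{eq:ODE} can be substituted into the expression for $g'$ to get
\[
g'(t) = e^{-A(t)}\bigl(f'(t) - \alpha(t) f(t)\bigr) = e^{-A(t)} \beta(t) \quad \text{a.e.}
\]
Since $e^{-A}$ is bounded on $[a,b]$ and $\beta \in L^1(a,b)$, we have $g' \in L^1(a,b)$, confirming $g \in W^{1,1}(a,b)$. The fundamental theorem of calculus for Sobolev functions of one variable then yields, for a.e.\ $t \in (a,b)$,
\[
g(t) = g(a) + \int_a^t g'(s)\,\mathrm{d}s = f(a) + \int_a^t e^{-A(s)} \beta(s)\,\mathrm{d}s.
\]

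Finally, multiplying through by $e^{A(t)}$ gives
\[
f(t) = e^{A(t)} f(a) + \int_a^t e^{A(t) - A(s)} \beta(s)\,\mathrm{d}s,
\]
which is precisely the claimed formula since $A(t) = \int_a^t \alpha(s)\,\mathrm{d}s$ and $A(t)-A(s) = \int_s^t \alpha(w)\,\mathrm{d}w$. The only real subtlety is the justification of the product rule in the $W^{1,1} \times C^1$ setting; everything else is a direct manipulation.
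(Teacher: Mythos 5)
Your proposal is correct and follows essentially the same route as the paper's proof: multiply by the integrating factor $e^{-\int_a^t \alpha}$, apply the product rule for a $C^1$ function times a $W^{1,1}$ function (which the paper cites from Evans--Gariepy rather than proving by smooth approximation as you sketch), and conclude via the fundamental theorem of calculus for Sobolev functions. No gaps; the only cosmetic difference is that you justify the product rule by hand while the paper quotes it.
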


\begin{prop}[Global $W^{3,\infty}$-estimates] \label{prop:linf}

Let $u_0 \in  W^{3,\infty}(0,1) \cap H$ be such that $u_0''(0) = u_0''(1) = 0 $ and $(u(t))_{t \geq 0}$ be an Obstacle Gradient Flow for $\E$ with inital datum $u_0$. Assume further that there is $\delta > 0 $ such that for all $t \in (0,\infty)$, $\{ u(t) = \psi\} \subset (\delta, 1- \delta)$. Then there is $C = C(\delta, ||u_0'''||_\infty, \E(u_0) ) $ such that  
\begin{equation*}
||u(t)'''||_{L^\infty} \leq C ( 1 + \sup_{s \in [0,t]} ||u(s)'||_\infty^2)^\frac{15}{2}  \quad \mathrm{a.e. } \;  t\in (0, \infty) .
\end{equation*}
\end{prop}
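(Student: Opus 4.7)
The plan is to derive a linear first-order ODE in $t$ for $f(t):=u(t)'''(x)$, treating $x$ as a parameter, and then invoke Lemma~\ref{lem:ODE}. Combining the two representations of $\nabla\mathcal{E}(u(t))'''(x)$ supplied by \eqref{eq:thirdder} and by \eqref{eq:gradientdarst} in Proposition~\ref{prop:dynH'}, and using Corollary~\ref{cor:nullset} to identify $\partial_t u(t)'''(x)$ with $\dot u(t)'''(x)$ for a.e.\ $(t,x)$, one obtains for a.e.\ $x\in(0,1)$ and a.e.\ $t>0$
\begin{equation*}
\dot f(t) \;=\; -\frac{2}{(1+u(t)'(x)^2)^{5/2}}\, f(t) \;+\; \beta(t,x),
\end{equation*}
with source
\begin{equation*}
\beta(t,x) \;:=\; \frac{5\,u(t)''(x)^2\,u(t)'(x)}{(1+u(t)'(x)^2)^{7/2}} \;+\; C_t - D_t - \mu_t([x,1]),
\end{equation*}
where $C_t:=\int_0^1 \mu_t([s,1])\,ds$ and $D_t:=-5\int_0^1 u''(r)^2 u'(r)/(1+u'(r)^2)^{7/2}\,dr$.

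Setting $M(t):=\sup_{s\in[0,t]}\|u(s)'\|_\infty^2$, the dissipation coefficient satisfies $-2/(1+u'(x)^2)^{5/2}\leq -2/(1+M(t))^{5/2}$, so Lemma~\ref{lem:ODE} yields the variation-of-constants bound
\begin{equation*}
|u(t)'''(x)| \;\leq\; \|u_0'''\|_\infty \;+\; \frac{(1+M(t))^{5/2}}{2}\,\sup_{0\leq s\leq t}\|\beta(s,\cdot)\|_\infty.
\end{equation*}
The non-local contributions to $\beta$ are easy to control. Since $|u'|/(1+u'^2)^{7/2}\leq (1/2)(1+u'^2)^{-5/2}$, one has $|D_s|\leq (5/2)\mathcal{E}(u_0)$. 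Testing the identity in Proposition~\ref{prop:dynH'} against a fixed cutoff $\overline\varphi\in C_0^\infty(0,1)$ with $\overline\varphi\equiv 1$ on $[\delta,1-\delta]$, and combining the hypothesis $\mathrm{supp}(\mu_s)\subset[\delta,1-\delta]$ with Corollary~\ref{cor:boundder}, the gradient bound \eqref{eq:gradnorm}, and the monotonicity of $\mathcal{E}\circ u$, bounds $\mu_s((0,1))$ by a constant depending only on $\delta$ and $\mathcal{E}(u_0)$; this in turn controls $|C_s|$ and $\|\mu_s([\cdot,1])\|_\infty$ uniformly in $s$.

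The main obstacle is to bound the local term $5u(s)''(x)^2 u(s)'(x)/(1+u(s)'(x)^2)^{7/2}$ pointwise by $K(1+M(t))^5$. Because $z\mapsto z/(1+z^2)^{7/2}$ is uniformly bounded on $\mathbb{R}$, this reduces to estimating $\|u(s)''\|_\infty^2$. Here the Navier boundary condition $u(s)''(0)=0$ provided by Theorem~\ref{thm:navier} is decisive: the identity $u(s)''(x)^2 = 2\int_0^x u(s)''\,u(s)'''\,dr$ combined with Cauchy-Schwarz and the energy bound $\int u''^2/(1+u'^2)^{5/2}\leq \mathcal{E}(u_0)$ yields $\|u(s)''\|_\infty^2 \leq 2(1+M(s))^{5/4}\sqrt{\mathcal{E}(u_0)}\,\|u(s)'''\|_{L^2}$. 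The key technical difficulty is the residual dependence on $\|u(s)'''\|_{L^2}\leq \bar A(t):=\sup_{s\leq t}\|u(s)'''\|_\infty$, which must be absorbed: inserting this into the variation-of-constants bound and taking the supremum over $s\leq t$ produces an estimate of the form $\bar A(t) \leq K_0(1+M(t))^{5/2} + K_1(1+M(t))^{15/4}\,\bar A(t)$, whose closure requires a Young-type inequality $(1+M(t))^{15/4}\bar A(t) \leq \tfrac{1}{2}\bar A(t)^2 + \tfrac{1}{2}(1+M(t))^{15/2}$ together with the a priori finiteness of $\bar A(t)$ guaranteed by Theorem~\ref{thm:navier} and a continuity-in-$t$ bootstrap. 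After solving the resulting quadratic inequality one arrives at $\bar A(t)\leq C_\delta(1+M(t))^{15/2}$, the exponent decomposing as $15/2 = 5/2 + 5$: the $5/2$ from inverting the dissipation coefficient via Lemma~\ref{lem:ODE}, and the $5$ from the pointwise control of $\beta$ in terms of $M(t)$.
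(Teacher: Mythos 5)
Your setup is the same as the paper's: you combine \eqref{eq:gradientdarst} with \eqref{eq:thirdder} and Corollary \ref{cor:nullset} to get a linear ODE in $t$ for (an average of) $u(t)'''(x)$, apply Lemma \ref{lem:ODE}, and control the nonlocal terms $C_t$, $\mu_t([x,1])$ and the constant $A(s)$ exactly as in the paper (cutoff test function, Corollary \ref{cor:boundder}, \eqref{eq:gradnorm}, energy monotonicity). The divergence — and the gap — is in how you treat the local source term $5u''^2u'/(1+u'^2)^{7/2}$. You bound it pointwise by $\|u(s)''\|_\infty^2\lesssim (1+M)^{5/4}\|u(s)'''\|_{L^2}$ and then majorize $\|u(s)'''\|_{L^2}$ by $\bar A(t)=\sup_{s\le t}\|u(s)'''\|_\infty$, arriving at $\bar A\le K_0(1+M)^{5/2}+K_1(1+M)^{15/4}\bar A$. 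This cannot be closed: the absorption coefficient $K_1(1+M)^{15/4}$ is never small, and your proposed fix via Young's inequality turns it into $\bar A\le c_1+c_2\bar A^2$ with $c_2$ a fixed constant and $c_1\sim(1+M)^{15/2}$. An inequality of this shape is satisfied by every sufficiently large $\bar A$ (when $4c_1c_2\ge 1$, i.e.\ precisely in the regime of interest, it is satisfied by \emph{all} $\bar A$), so neither it nor a continuity-in-$t$ bootstrap yields any bound; there is no strict improvement at the putative threshold because the quadratic term dominates there. So the final estimate does not follow from your argument as written.

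The paper avoids the self-reference entirely by a two-step integration trick. After letting $h\to0$ in the variation-of-constants formula one has the pointwise inequality \eqref{eq:linfl1}. First one integrates this inequality in $x$ over $(0,1)$: by $|u'|\le\tfrac12(1+u'^2)$ the troublesome local term satisfies $\int_0^1 u''^2|u'|/(1+u'^2)^{7/2}\,\mathrm{d}x\le\tfrac12\E(u(s))\le\tfrac12\E(u_0)$, i.e.\ it is controlled by the energy alone, with no appearance of $u'''$. This gives $\|u(t)'''\|_{L^1}\le CS$ with $S=(1+\sup_{s\le t}\|u(s)'\|_\infty^2)^{5/2}$. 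Second, since $u(t)''(0)=0$ (Theorem \ref{thm:navier}), $\|u(t)''\|_{L^\infty}\le\|u(t)'''\|_{L^1}\le CS$, and plugging this back into \eqref{eq:linfl1} bounds the local term by $5\sup_{s\le t}\|u(s)''\|_\infty^2\cdot S\le CS^3$, giving the exponent $15/2$ without any absorption. If you want to salvage your route, replace the step $\|u(s)'''\|_{L^2}\le\bar A(t)$ by this $x$-integrated estimate; as it stands, the closure of your quadratic inequality is a genuine gap. (A smaller point: working pointwise in $x$ rather than with the averages $f_{h,x}$ requires you to justify that $t\mapsto\mu_t([x,1])$ is measurable and that the ODE data are integrable in $t$ for a.e.\ fixed $x$ — this is exactly the measurability work the paper does before invoking Lemma \ref{lem:ODE}.)
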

\begin{proof}
Adopting the notation from Proposition \ref{prop:dynH'} we define for $x \in (0,1)$ 
\begin{equation*}
M_t(x) := \int_0^1 \mu_t([s,1]) \; \mathrm{d}s - \mu_t([x,1]) .
\end{equation*}
Fix $T> 0 $. \textbf{ We show that for each $x \in (0,1)$, $t \mapsto M_t(x) \in L^\infty(0,\infty)$ and $\sup_{x \in (0,1)} ||M_{(\cdot)}(x)||_{L^\infty(0,\infty)} < \infty$}. For this we have to show Lebesgue measurability and boundedness. For the measurability observe that for each $\phi \in C_0^\infty(0,1)$ 
\begin{equation*}
(0,T) \ni t \mapsto \int_0^1 \phi \; \mathrm{d}\mu_t 
\end{equation*}
is measurable. Note that this map is only defined for almost every $t> 0$, but this does not affect the measurability claim as null sets are Lebesgue measurable. Indeed, 
\begin{equation*}
\int_0^1 \phi \; \mathrm{d}\mu_t = (\dot{u}(t) , \phi) + ( \nabla \E(u(t)), \phi ) 
\end{equation*}
is measurable in $t$ as $(0, \infty) \ni t \mapsto \dot{u}(t) \in H $ is Bochner measurable and hence also weakly measurable. The second summand is continuous and therefore also measurable. Now fix $x \in (0,1)$ and choose $\phi_n \subset C_0^\infty(0,1)$ such that $\phi_n \rightarrow \chi_{[x,1-\frac{\delta}{2}]}$ in $C^0([0,1])$. Here we use the convention that intervals are by definition empty if the lower bound exceeds the upper bound. Since $\mathrm{supp}(\mu_t) \subset [\delta, 1-\delta]$   
\begin{equation*}
\mu_t([x,1]) = \mu_t([x, 1- \tfrac{\delta}{2}]) = \lim_{n \rightarrow \infty} \int_0^1 \phi_n \; \mathrm{d}\mu_t
\end{equation*}
and so $t \mapsto \mu_t([x,1])$ is measurable as pointwise limit of measurable functions. It remains to show that 
\begin{equation*}
t \mapsto \int_0^1 \mu_t([s,1]) \; \mathrm{d}s  
\end{equation*}
is measurable. But this follows from $\mu_t([\cdot, 1])$ being monotone  for each $t$ and therefore Riemann integrable. Hence
 \begin{equation}\label{eq:mear}
 \int_0^1 \mu_t([s,1]) \; \mathrm{d}s = \lim_{k \rightarrow \infty} \frac{1}{k} \sum_{l = 1}^{k} \mu_t \left( \left[\nicefrac{l}{k},1\right] \right) 
\end{equation}   
and therefore it is measurable in $t$ as pointwise limit of linear combinations of measurable functions. \textbf{We continue showing that this map is essentially uniformly bounded in $t$}. For this fix $\eta \in C_0^\infty(0,1)$ such that $ \eta \equiv 1$ on $[\delta, 1- \delta]$. Then using Corollary \ref{cor:boundder} and \eqref{eq:gradnorm} we obtain 
\begin{align}\label{eq:defA}
\mu_t((0,1)) & = \int_0^1 \eta \; \mathrm{d}\mu_t = ( \dot{u}(t) , \eta ) + ( \nabla \E(u(t)), \eta) \leq ||\eta|| ( ||\dot{u}(t) || + || \nabla \E(u(t)) || ) \nonumber  \\ &  \leq 2 ||\eta || \; || \nabla \E(u(t)) || \leq 2||\eta||\left(  \left( 1 + \frac{5}{2}C_p \right) \E(u(t) ) + 1 \right) \nonumber \\ & \leq 2 || \eta ||  \left( \left( 1 + \frac{5}{2}C_p \right) \E(u_0 ) + 1\right) =: J.
\end{align}  
We conclude that $t \mapsto M_t(x) $ defines an $L^\infty(0, \infty)$ function and an upper bound for the $L^\infty$ norms can be chosen independently of $x$. The intermediate claim follows.
Let $N \subset (0,1)$ be the null set of Corollary \ref{cor:nullset}. Define for $x \in (0,1) $ and $h > 0 $ 
\begin{align*}
f_{h,x}(t) & := \frac{1}{h} \int_x^{x+h} u(t)'''(r) \; \mathrm{d}r, \\
A(s)  &:= 5 \int_0^1 \frac{u(s)''(l)^2u(s)'(l)}{(1 + u(s)'(l)^2)^\frac{7}{2}} \; \mathrm{d}l.
\end{align*}
Estimating $|u(s)'(l)| \leq \frac{1}{2} (1 + u(s)'(l)^2)$ in the numerator one obtains that $|A(s)| \leq 5 \E(u(s)) \leq 5 \E(u_0)$. 
Now we compute using the definition of $N$, Fubini's Theorem (which can easily be justified) and \eqref{eq:gradientdarst} as well as \eqref{eq:thirdder} 
\begin{align*}
f_{h,x}(t)  & = \frac{1}{h} \int_x^{x+h} u(t)'''(r) \; \mathrm{d}r  = \frac{1}{h} \int_{[x,x+h] \setminus N } u(t)'''(r) \; \mathrm{d}r 
\\ & = \frac{1}{h} \int_{[x,x+h] \setminus N } \left( \int_0^t \dot{u}(s)'''(r) \; \mathrm{d}s + u_0'''(r) \right) \; \mathrm{d}r 
\\ & = \frac{1}{h} \int_{x}^{x+h} u_0'''(r) \; \mathrm{d}r + \int_0^t  \frac{1}{h} \int_x^{x+h} \dot{u}(s)'''(r) \; \mathrm{d}r \; \mathrm{d}s 
\\ & = \frac{1}{h} \int_x^{x+h} u_0'''(r) \; \mathrm{d}r +\frac{1}{h} \int_0^t \int_{x}^{x+h} - \nabla \E(u(s))'''(r) + M_s(r) \; \mathrm{d}r \; \mathrm{d}s  
\\ & = \frac{1}{h} \int_x^{x+h} u_0'''(r) \; \mathrm{d}r \\ & \quad  + \frac{1}{h}\int_0^t \int_{x}^{x+h} \frac{-2 u(s)'''(r)}{(1 + u(s)'(r)^2)^\frac{5}{2}} + 5 \frac{u(s)''(r)^2 u(s)'(r)}{(1+ u(s)'(r)^2)^\frac{7}{2}}+ (A(s) +M_s(r) )  \; \mathrm{d}r \; \mathrm{d}s. 
\end{align*}
Define for $s \in [0,T]$ 
\begin{align*}
\theta_{h,x}^1 (s )  & := A(s) + \frac{1}{h}\int_x^{x+h} M_s(r) \; \mathrm{d}r, \\
\theta_{h,x}^2(s)  & := \frac{1}{h} \int_x^{x+h} 2 u(s)'''(r) \left( \frac{1}{(1+u'(s)(r)^2)^\frac{5}{2}} - \frac{1}{(1+ u'(s)(x)^2)^\frac{5}{2}}\right) \; \mathrm{d}r ,\\
 \theta_{h,x}^3(s) & := \frac{1}{h} \int_x^{x+h} 5 \frac{u(s)''(r)^2 u(s)'(r)}{(1+ u(s)'(r)^2)^\frac{7}{2}} \; \mathrm{d}r.
 \end{align*}
Then 
\begin{equation*}
f_{h,x}(t) = \frac{1}{h} \int_{x}^{x+h} u_0'''(r) \; \mathrm{d}r + \int_0^t \frac{-2}{(1+ u'(s)(x)^2)^\frac{5}{2}} f_{h,x}(s) \; \mathrm{d}s + \int_0^t \left( \sum_{i = 1}^{3} \theta_{h,x}^i(s)  \right) \; \mathrm{d}s.
\end{equation*}
One infers that $f_{h,x} \in W^{1,1}(0,T)$ and satisfies a differential equation like \eqref{eq:ODE} with 
\begin{equation*}
\alpha(t) := - \frac{2}{(1+ u(t)'(x)^2)^\frac{5}{2}}, \qquad
\beta(t) :=\sum_{i = 1}^{3} \theta_{h,x}^i(s)  .
\end{equation*}
Now observe that for $s \in [0,T] $ 
\begin{equation}\label{eq:thetaest1}
|\theta_{h,x}^1(t)| = |A(t)| + \frac{1}{h}\int_{x}^{x+h} M_s(r) \; \mathrm{d}r \leq 5 \E(u_0) + 2J  =: J^* ,
\end{equation}
where $J$ is given in \eqref{eq:defA}. Furthermore,
\begin{align}\label{eq:thetaest2}
|\theta_{h,x}^2(s) | & \leq \frac{2}{h} \int_{x}^{x+h} |u(s)'''(r)| \left( \int_{x}^{x+h} \frac{5 |u(s)''(\rho)|\; |u(s)'(\rho)|}{(1+ u(s)'(\rho)^2)^\frac{7}{2}} d\rho \right) \; \mathrm{d}r \nonumber
\\ & = \frac{10}{h} \int_{[x,x+h]^2} \frac{|u(s)'''(r) | \; |u(s)''(\rho)| \; | u(s)'(\rho)|}{(1 + u(s)'(\rho)^2 )^\frac{7}{2}} d(r, \rho) \nonumber
\\ & \leq 10 \sqrt{ \int_{[x,x+h]^2} \frac{|u(s)'''(r) |^2 \; |u(s)''(\rho)|^2 \; | u(s)'(\rho)|^2}{(1 + u(s)'(\rho)^2 )^7} d(r, \rho)} \nonumber
\\ & \leq 10 \sqrt{\int_0^1 |u(s)'''(r)|^2 \; \mathrm{d}r} \sqrt{\int_{x}^{x+h}  \frac{  |u(s)''(\rho)|^2 \; | u(s)'(\rho)|^2}{(1 + u(s)'(\rho)^2 )^7} } \nonumber \\ & 
\leq 10 \sqrt{h} \sup_{ \widetilde{t} \in [0,T]} ||u(\widetilde{t}) '''||_{L^2} \sup_{ \widetilde{t} \in [0,T] } ||u(\widetilde{t}) '' ||_{L^\infty}   .
\end{align}
Using Lemma \ref{lem:ODE}, one finds that 
\begin{align*}
f_{h,x}(t) &  = \exp\left( - \int_0^t \frac{2}{(1+ u(s)'(x)^2)^\frac{5}{2}}\right) f_{h,x} (0)  \\ & \qquad  + \int_0^t \exp\left( - \int_s^t \frac{2}{(1+ u(w)'(x)^2)^\frac{5}{2}} \; \mathrm{d}w \right) \left( \sum_{i = 1}^{3} \theta_{h,x}^i(s) \right) \; \mathrm{d}s 
\end{align*}
and therefore, using 
$
f_{h,x}(0) = \frac{1}{h} \int_x^{x+h} u_0'''(r) \; \mathrm{d}r  ,
$
and \eqref{eq:thetaest1}, \eqref{eq:thetaest2} one has
\begin{align*}
& |f_{h,x}(t)|  \leq \frac{1}{h} \exp\left( - \int_0^t \frac{2}{(1+ u(s)'(x))^2 )^\frac{5}{2}}\; \mathrm{d}s\right) \int_x^{x+h} |u_0'''(r)| \; \mathrm{d}r  \\ & \; \;   + (J^* + 10 \sup_{ \widetilde{t} \in [0,T] } || u(\widetilde{t})'''||_{L^2}||u(\widetilde{t}) '' ||_{L^\infty}  \sqrt{h }  )   \int_0^t  \exp\left( - \int_s^t \frac{1}{(1 + u(w)'(x)^2)^\frac{5}{2} }   \; \mathrm{d}w \right)  \; \mathrm{d}s .
\\ & \;  \;  + \int_0^t  \exp\left( - \int_s^t \frac{1}{(1 + u(w)'(x)^2)^\frac{5}{2} }   \; \mathrm{d}w \right)|\theta_{h,x}^3(s)|  \; \mathrm{d}s.
\end{align*}
Observe that for each $x \in (0,1)$ and $s \in (0,1)$ 
\begin{equation*}
|\theta_{h,x}^3(s) | \leq 5 ||u(s)''||^2_{L^\infty} \leq  5\sup_{\widetilde{t} \in [0,T]}||u ( \widetilde{t}) ''||^2_{L^\infty}  
\end{equation*}
and because of continuity of the integrand 
\begin{equation*}
\lim_{h \rightarrow 0} |\theta_{h,x }^3 (s) | = 5 \frac{u(s)''(x)^2 |u(s)'(x)|}{(1+ u(s)'(x)^2)^\frac{7}{2}} \quad \forall x \in (0,1) \; \forall s \in (0,T) .
\end{equation*}
For the following we define 
$
S:= (1+ \sup_{\widetilde{t} \in [0,T]} ||u'(\widetilde{t})||_\infty^2 )^\frac{5}{2}.
$
Now the Lebesgue differentiation theorem and the dominated convergence theorem imply that for almost every $x \in (0,1)$
\begin{align}\label{eq:linfl1}
|u(t)'''(x)| & \leq \liminf_{h \rightarrow 0 } |f_{h,x}(t)| \nonumber \\ & \leq ||u_0'''||_\infty + J^* \int_0^t \exp \left(- \int_{s}^t \frac{1}{(1+ u(w)'(x)^2)^\frac{5}{2} }  \; \mathrm{d}w\right) \; \mathrm{d}s  \nonumber \\
& \quad + \int_0^t \exp \left(- \int_{s}^t \frac{1}{(1+ u(w)'(x)^2)^\frac{5}{2} }  \; \mathrm{d}w\right)\frac{5 u(s)''(x)^2 |u(s)'(x)|}{(1+ u(s)'(x)^2)^\frac{7}{2}} \; \mathrm{d}s 
\nonumber \\
& \leq  ||u_0'''||_\infty + J^* \int_0^t e^{-\frac{(t-s)}{S} } \; \mathrm{d}s+ 5 \int_0^t e^{-\frac{t-s}{S}} \frac{u(s)''(x)^2|u(s)'(x)|}{(1+ u(s)'(x)^2)^\frac{7}{2}}   \; \mathrm{d}s 
\nonumber \\ & \leq ||u_0'''||_\infty + J^* S + 5 \int_0^t e^{-\frac{t-s}{S}}  \frac{u(s)''(x)^2|u(s)'(x)|}{(1+ u(s)'(x)^2)^\frac{7}{2}} \; \mathrm{d}s .
 \end{align}
Integrating over $x$ and using Fubini's Theorem we obtain 
\begin{equation}
\int_0^1 |u(t)'''(x)| \; \mathrm{d}x \leq ||u_0'''||_\infty + J^* S +5 \int_0^t e^\frac{t-s}{S} \left( \int_0^1 \frac{u(s)''(x)^2|u(s)'(x)|}{(1+ u(s)'(x)^2)^\frac{7}{2}} \; \mathrm{d}x \right) \; \mathrm{d}s.
\end{equation} 
Estimating $|u(s)'(x)| \leq \frac{1}{2}( 1 + |u(s)'(x)|^2) $ and $1 \leq S$ we obtain 
\begin{align*}
||u(t)'''||_{L^1} & \leq ||u_0'''||_\infty + J^* S + \frac{5}{2} \int_0^t e^\frac{t-s}{S} \E(u(s)) \; \mathrm{d}s 
\\ & \leq  ||u_0'''||_\infty + J^* S + \frac{5}{2} \E(u_0) S 
\leq \left( ||u_0'''||_\infty + \frac{5}{2}\E(u_0) + J^* \right)S.
\end{align*}
Therefore, we have bounded $||u(t)'''||_{L^1}$ in terms of $S$. Since $u(t)''(0) = 0$ we find 
\begin{equation*}
||u(t)''||_{L^\infty} \leq \left( ||u_0'''||_\infty + \frac{5}{2}\E(u_0) + J^* \right)S.
\end{equation*}
Using this we can go back to \eqref{eq:linfl1} and find for almost every $x \in (0,1)$ 
\begin{align*}
|u(t)'''(x)| & \leq ||u_0'''||_\infty + J^*S + 5 \sup_{\widetilde{t} \in [0,t] }  ||u(\widetilde{t})''||^2_{L^\infty} \int_0^t e^\frac{t-s}{S} \; \mathrm{d}s 
\\ & \leq ||u_0'''||_\infty + J^* S + 5 S^3 \left( ||u_0'''||_\infty + \frac{5}{2} \E(u_0) + J^* \right)^2 \leq C S^3   
\end{align*}
for some $C =C(\delta, ||u_0'''||_\infty, \E(u_0) ) $. The claim follows. 
\end{proof}
\begin{remark}
The measurability discussion in \eqref{eq:mear} was really necessary the way it is presented in this article, see \cite[Discussion below Theorem 4.48]{Aliprantis} for details.  
\end{remark}
\begin{remark}
The $L^\infty$-estimate for the third derivative holds only true provided that the contact set does not come arbitrarily close to $0$ or $1$. For the elastic energy, one finds mild conditions on the obstacle which ensure exactly that, as we will discuss in the next two propositions. 
\end{remark}
\begin{prop}[A Standard Estimate for the Elastic Energy]\label{prop:cauch} 
 
Let $u \in H$ be arbitrary and let $G$ be the function defined in \eqref{eq:G}.
Then 
\begin{equation}\label{eq:cauch}
\E(u) \geq \sup_{x_1, x_2 \in (0,1), x_1 \neq x_2 }  \frac{(G(u'(x_1))- G(u'(x_2)))^2}{x_1 -x_2}.
\end{equation}
\end{prop}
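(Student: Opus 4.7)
The plan is to apply the Cauchy--Schwarz inequality to the fundamental theorem of calculus representation of $G(u'(x_1))-G(u'(x_2))$. First I would note that the supremum on the right-hand side is nontrivial only when $x_1 > x_2$, since squaring the numerator renders it nonnegative while the denominator changes sign under interchange of $x_1$ and $x_2$. So I may as well fix $x_2 < x_1$ throughout.

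Next I would address regularity. Because $u \in H = W^{2,2}(0,1) \cap W_0^{1,2}(0,1)$, the derivative $u'$ is absolutely continuous on $[0,1]$ with $u'' \in L^2(0,1)$. Since $G \in C^3(\mathbb{R})$, the composition $G \circ u'$ is absolutely continuous with $(G \circ u')'(x) = G'(u'(x))u''(x)$ a.e. Consequently
\begin{equation*}
G(u'(x_1)) - G(u'(x_2)) = \int_{x_2}^{x_1} G'(u'(s)) u''(s) \; \mathrm{d}s.
\end{equation*}

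Applying the Cauchy--Schwarz inequality in $L^2(x_2,x_1)$ then yields
\begin{equation*}
(G(u'(x_1)) - G(u'(x_2)))^2 \leq (x_1 - x_2) \int_{x_2}^{x_1} \bigl(G'(u'(s))u''(s)\bigr)^2 \; \mathrm{d}s.
\end{equation*}
Since the integrand is nonnegative I can enlarge the domain to $(0,1)$, and since $K \geq 0$ by the hypothesis in Definition \ref{def:navier}, the right-hand side is bounded by $(x_1-x_2)\E(u)$ by the definition of $\E$ in \eqref{eq:energ}. Dividing by $x_1-x_2>0$ and taking the supremum over all such pairs $(x_1,x_2)$ gives the claim. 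There is no substantial obstacle in this argument; the only mildly delicate point is the absolute continuity of $G \circ u'$, which is immediate from $C^3$ regularity of $G$ and the chain rule for Sobolev functions.
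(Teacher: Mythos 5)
Your argument is correct and is essentially the proof given in the paper: both reduce the claim to the fundamental theorem of calculus for $G\circ u'$ together with the Cauchy--Schwarz inequality, after discarding the nonnegative remainder of the energy. Your extra remarks on the sign of the denominator and the absolute continuity of $G\circ u'$ are fine but not needed beyond what the paper's one-line proof already implies.
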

\begin{proof}
Suppose that $x_1,x_2 \in (0,1)$ is such that $x_1 \neq x_2$. 
Observe that 
\begin{equation*}
\E(u) = \int_0^1 G'(u'(x))^2 u''(x)^2  \; \mathrm{d}x \geq \int_{x_1}^{x_2} G'(u'(x))^2 u''(x)^2 \; \mathrm{d}x
\end{equation*}
the claim follows easily using the Cauchy Schwarz inequality and the fundamental theorem of calculus. 
\end{proof}
\begin{remark}\label{rem:interestin}
For the rest of the article we define 
\begin{equation*}
c_0 := 2 ||G||_{\infty; \mathbb{R}} = 2 \int_0^\infty \frac{1}{(1+s^2)^\frac{5}{4}} \; \mathrm{d}s ,
\end{equation*}
where $G$ is given in \eqref{eq:G}. For the study of the time-independent problem, $c_0$ is an important constant since 
$
 \inf_{ u\in C} \E(u) \leq c_0^2, 
$
see \cite[Lemma 2.4]{Anna}. Note also that $G: \mathbb{R} \rightarrow \left( -\frac{c_0}{2}, \frac{c_0}{2} \right)$ is a diffeomorphism.  
\end{remark}
\begin{prop}[Trapping the Contact Set]

Suppose that $\psi \in C^0([0,1])$ is such that $\psi(0), \psi(1) < 0 $ and there is $\eta > 0 $ such that $\psi \in C^1([0, \eta]) \cap C^1([1- \eta, 1])$. Let $u_0 \in W^{3,\infty}(0,1)$ be such that $u_0''(0) = u_0''(1) = 0 $ and let $(u(t))_{t\geq 0} $ be the Obstacle Gradient Flow starting at $u_0$.  Then there is $\delta > 0 $  such that  $\{u(t) = \psi \} \subset [\delta, 1-\delta]$ for each $t > 0 $.
\end{prop}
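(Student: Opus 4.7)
The plan is to argue by contradiction using the Cauchy--Schwarz-type energy estimate of Proposition \ref{prop:cauch} together with the boundedness of $G$ at infinity. I only treat the left boundary; the $1-\delta$ side is completely symmetric. If no such $\delta$ existed, I could extract a sequence of times $t_n\geq 0$ and contact points $x_n\to 0^+$ with $x_n\in(0,\eta)$ and $u(t_n)(x_n)=\psi(x_n)$.

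At $x_n$ I would then extract two competing pieces of information. First, since $u(t_n)\geq\psi$ globally and equality holds at the interior point $x_n$, the difference $u(t_n)-\psi$ attains an interior global minimum at $x_n$; because $u(t_n)\in W^{3,\infty}\subset C^1$ by Remark \ref{rem:secder} and $\psi$ is $C^1$ near $0$, the first-order condition forces
\[
u(t_n)'(x_n)=\psi'(x_n)\longrightarrow \psi'(0),
\]
a finite number. Second, the mean value theorem applied to $u(t_n)$ on $[0,x_n]$, using $u(t_n)(0)=0$, produces $\xi_n\in(0,x_n)$ with
\[
u(t_n)'(\xi_n)=\frac{\psi(x_n)}{x_n}\longrightarrow -\infty,
\]
since $\psi(x_n)\to\psi(0)<0$ and $x_n\to 0^+$.

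I would then feed the pair $(\xi_n,x_n)$ into Proposition \ref{prop:cauch}. Using $|\xi_n-x_n|\leq x_n$ together with the fact that $G:\mathbb{R}\to(-c_0/2,c_0/2)$ is a bounded increasing diffeomorphism (Remark \ref{rem:interestin}), the numerator $\bigl(G(u(t_n)'(\xi_n))-G(\psi'(x_n))\bigr)^{2}$ converges to the strictly positive constant $(c_0/2+G(\psi'(0)))^{2}$, while the denominator $x_n$ tends to $0$. The resulting blow-up of the lower bound contradicts the uniform control $\E(u(t_n))\leq\E(u_0)$ coming from Proposition \ref{prop:enrggdisp}, which finishes the proof.

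The only genuinely nontrivial step is the first-order condition $u(t_n)'(x_n)=\psi'(x_n)$: it uses that $x_n$ is an interior minimum (automatic from $\psi(0)<0=u(t_n)(0)$ for $n$ large) and that $\psi$ is differentiable at $x_n$, which is exactly the extra regularity hypothesis imposed in this proposition. Everything else is Rolle/mean value theorem and the asymptotic behavior $G(\pm\infty)=\pm c_0/2$.
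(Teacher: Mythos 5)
Your argument is correct: the first-order condition $u(t_n)'(x_n)=\psi'(x_n)$ at the interior contact point, the mean value theorem on $[0,x_n]$, the lower bound of Proposition \ref{prop:cauch}, the boundedness of $G$ (Remark \ref{rem:interestin}) and the energy monotonicity from Proposition \ref{prop:enrggdisp} do produce the desired contradiction. The ingredients are the same as in the paper's proof, but you organize the contradiction in the opposite direction: the paper deduces from $\E(u_0)\geq \tfrac{1}{\delta_n}\sup_{\eta\in(0,\delta_n)}\big(G(u(t_n)'(\eta))-G(\psi'(\delta_n))\big)^2$ that $u(t_n)'$ is asymptotically bounded by $|\psi'(0)|$ on $(0,\delta_n)$ (a step that requires monotonicity and continuity of $G^{-1}$ near $G(|\psi'(0)|)$) and then integrates to arrive at $0=u(t_n)(0)\le \psi(0)<0$ in the limit, whereas you let the mean value theorem produce a point $\xi_n$ with $u(t_n)'(\xi_n)=\psi(x_n)/x_n\to-\infty$ and let the resulting blow-up of the right-hand side of \eqref{eq:cauch} contradict $\E(u(t_n))\le\E(u_0)$. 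Your route is slightly more economical: it uses only that $G$ is bounded, increasing and continuous, and it replaces the paper's final integration step by a single application of the mean value theorem. One cosmetic remark: $u(t_n)\in H\hookrightarrow C^1([0,1])$ already suffices for both the first-order condition and the mean value theorem, so the appeal to $W^{3,\infty}$-regularity via Remark \ref{rem:secder} is not needed.
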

\begin{proof}
Suppose that there is a sequence $\delta_n \rightarrow 0 $ and $t_n \subset [0, \infty)$  such that $u(t_n) (\delta_n) = \psi(\delta_n)$. Without loss of generality, one can assume that $\delta_n < \eta $ for each $n \in \mathbb{N}$. Note that since $u(t_n)-\psi$ attains its global minimum at $x = \delta_n$, we obtain $u(t_n)'(\delta_n) = \psi'(\delta_n)$ for each $n \in \mathbb{N}$.
Further, let $G$ be defined as in \eqref{eq:G}.
 Observe that then 
\begin{align*}
\E(u_0) & \geq \E(u(t_n)) \geq \int_0^{\delta_n} \frac{u(t_n)''(x)^2}{(1 + u(t_n)'(x)^2)^\frac{5}{2}} \; \mathrm{d}x \\ & \geq \frac{1}{\delta_n} \sup_{ \eta \in (0, \delta_n) }(  G(u(t_n)'(\eta)) - G( u(t_n)'(\delta_n) ) )^2 \\ &  = \frac{1}{\delta_n}  \sup_{ \eta \in (0, \delta_n) }(G(u(t_n)'(\eta))- G( \psi'(\delta_n)) )^2 . 
\end{align*}
Since $\delta_n \rightarrow 0$, we find that 
\begin{equation*}
\sup_{\eta \in (0,\delta_n) } |G(u(t_n)'(\eta)) - G(\psi'(\delta_n)) | \rightarrow 0 \quad (n \rightarrow \infty),
\end{equation*}
and thus by continuity of $G$,$\psi'$, 
\begin{equation*}
\limsup_{n \rightarrow \infty } \sup_{ \eta \in (0, \delta_n) }|G(u(t_n)'(\eta)) | \leq |G(\psi'(0))|.  
\end{equation*} 
Monotonicity of $G$ and $|G(w)|= G(|w|)$ implies 
\begin{equation*}
\limsup_{n \rightarrow \infty} G \left( \sup_{\eta \in (0,\delta_n)}| u(t_n)'(\eta)|  \right) \leq  G(|\psi'(0)|)   ,
\end{equation*}
which implies that 
\begin{equation*}
\limsup_{n\rightarrow \infty} \sup_{\eta \in (0,\delta_n)} |u(t_n)'(\eta)|\leq  |\psi'(0)| 
\end{equation*}
because of monotonicity and continuity of $G^{-1}$ in a neighborhood  $[0, G(|\psi'(0)|) + \epsilon]$ for a sufficiently small $\epsilon >0 $. Now observe that 
\begin{align*}
0 & = \lim_{n \rightarrow \infty } u(t_n)(0) = \lim_{n \rightarrow \infty} \left( u(t_n)(\delta_n) - \int_0^{\delta_n} u(t_n)'(\eta) d\eta  \right) \\ &  \leq \liminf_{n \rightarrow \infty} \left( \psi(\delta_n) + \delta_n \sup_{ \eta \in (0, \delta_n)} | u(t_n)'(\eta)| \right) = \psi(0)   ,
\end{align*}
a contradiction since $\psi(0) < 0 $. Therefore there exists $\delta_0 > 0 $ such that for each $t> 0 $,  $u(t)(x) > \psi$ for each $x \in (0,  \delta_0)$. Similarly one can show that there exists $\delta_1 > 0 $ such that $ u(t)(x)  > \psi $ for each $x  \in (1- \delta_1, 1)$. The claim follows choosing $\delta := \min\{ \delta_0, \delta_1 \} $.
\end{proof}

\begin{theorem}[Subconvergence Behavior of the Elastic Flow]\label{thm:subcon}
 
Let $\psi \in C^0([0,1])$ be such that $\psi(0),\psi(1) < 0 $ and there is $\eta > 0$ such that $\psi \in C^1([0, \eta]) \cap C^1([1-\eta ,1])$. Further, let $u_0 \in W^{3,\infty}(0,1) \cap H$ be such that $u_0''(0) = u_0''(1) = 0$. Let $(u(t))_{t \geq 0 }$ be the Obstacle Gradient Flow for the elastic energy with initial data $u_0$. Then one of the following is true: 
\begin{enumerate}
\item (Vertical parts at $\infty$) There is a sequence $\theta_n \rightarrow \infty $ such that 
\begin{equation*}
||u(\theta_n)'||_\infty \rightarrow \infty  
\end{equation*}
\item  (Subconvergence to a critical point) 
There is a sequence $\theta_n \rightarrow \infty$ and $u_\infty \in C$ such that $u(\theta_n) \rightarrow u_\infty $ in $W^{2,2}(0,1)$ and
$$\label{eq:varineq}
(\nabla \E(u_\infty), v- u_\infty) \geq 0 \quad \forall v \in C .\eqno{(\theequation)}
$$
\end{enumerate}
\end{theorem}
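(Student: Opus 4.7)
The plan is to argue by dichotomy. Assume alternative (1) fails, so that
\begin{equation*}
M := \sup_{t \geq 0} ||u(t)'||_\infty < \infty,
\end{equation*}
and I will derive alternative (2). The trapping-of-the-contact-set proposition just above produces a $\delta > 0$, independent of $t$, with $\{u(t) = \psi\} \subset [\delta, 1-\delta]$ for every $t \geq 0$; this is precisely the hypothesis of Proposition \ref{prop:linf}, which then supplies a constant $C^\star$, independent of $t$, such that
\begin{equation*}
||u(t)'''||_{L^\infty} \leq C^\star (1 + M^2)^{15/2} \quad \text{for a.e. } t \geq 0.
\end{equation*}
Combined with the Navier boundary condition $u(t)''(0) = 0$ from Theorem \ref{thm:navier} and the hypothesis $||u(t)'||_\infty \leq M$, this yields a uniform $W^{3,\infty}$-bound on the orbit on the full-measure set of $t$ where Proposition \ref{prop:linf} applies. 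Since the embedding $W^{3,\infty}(0,1) \hookrightarrow W^{2,2}(0,1) = H$ is compact, this portion of the orbit is precompact in $H$.

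Next I exploit the energy dissipation from Proposition \ref{prop:enrggdisp}: $\dot u \in L^2((0,\infty),H)$, so $\int_N^{N+1} ||\dot u(s)||_H^2 \, \mathrm{d}s \to 0$ as $N \to \infty$. For each large $N$ I select $\theta_N \in [N, N+1]$ at which $(FVI)$ holds, the bound of Proposition \ref{prop:linf} applies, and
\begin{equation*}
||\dot u(\theta_N)||_H^2 \leq 2 \int_N^{N+1} ||\dot u(s)||_H^2 \, \mathrm{d}s ;
\end{equation*}
this is possible since the exceptional set has measure zero. Then $\theta_N \to \infty$ and $||\dot u(\theta_N)||_H \to 0$. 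By precompactness a subsequence (not relabelled) satisfies $u(\theta_N) \to u_\infty$ in $H$, and $u_\infty \in C$ by closedness of $C$.

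It remains to pass to the limit in
\begin{equation*}
(\dot u(\theta_N), v - u(\theta_N)) + (\nabla \E(u(\theta_N)), v - u(\theta_N)) \geq 0 \quad \forall v \in C.
\end{equation*}
The first summand is bounded by $||\dot u(\theta_N)||_H \cdot ||v - u(\theta_N)||_H$, which goes to zero since $||v - u(\theta_N)||_H$ is bounded. For the second, Assumption \ref{ass:main} ensures $\nabla \E(u(\theta_N)) \to \nabla \E(u_\infty)$ in $H$ by continuity, and combined with $u(\theta_N) \to u_\infty$ in $H$ the inner product converges to $(\nabla \E(u_\infty), v - u_\infty)$. The limit inequality is the variational inequality claimed in (2).

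The main obstacle is the uniform $W^{3,\infty}$-bound on the orbit; once that is in hand, the remaining steps form a routine subsequence argument. Proposition \ref{prop:linf} is the decisive input, converting uniform control of $||u(t)'||_\infty$ into uniform control of $||u(t)||_{W^{3,\infty}}$, which is exactly why divergence of the slope is the sole obstruction to subconvergence.
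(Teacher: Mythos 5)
Your argument is correct and follows essentially the same route as the paper's proof: deny alternative (1), use the contact-set trapping result so that Proposition \ref{prop:linf} yields a time-uniform third-derivative bound, combine it with the Navier boundary data to get orbit precompactness in $H$, pick times $\theta_n \to \infty$ (avoiding the null sets) where $\|\dot u(\theta_n)\| \to 0$ via the $L^2(0,\infty)$ dissipation, and pass to the limit in $(FVI)$. The only cosmetic difference is that you extract strong $H$-convergence from a uniform $W^{3,\infty}$ bound, whereas the paper works with weak $W^{3,2}$ compactness plus the compact embedding into $H$; both are equivalent here.
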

\begin{proof}
Suppose that $(1)$ does not hold true. Then $(0, \infty) \ni t \mapsto ||u(t)'||_\infty$ is bounded.  Because of \eqref{eq:abl}, $||\dot{u}(\cdot)|| \in L^2(0,\infty)$ and hence there is a sequence $\theta_n \rightarrow \infty$ such that $||\dot{u}(\theta_n)|| \rightarrow 0 $. Moreover $\theta_n$ can be chosen such that $(FVI)$ and the estimate in Proposition \ref{prop:linf} hold true, since these hold almost everywhere. From Proposition \ref{prop:linf} can be inferred that $ ||u(\theta_n)'''||_{L^\infty}$ is bounded. Since $u(t)''(0)= u(t)''(1) = u(t)(0) = u(t)(1) = 0 $ for all $t >0 $, see Theorem \ref{thm:navier}, we find that $ ||u(\theta_n)||_{W^{3,2}}$ is bounded.
 Therefore there exists a subsequence of $(\theta_n)_{n \in \mathbb{N}}$  which we do not relabel such that $(u(\theta_n))_{n \in \mathbb{N}} $ has a weak limit in $W^{3,2}(0,1)$. Let $u_\infty$ denote this weak limit. Since $W^{3,2}(0,1) \cap C$ is convex and closed in $W^{3,2}(0,1)$, we find that $u_\infty \in W^{3,2}(0,1) \cap C$. The compact embedding  $W^{3,2}(0,1) \cap H \hookrightarrow H$ shows also  that $u(\theta_n) \rightarrow u_\infty$ in $H$. We show finally that $u_\infty$ is also a critical point of the functional. Indeed, let $v \in C$ be arbitrary but fixed. Then 
 \begin{equation*}
  0 \leq \liminf_{n \rightarrow \infty} ( \dot{u}(\theta_n) , v- u(\theta_n) ) + ( \nabla \E(u(\theta_n) ), v- u(\theta_n) )  = (\nabla \E(u_\infty) , v - u_\infty). \qedhere
 \end{equation*}
\end{proof}
\begin{remark}
This behavior is consistent with the behavior of the static problem, see \cite{Mueller} and \cite{Anna}. Here it is shown that minimizers exist either as $W^{2,2}\cap W_0^{1,2}$-graphs or in a slightly larger class, roughly speaking, the class of graphs that may have vertical parts at the boundary.    
\end{remark}

\subsection{Examples for all possible asymptotics}

The rest of this article will be dedicated to the question whether case $(1)$ in Theorem \ref{thm:subcon} really occurs and whether there are conditions that ensure that case $(2)$ occurs. As \cite{Anna} points out, the minimization problem does not possess a solution in the class of symmetric functions for large obstacles.   

\begin{prop}[Symmetry of the Gradient]\label{prop:symgrad}   
Let $u \in H$. Then $\nabla \E(u(1 - \cdot)) = \nabla \E(u) (1- \cdot)$ .
\end{prop}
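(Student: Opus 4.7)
The plan is to exploit the invariance of $\E$ under the reflection isometry $R: H\to H$ defined by $(Ru)(x) := u(1-x)$, and then to differentiate this invariance to transfer it to $\nabla\E$.

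First I would check that $R$ is a well-defined self-adjoint linear involution on $H = W^{2,2}(0,1)\cap W_0^{1,2}(0,1)$ that is isometric with respect to the inner product $(u,v)=\int_0^1 u''v''\,\mathrm{d}x$. The fact that $Ru \in H$ follows because the zero boundary conditions are symmetric. Writing $(Ru)'(x) = -u'(1-x)$ and $(Ru)''(x) = u''(1-x)$ and substituting $y=1-x$ gives $(Ru,Rv) = (u,v)$, and similarly $(Ru,v) = (u,Rv)$, so $R^* = R$. Obviously $R^2 = \mathrm{Id}$.

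Next I would verify the invariance $\E \circ R = \E$. Using the formulas for $(Ru)'$ and $(Ru)''$, the integrand of \eqref{eq:elasenerg} transforms as
\begin{equation*}
\frac{(Ru)''(x)^2}{(1+(Ru)'(x)^2)^{5/2}} = \frac{u''(1-x)^2}{(1+u'(1-x)^2)^{5/2}},
\end{equation*}
and the substitution $y=1-x$ turns the integral over $[0,1]$ into $\E(u)$.

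Finally, I would differentiate: since $R$ is linear, the chain rule applied to $\E \circ R = \E$ yields $D\E(Ru)(R\phi) = D\E(u)(\phi)$ for every $\phi \in H$. Translating into gradients via the defining relation $(\nabla\E(w),\psi) = D\E(w)(\psi)$ and using self-adjointness of $R$, one obtains
\begin{equation*}
(\nabla\E(Ru),\phi) = (\nabla\E(Ru), R(R\phi)) = D\E(Ru)(R\phi) = D\E(u)(\phi) = (\nabla\E(u),\phi),
\end{equation*}
wait — better: write $(\nabla\E(Ru),\phi) = D\E(Ru)(\phi) = D\E(Ru)(R(R\phi)) = D\E(u)(R\phi) = (\nabla\E(u),R\phi) = (R\nabla\E(u),\phi)$. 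Since this holds for all $\phi \in H$, we conclude $\nabla\E(Ru) = R\nabla\E(u)$, which is exactly the stated identity. There is no real obstacle here; the only point requiring a moment of care is the bookkeeping with $R$ when translating between the derivative and the gradient, which is resolved by $R^2 = \mathrm{Id}$ and $R^* = R$.
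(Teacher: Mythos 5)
Your proposal is correct and follows essentially the same route as the paper: the paper differentiates the invariance $\E(u+t\phi)=\E(u(1-\cdot)+t\phi(1-\cdot))$ at $t=0$ and then uses the substitution rule, which is precisely your chain-rule identity $D\E(Ru)(R\phi)=D\E(u)(\phi)$ combined with the self-adjointness of the reflection $R$. Your corrected bookkeeping chain $(\nabla\E(Ru),\phi)=D\E(Ru)(R(R\phi))=D\E(u)(R\phi)=(R\nabla\E(u),\phi)$ is the right one, so the argument goes through.
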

\begin{proof}
Fix $u \in H$ and let $\phi \in H$ be arbitrary. 
Observe that
\begin{equation*}
 \E(u + t \phi) = \E(u(1 - \cdot) + t \phi( 1- \cdot) ).
 \end{equation*}
One can take the derivative of both sides with respect to $t$ and evaluate at $t = 0$ to find that 
$
(\nabla \E(u) , \phi) = ( \nabla \E(u(1 - \cdot)), \phi(1- \cdot) ) .
$
Writing the right hand side as an integral and using the substitution rule, one obtains
\begin{equation*}
(\nabla \E(u) , \phi) = ( \nabla \E(u(1- \cdot) ) (1- \cdot) , \phi) \quad \forall \phi \in H.
\end{equation*}
Therefore $\nabla \E(u) =  \nabla \E(u(1- \cdot) ) (1- \cdot) $ and the claim follows. 
\end{proof}

\begin{lemma}[Symmetry Preservation]  \label{lem:sympres}
Suppose that $\psi \in C^0([0,1])$ is a symmetric obstacle, i.e. $\psi(x) = \psi(1-x)$ for all $x \in (0,1)$. 
Let $u_0 \in W^{3,\infty}(0,1) \cap H $ be such that $u_0''(0) = u_0''(1) = 0$ and $u_0(x) = u_0(1-x)$ for all $x \in (0,1)$. Let $(u(t))_{ t \geq 0 }$ be an Obstacle Gradient Flow. Then $u(t) (1- \cdot) = u(t) $ for all $t \geq 0 $. 
\end{lemma}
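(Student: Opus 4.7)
The natural approach is to define the reflected trajectory $v(t) := u(t)(1-\cdot)$, verify that it is itself an Obstacle Gradient Flow with the same initial datum $u_0$, and then invoke the uniqueness statement in Proposition \ref{prop:Wellposed} to conclude $u \equiv v$. The elastic energy satisfies both Assumption \ref{ass:main} and Assumption \ref{ass:growth} by Proposition \ref{prop:3.4}, so uniqueness is indeed available.

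The key structural observation is that the reflection map $R: H \to H$, $R\phi := \phi(1-\cdot)$, is an isometric involution on $H = W^{2,2}(0,1) \cap W_0^{1,2}(0,1)$. This is immediate from $(R\phi)''(x) = \phi''(1-x)$ and the change-of-variables formula, which give $(R\phi, R\eta) = (\phi,\eta)$ for all $\phi,\eta \in H$, hence also $(R\phi,\eta) = (\phi,R\eta)$. Moreover, symmetry of $\psi$ ensures $R(C) = C$, because $w \geq \psi$ a.e.\ is equivalent to $w(1-\cdot) \geq \psi(1-\cdot) = \psi$ a.e.

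Now I verify the three defining properties of Definition \ref{def:coneflow} for $v$. Initial datum: $v(0) = Ru_0 = u_0$ by the symmetry assumption on $u_0$. Regularity: $v(t) \in C$ because $R(C) = C$, and $v \in W^{1,2}((0,T),H)$ with $\dot v(t) = R\dot u(t)$ since $R$ is a bounded linear map on $H$, so it commutes with the $H$-valued weak time derivative. For the Flow Variational Inequality, Proposition \ref{prop:symgrad} gives $\nabla \E(v(t)) = \nabla \E(Ru(t)) = R\nabla\E(u(t))$, so for any $w \in C$ we have $Rw \in C$, and using the self-adjointness of $R$ with respect to $(\cdot,\cdot)$,
\begin{align*}
(\dot v(t), w - v(t)) + (\nabla\E(v(t)), w - v(t)) &= (R\dot u(t), w - Ru(t)) + (R\nabla\E(u(t)), w - Ru(t)) \\
&= (\dot u(t), Rw - u(t)) + (\nabla\E(u(t)), Rw - u(t)) \geq 0,
\end{align*}
where the last inequality is the $(FVI)$ for $u$ applied to the admissible test function $Rw \in C$. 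Thus $v$ is an Obstacle Gradient Flow with initial datum $u_0$. Proposition \ref{prop:Wellposed} gives $u \equiv v$, i.e.\ $u(t)(1-\cdot) = u(t)$ for all $t \geq 0$.

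I do not expect any real obstacle here; the only subtle points are to recognize that $R$ is a linear $H$-isometry (so time-derivatives and the change-of-variables in the inner product behave trivially) and to invoke the gradient-symmetry identity of Proposition \ref{prop:symgrad}. Everything else is a direct rewriting of $(FVI)$.
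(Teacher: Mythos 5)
Your proposal is correct and follows essentially the same route as the paper: reflect the trajectory, verify the three conditions of Definition \ref{def:coneflow} (using the symmetry of $\psi$ and $u_0$, the substitution rule for the inner product, and Proposition \ref{prop:symgrad} for the gradient, testing $(FVI)$ with the reflected test function), and conclude by the uniqueness statement of Proposition \ref{prop:Wellposed}. Your explicit observation that the reflection is a self-adjoint linear isometry of $H$ is just a cleaner packaging of the paper's change-of-variables computation.
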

\begin{proof}
We show that $(u(t) (1- \cdot))_{t \geq 0 }$ is also an Obstacle Gradient Flow with intital datum $u_0$. Equality follows then from the uniqueness result in Proposition \ref{prop:Wellposed}. We check the conditions required in Definition \ref{def:coneflow}. Condition $(1)$ follows from the symmetry of $u_0$. For condition $(2)$ observe that for fixed $t \geq 0 $ one has $ u(t)(1- \cdot) \geq \psi(1- \cdot) = \psi$  because of symmetry of $\psi$. Therefore $u(t) (1- \cdot) \in C$ for all $ t \geq 0 $. It is straightforward to check that $u(t)(1-\cdot) \in W^{1,2}((0,T),H)$ with weak time derivative coinciding with $\dot{u}(t)(1- \cdot)$ almost everywhere. To verify $(3)$ we check $(FVI)  $. Let $v \in C$ be arbitrary and $t > 0 $ be such that $(FVI)$ holds. Using the substitution rule and Proposition \ref{prop:symgrad}  
\begin{align*}
& (\dot{u}(t) (1-\cdot) , v- u(t) (1- \cdot) ) + ( \nabla \E(u(t)(1-\cdot) ), v- u(t)(1-\cdot)) \\ 
& = (\dot{u}(t) , v(1- \cdot) - u(t) )  + ( \nabla \E(u(t)(1-\cdot))(1-\cdot) , v (1- \cdot) - u(t) ) 
\\ & = (\dot{u}(t) , v(1- \cdot) - u(t) )  + ( \nabla \E(u(t)(1-\cdot))(1-\cdot) , v (1- \cdot) - u(t) ) 
\\ & = (\dot{u}(t) , v(1- \cdot) - u(t) )  + ( \nabla \E(u(t)) , v (1- \cdot) - u(t) ) \geq 0 ,
\end{align*}
 since $v(1- \cdot ) \in C$ because of symmetry of $\psi$. The claim follows.
\end{proof}

\begin{remark}
The observation that symmetry is preserved by the flow is of special interest in the context of higher order variational problems, where symmetry of minimizers is often difficult to obtain. This is due to the lack of a maximum principle. 
\end{remark}

\begin{cor}[A Condition Ensuring Subconvergence]\label{cor:4.16}
Suppose that $\psi\in C^0 ([0,1]) \cap C^1([0, \eta ]) \cap C^1 ( [1- \eta , 1])$ for some $\eta > 0 $ and $\psi = \psi(1- \cdot)$. Let $u_0 \in W^{3,\infty}(0,1)\cap H$ satisfy $u_0''(0)= u_0''(1)= 0 $ and be symmetric with
$
\E(u_0) < c_0^2.
$
Then the Obstacle Gradient Flow starting at $u_0$ subconverges, that is case $(2)$ in Theorem \ref{thm:subcon} applies. 
\end{cor}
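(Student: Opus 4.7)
\emph{Proof plan.}
The plan is to rule out alternative~(1) of Theorem~\ref{thm:subcon} by producing a time-independent upper bound on $\|u(t)'\|_\infty$; the dichotomy of Theorem~\ref{thm:subcon} then forces alternative~(2), namely subconvergence to a critical point.

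First I would invoke Lemma~\ref{lem:sympres} to propagate the symmetry of $u_0$: since $u_0$ and $\psi$ are both symmetric and the regularity hypotheses on $u_0$ match those of that lemma, $u(t)(x)=u(t)(1-x)$ for every $t\geq 0$. Differentiating yields the antisymmetry $u'(t)(1-x)=-u'(t)(x)$, and since the integrand defining $G$ in \eqref{eq:G} is even in $w$, $G$ itself is odd, so $G(u'(t)(1-x))=-G(u'(t)(x))$ almost everywhere. Next I would feed this antisymmetry into Proposition~\ref{prop:cauch} with the symmetric choice $x_1,x_2\in\{x,1-x\}$, $x_1>x_2$: then $x_1-x_2=|1-2x|\leq 1$ and $G(u'(t)(x_1))-G(u'(t)(x_2))=\pm 2\,G(u'(t)(x))$, so combining with the energy dissipation $\E(u(t))\leq\E(u_0)$ from Proposition~\ref{prop:enrggdisp} gives
\begin{equation*}
4\,G(u'(t)(x))^2 \;\leq\; |1-2x|\,\E(u(t)) \;\leq\; \E(u_0) \;<\; c_0^2,
\end{equation*}
whence $|G(u'(t)(x))|\leq\tfrac{1}{2}\sqrt{\E(u_0)}<c_0/2$ uniformly in $x\in(0,1)$ and $t\geq 0$.

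Finally I would invert. Because $G\colon\mathbb{R}\to(-c_0/2,c_0/2)$ is a diffeomorphism (Remark~\ref{rem:interestin}) and the preceding bound confines $G(u'(t)(x))$ to a closed sub-interval of the image of $G$, its preimage is a compact interval $[-M,M]$ with $M=M(\E(u_0),c_0)<\infty$, so $\|u'(t)\|_\infty\leq M$ for all $t\geq 0$. This eliminates alternative~(1) of Theorem~\ref{thm:subcon}, and alternative~(2) then supplies the desired subconvergent subsequence. I do not foresee a genuine obstacle here; the whole argument hinges on the single observation that the flow's preserved symmetry, via the oddness of $G$, extracts a factor of $4$ (rather than the naive $1$ one would obtain by choosing $x_2=1/2$) from Proposition~\ref{prop:cauch}, which is precisely the factor needed to turn the hypothesis $\E(u_0)<c_0^2$ into a uniform $L^\infty$ bound on $u'$.
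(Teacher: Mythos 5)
Your proposal is correct and follows essentially the same route as the paper: propagate symmetry via Lemma \ref{lem:sympres}, apply Proposition \ref{prop:cauch} at the symmetric pair $x,1-x$ together with energy dissipation and the oddness of $G$ to get $4G(u(t)'(x))^2\leq \E(u_0)<c_0^2$, and invert $G$ using Remark \ref{rem:interestin} to obtain the uniform bound on $\|u(t)'\|_\infty$ excluding case $(1)$. No gaps.
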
 
\begin{proof}
To show that case $(2)$ in Theorem \ref{thm:subcon} applies, we exclude case $(1)$.
Let $(u(t))_{t \geq 0 } $ be the Obstacle Gradient Flow with initial datum $u_0$. Fix $x \in (0,\frac{1}{2})$.  Then using \eqref{eq:cauch} with $x_1 = x, x_2 = 1- x $ we obtain 
\begin{align*}
\E(u_0) &  \geq \E(u(t)) \geq \frac{1}{1-2x} (G(u(t)'(x))- G(u(t)'(1-x)))^2 \\ &  \geq (G(u(t)'(x))- G(u(t)'(1-x)))^2
\end{align*}
Since according to Lemma \ref{lem:sympres} $u(t) = u(t)(1- \cdot)$ we find that $u(t)'(1-x) = - u(t)(1- \cdot)' (x) = - u(t)'(x) $ and hence 
$
 \E(u_0) \geq  4 G(u(t)'(x) )^2 . 
$
Since $G$ is an odd  function, one has
\begin{equation}
G(| u(t)'(x)| ) = |G(u(t)'(x)) | \leq \frac{1}{2}\sqrt{\E(u_0)} < \frac{c_0}{2}
\end{equation} 
In particular, since $G$ is monotone and $G: (-\infty, \infty) \rightarrow ( -\frac{c_0}{2}, \frac{c_0}{2} )$ is a diffeomorphism, see Remark \ref{rem:interestin}, we find 
\begin{equation*}
|u(t)'(x)| \leq G^{-1} \left( \frac{\sqrt{\E(u_0)}}{2} \right) \quad \forall x \in \left[0, \nicefrac{1}{2} \right]  .
\end{equation*}
Since $u(t)'(x) = - u(t)'(1-x)$ we find 
\begin{equation*}
\sup_{t \in [0, \infty) } ||u(t)'||_{L^\infty(0,1)} \leq  G^{-1} \left( \frac{\sqrt{\E(u_0)}}{2} \right),
\end{equation*}
which excludes case $(1)$ in Theorem \ref{thm:subcon}.   
\end{proof}

\begin{remark}\label{rem:funkschar}
\end{remark}
\vspace{-0.63cm}
 \hspace{1.68cm} Of course it could happen that a symmetric function $u_0$ like in Corollary \ref{cor:4.16} does not exist. The following Lemma is to convince the reader that for small obstacles, such $u_0$ exists. The appropriate smallness condition is the  same as in \cite[Lemma 4.2]{Anna}. For the condition, an important comparision function is needed: 
\begin{equation*}
U_0(x) := \begin{cases}
\frac{2}{c_0 \sqrt[4]{1 + G^{-1} ( \frac{c_0}{2} - c_0 x )^2 }} & x \in (0,1), \\ 0  & x = 0,1 .
\end{cases} 
\end{equation*}  
\begin{wrapfigure}{r}{5.55cm}
\centering
\includegraphics[width=5.0cm, height=2.9cm]{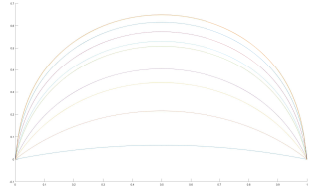}  
\caption{Some $u_c$ for $c=\tfrac{1}{2},\tfrac{3}{2},2.0,2.15,2.3,2.32,2.35,2.37,2.39$}
  \label{fig:amazon_result}
\end{wrapfigure} 
 For details on those see \cite{Djondjorov} and \cite{Deckelnick}.
 The reason why $U_0$ is so important is that $U_0$ is a graph reparametrization of one of Euler's free elastica, i.e. the critical points of  \eqref{eq:willmore}.
In the latter article, symmetric critical points of \eqref{eq:willmore} that possess a graph reparametrization are investigated. The authors show that all symmetric critical graphs are given by a family $(u_c)_{c \in (0,c_0)}$  that approximates $U_0$ from below, see Figure  \ref{fig:amazon_result}.
In particular, no symmetric critical graph can be found above $U_0$.


\begin{lemma}[Existence of Symmetric Graphs with Small Energy] 
Suppose that $\psi \in C^0([0,1])$ is symmetric. Let 
\begin{equation*}
C_{\mathrm{sym,reg}} := \{ w \in W^{3, \infty}(0,1) \cap H : w \geq \psi, w = w(1- \cdot), w''(0) = w''(1) = 0 \} .
\end{equation*}
 Then 
\begin{equation}\label{eq:energung}
\inf_{w \in C_{\mathrm{sym,reg}}  } \E(w) \leq c_0^2 .
\end{equation}
 If additionally $\psi(x) < U_0(x) $ for each $x \in [0,1]$, where $U_0$ is given in Remark \ref{rem:funkschar}, then there exists $v \in C_{\mathrm{sym,reg}}$ such that $\E(v) < c_0^2$. 
\end{lemma}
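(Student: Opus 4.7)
My plan is to use the family $(u_c)_{c \in (0, c_0)}$ of symmetric critical graphs from Remark \ref{rem:funkschar} as building blocks. Each $u_c$ is smooth on $[0,1]$ by elliptic regularity applied to its Euler--Lagrange equation, symmetric by construction, and — as an \emph{unconstrained} critical point of $\E$ over $H$ — automatically satisfies the natural Navier boundary conditions $u_c''(0) = u_c''(1) = 0$. Hence every $u_c$ already fulfills all requirements of $C_{\mathrm{sym,reg}}$ \emph{except} possibly the obstacle constraint $u_c \geq \psi$. Moreover, $u_c \nearrow U_0$ pointwise on $(0,1)$ as $c \to c_0$, and $\E(u_c) \to c_0^2$ from below, so $\E(u_c) < c_0^2$ for each $c < c_0$.

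For the stronger second claim, I would assume $\psi < U_0$ on $[0,1]$ and show that $u_c \in C_{\mathrm{sym,reg}}$ for $c$ close enough to $c_0$. Near the endpoints, using $\psi(0), \psi(1) < 0$ and continuity of $\psi$, I can pick $\delta > 0$ such that $\psi < 0$ on $[0, \delta] \cup [1-\delta, 1]$; since $u_c \geq 0$ everywhere (as graphs of free Euler elasticae, cf.\ Remark \ref{rem:funkschar}), $u_c > \psi$ on the boundary strips automatically. On the compact middle set $[\delta, 1-\delta]$, compactness yields $\mu := \min(U_0 - \psi) > 0$, and Dini's theorem promotes the pointwise monotone convergence $u_c \nearrow U_0$ (of continuous functions to a continuous limit on a compact set) to uniform convergence there. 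Thus $u_c \geq U_0 - \mu/2 > \psi$ on $[\delta, 1-\delta]$ for $c$ close enough to $c_0$. For such a $c$, the function $v := u_c$ lies in $C_{\mathrm{sym,reg}}$ and satisfies $\E(v) < c_0^2$.

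For the main inequality \eqref{eq:energung} with only symmetry assumed on $\psi$: in the trivial case $\psi \leq 0$ on $[0,1]$, the constant $w \equiv 0$ lies in $C_{\mathrm{sym,reg}}$ with $\E(0) = 0 \leq c_0^2$ and we are done. Otherwise, my plan is to construct, for each $\varepsilon > 0$, an admissible $w_\varepsilon \in C_{\mathrm{sym,reg}}$ with $\E(w_\varepsilon) \leq c_0^2 + \varepsilon$, by starting from $u_c$ (with $c$ chosen so that $\E(u_c) \leq c_0^2 - \varepsilon/2$) and lifting it above $\psi$ via a symmetric smooth correction supported away from the endpoints. The principal technical obstacle lies in controlling the additional elastic energy incurred by this correction: because $\E$ is nonlinear in $u$, simple additive bounds do not apply, so one expects to use a delicate localization and scaling estimate for $\int \kappa^2\, ds$ on the correction's support, in the spirit of \cite[Lemma 4.2]{Anna}. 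Verifying that this modification preserves $W^{3,\infty}$-regularity and the Navier boundary conditions while perturbing the energy only by $O(\varepsilon)$ is the hardest step.
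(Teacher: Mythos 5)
Your treatment of the second claim is fine and in fact more direct than the paper's: you use the critical graphs $u_c$ themselves for $c$ close to $c_0$, relying on their smoothness, positivity and Navier conditions $u_c''(0)=u_c''(1)=0$ (true, but you should source these to \cite{Deckelnick}/\cite[Lemma 2.3]{Anna} rather than assert them), whereas the paper fixes one $\overline{c}$ with $u_{\overline{c}}>\psi+\epsilon$ and then replaces $u_{\overline{c}}$ by solutions $v_n$ of $v_n''=\phi_n$ with symmetric $\phi_n\in C_0^\infty(0,1)$, $\phi_n\to u_{\overline{c}}''$ in $L^2$; the compactly supported second derivative hands you $W^{3,\infty}$-regularity and $v_n''(0)=v_n''(1)=0$ for free. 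Your Dini argument is also unnecessary, since \cite[Lemma 2.3]{Anna} already gives uniform convergence $u_c\to U_0$ on $[0,1]$, but it is not wrong.

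The genuine gap is in the first claim \eqref{eq:energung}, which must hold for \emph{every} symmetric continuous obstacle, in particular arbitrarily tall ones, and there your strategy cannot work as budgeted. If $\max\psi = M$ is large, any admissible $w$ satisfies $w(\tfrac12)\geq M$ and $w(0)=w(1)=0$, so by the mean value theorem $w'\geq 2M$ somewhere in $(0,\tfrac12)$ and $w'\leq -2M$ somewhere in $(\tfrac12,1)$; Proposition \ref{prop:cauch} then gives $\E(w)\geq 4G(2M)^2$, which tends to $c_0^2$ as $M\to\infty$. Hence the excursion over the obstacle by itself consumes essentially the whole budget $c_0^2$, and there is no room for a decomposition ``base profile $u_c$ with $\E(u_c)\leq c_0^2-\varepsilon/2$ plus a correction costing $O(\varepsilon)$'': keeping any fixed amount of energy in a base curve and adding a lift whose cost is close to $c_0^2$ overshoots the bound, so the additive bookkeeping you propose fails structurally, not just because $\E$ is nonlinear. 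What is actually needed (and what the paper does) is to abandon the $u_c$ entirely for this part and use the special symmetric comparison functions $v_\delta$ of \cite[Equation (2.10)]{Anna}, which exceed any obstacle for $\delta$ small (their maximum blows up) while $\E(v_\delta)\leq c_0^2+o(1)$, because they spend essentially all their energy on a nearly vertical ascent and descent; one then applies the same compact-support-second-derivative approximation as in the first half of the proof to obtain competitors in $C_{\mathrm{sym,reg}}$. You acknowledge this step is missing and point to \cite[Lemma 4.2]{Anna}, but that lemma concerns the smallness condition $\psi<U_0$, not this construction; as written, the proof of \eqref{eq:energung} beyond the trivial case $\psi\leq 0$ is absent.
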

\begin{proof} The proof uses ideas from \cite[Lemma 2.4]{Anna} and \cite[Lemma 4]{Deckelnick} with just slight modifications. We start with the last part of the claim. To this, suppose that $\psi < U_0$. Note that according to  \cite[Lemma 2.3]{Anna}, $U_0$ is the uniform limit of a family of symmetric functions $(u_c)_{c \in (0,c_0)} \subset C^\infty([0,1]) \cap H $ that is increasing in $c$ and satisfies
$
\E(u_c) = c^2.
$
Because of continuity of $\psi$ and $U_0$ as well as uniform convergence of $(u_c)_{c \in (0,c_0)}$ there exists $\epsilon> 0 $ and $\overline{c} \in (0,c_0)$ such that $\psi(x) + \epsilon < u_{\overline{c}}(x) $ for each $x \in [0,1]$. Now take $\phi_n \subset C_0^\infty(0,1)$ symmetric such that $\phi_n \rightarrow u_{\overline{c}}''$ in $L^2(0,1)$. For $n \in \mathbb{N}$, let $v_n$ denote the unique (and symmetric) solution of 
\begin{equation*}
\begin{cases} 
v_n'' = \phi_n & x \in(0,1), \\ v_n = 0 & x = 0,1.  
\end{cases} 
\end{equation*}
Then $v_n \in H$ since by elliptic regularity  $v_n \in W^{2,2}(0,1) \cap W_0^{1,2}(0,1)$.
Now by definition of the norm in $H$ one has $|| v_n - u_{\overline{c}}||_H = || \phi_n - u_{\overline{c}} '' ||_{L^2} \rightarrow 0 $ and hence $v_n \rightarrow u_{\overline{c}}$ in $W^{2,2}(0,1)$ as $n \rightarrow \infty$. Since $W^{2,2}(0,1)$ embeds into $C^0([0,1])$ the convergence is also uniform and therefore there exists $n_0 \in \mathbb{N}$ such that for each $n \geq n_0$, $v_n > u_{\overline{c}} - \epsilon > \psi$ because of the choice of ${\overline{c}}, \epsilon$. Observe that 
$
\lim_{n \rightarrow \infty} \E(v_n) = \E(u_{\overline{c}}) = {\overline{c}}^2 < c_0^2  .
$
Therefore there exists also some $n_1 \in \mathbb{N}$ stuch that $\E(v_n) < c_0^2$ for each $n \geq n_1$. The claim follows taking $v = v_{\max\{n_0, n_1\}}$, which is $W^{3,\infty}$ and satisfies $v''(0) = v''(1) = 0$ as the second derivative of $v$ is compactly supported. It follows that $v \in C_{\mathrm{sym,reg}}$ which proves the last part of the claim. To show \eqref{eq:energung} one can use a similar approximation of the functions $v_\delta$ given in \cite[Equation (2.10)]{Anna} which exceed any obstacle for $\delta$ small enough and satisfy $\E(v_\delta) \leq c_0 + o(\delta)$.
\end{proof}
In the rest of the article we construct a special class of (large) obstacles for the flow can not subconverge.  
\begin{definition} [Cone Obstacle] 
Let $A > 0 $ and $a \in (0,\frac{1}{2})$. Define 
\begin{equation*}
\psi_{A,a}(x) := \begin{cases}
A(x-a) & x \in (0,\frac{1}{2}],
\\
A(1-x-a) & x \in (\frac{1}{2},1).
\end{cases} 
\end{equation*}
\end{definition}
\begin{remark}
For $\psi = \psi_{A,a}$, all the assumptions of Theorem \ref{thm:subcon} are satisfied with $\eta = \frac{1}{2}$ 
\end{remark}
\begin{lemma}[Energy and Contact Set]
Let $\psi= \psi_{A,a}$ for some $A  > 0$, $a \in (0, \frac{1}{2})$  and  let $u \in H$ be symmetric and such that $u \geq \psi $ almost everywhere. If there is $x_0 \neq \frac{1}{2}$ such that $u(x_0) = \psi(x_0)$, then
 \begin{equation}\label{eq:groen}
 \E(u) \geq 4G(A)^2 \min \left\lbrace \frac{1}{2a}, \frac{1}{1-2a} \right\rbrace.
\end{equation}  
\end{lemma}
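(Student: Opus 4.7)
The plan is to combine the contact derivative condition at $x_0$ with the symmetry $u(x) = u(1-x)$, applying the Cauchy--Schwarz bound \eqref{eq:cauch} of Proposition \ref{prop:cauch} on intervals whose endpoints have known values of $u'$, and splitting into cases according to whether $x_0$ lies to the right or to the left of $a$. By symmetry I may assume $x_0 \in (0, \tfrac{1}{2})$. Since $u' \in C([0,1])$ (as $u \in W^{2,2}$) and $u-\psi$ attains a global minimum at $x_0$ with $\psi$ linear on $(0,\tfrac12)$, the contact condition gives $u'(x_0) = \psi'(x_0) = A$, and then $u(x) = u(1-x)$ forces $u'(1-x_0) = -A$.

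If $x_0 \in [a, \tfrac{1}{2})$, I would apply \eqref{eq:cauch} to the pair $(x_0, 1-x_0)$; using that $G$ is odd this gives $\E(u) \geq \frac{(G(A)-G(-A))^2}{1-2x_0} = \frac{4G(A)^2}{1-2x_0} \geq \frac{4G(A)^2}{1-2a}$, which matches the second term in the minimum.

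If instead $x_0 \in (0, a)$, then $u(x_0) = A(x_0-a) < 0 = u(0)$: the mean value theorem produces $\xi \in (0, x_0)$ with $u'(\xi) < 0$, and then the intermediate value theorem applied to the continuous function $u'$ (using $u'(x_0) = A > 0$) yields $\xi^* \in (\xi, x_0)$ with $u'(\xi^*) = 0$; by symmetry $u'(1-\xi^*) = 0$ as well. The intervals $[\xi^*, x_0]$ and $[1-x_0, 1-\xi^*]$ are disjoint (since $x_0 < \tfrac{1}{2}$), so splitting $\E(u) = \int_0^1 G'(u')^2 (u'')^2 \, dx$ across them and applying Cauchy--Schwarz separately on each piece yields the doubled bound
\begin{equation*}
\E(u) \geq \frac{G(A)^2}{x_0 - \xi^*} + \frac{G(A)^2}{x_0 - \xi^*} = \frac{2G(A)^2}{x_0 - \xi^*} \geq \frac{2G(A)^2}{x_0} \geq \frac{2G(A)^2}{a} = \frac{4G(A)^2}{2a},
\end{equation*}
which matches the first term in the minimum.

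The only subtle point is this factor-of-two improvement in the second case: a single application of Proposition \ref{prop:cauch} would yield only $\E(u) \geq G(A)^2/a$, a factor of $4$ too weak. The gain comes from integrating over the two symmetric subintervals simultaneously, which is legitimate because $\E(u)$ is itself an integral over all of $(0,1)$ and because the disjointness of $[\xi^*, x_0]$ and $[1-x_0, 1-\xi^*]$ is built into the hypothesis $x_0 \neq \tfrac12$.
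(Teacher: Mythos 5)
Your proposal is correct and follows essentially the same route as the paper's proof: the same case split at $x_0 = a$, the same use of the contact condition $u'(x_0)=\psi'(x_0)=A$ together with symmetry, the single application of \eqref{eq:cauch} on $(x_0,1-x_0)$ in the first case, and in the second case the same construction of an interior zero of $u'$ followed by summing the Cauchy--Schwarz bound over the two disjoint symmetric intervals to obtain the factor $2G(A)^2/(x_0-\eta)$. Nothing further is needed.
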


\begin{proof}
Note that $u(x_0) = \psi(x_0)$ and symmetry of $u, \psi$ imply that $u(1- x_0) = \psi(1-x_0) $. Therefore one can assume without loss of generality that $x_0 \in (0, \frac{1}{2}) $. We distinguish two cases, namely $x_0 \geq a $ or $x_0 < a $. If $x_0 \geq a$ one obtains using \eqref{eq:cauch} with $x_1 = x_0$ and $x_2 = 1-x_0$ that 
\begin{equation*}
\E(u) \geq  \frac{(G(u'(x_0)) - G(u'(1-x_0)))^2 }{1 - 2x_0}.
\end{equation*}
Also note that $u'(x_0) = \psi'(x_0) = A $ and $u'(1-x_0) = \psi'(1-x_0) = - A$ which is due to the fact that $u- \psi$ is $C^1$ in a neighborhood of $x_0$  and attains its global minimum at $x_0$. Hence 
$
\E(u) \geq  \frac{4G(A)^2 }{1 - 2x_0} \geq \frac{4G(A)^2}{1- 2a } ,
$ the claim in this case.
Now suppose that $x_0 < a$. Then $u(x_0) = \psi(x_0)= A(x_0- a)  <0 $. Note that $u'(x_0) = \psi'(x_0) = A > 0 $ and since $u(x_0) < u(0)$ there has to be $\eta' \in (0,x_0)$ such that $u'(\eta') < 0 $. Because of the intermediate value theorem there exists $\eta \in (\eta',x_0)$ such that $u'( \eta) = 0 $. Hence $u'(x_0) = - u'(1-x_0) =A $ and $u'(\eta) = -u'(1-\eta) = 0 $. Using this and the Cauchy Schwarz inequality like in the proof of Proposition \ref{prop:cauch} we find
\begin{align*}
\E(u) & \geq \int_{\eta}^{x_0} \frac{u''(x)^2}{(1+ u'(x)^2)^\frac{5}{2}} \; \mathrm{d}x + \int_{1-x_0}^{1-\eta} \frac{u''(x)^2}{(1+ u'(x)^2)^\frac{5}{2}} \; \mathrm{d}x 
\\ & \geq \frac{(G(u'(\eta)) - G(u'(x_0)))^2 }{x_0 - \eta } + \frac{(G(u'(1- \eta)) - G(u'(1- x_0)) )^2 }{x_0 - \eta  } \\ & = \frac{2 G(A)^2 }{x_0 - \eta } \geq \frac{4 G(A)^2}{2x_0} \geq \frac{4G(A)^2}{2a}. \qedhere
\end{align*}
\end{proof}
\begin{cor}[Non-Subconverging Solutions] \label{cor:4.23}
Suppose that $\psi = \psi_{A,\frac{1}{4}}$ with  
\begin{equation}\label{eq:5.109}
A  > \max\left\lbrace G^{-1} ( \tfrac{c_0}{\sqrt{6}})  , \tfrac{8}{c_0},  4 \sup_{B > 0 }  \frac{1}{3}B \frac{\mathit{HYP2F1}( 1, \frac{3}{2} ; \frac{7}{4} , -B^2) }{\mathit{HYP2F1}(\frac{1}{2},1, \frac{3}{4}, -B^2)}  \right\rbrace,
\end{equation}
where $\mathit{HYP2F1}$ denotes the hypergeometric function in \cite[Definition 2.1.5]{Andrews}. 
Let  $u_0 \in H \cap W^{3,\infty}(0,1)$ be symmetric such that $u_0''(0) = u_0''(1) = 0$ and $\E(u_0) < \frac{4}{3}c_0^2$. Then there is no symmetric $u \in H \cap W^{3,2}(0,1)$ such that $u \geq \psi$, $u''(0) = u''(1) = 0 $, $\E(u) \leq  \E(u_0)$ and 
\begin{equation}\label{eq:variationalineq}
(\nabla \E(u) , v- u) \geq 0 \quad \forall \; v \in C.
\end{equation}
 In particular, for the Obstacle Gradient Flow with initial data $u_0$, case (2) in Theorem \ref{thm:subcon} is excluded.
\end{cor}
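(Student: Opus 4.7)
The plan is to argue by contradiction: assume such a symmetric critical point $u$ exists, and analyze the contact set $\Lambda := \{x \in (0,1) : u(x) = \psi(x)\}$. By symmetry of both $u$ and $\psi$, $\Lambda$ is invariant under $x \mapsto 1-x$, so three mutually exhaustive cases arise: (i) $\Lambda$ contains a point $x_0 \neq 1/2$, (ii) $\Lambda = \emptyset$, or (iii) $\Lambda = \{1/2\}$. The three bounds in \eqref{eq:5.109} are designed precisely to close off one case each.

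For case (i), I would apply the preceding lemma with $a = 1/4$, which yields $\E(u) \geq 4 G(A)^2 \min\{1/(2a), 1/(1-2a)\} = 8 G(A)^2$. The first bound in \eqref{eq:5.109} together with strict monotonicity of $G$ gives $G(A) > c_0/\sqrt{6}$, so $\E(u) > \tfrac{4}{3} c_0^2 > \E(u_0)$, contradicting $\E(u) \leq \E(u_0)$. For case (ii), since $u > \psi$ strictly one can test the variational inequality with $u \pm \varepsilon\phi$ for arbitrary $\phi \in H$ and small $\varepsilon > 0$, forcing $\nabla \E(u) = 0$. Then $u$ is an unconstrained symmetric critical point satisfying Navier conditions, hence by Remark \ref{rem:funkschar} it belongs to the family $(u_c)_{c \in (0, c_0)}$, each of which is dominated pointwise by $U_0$. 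In particular $u(1/2) \leq U_0(1/2) = 2/c_0$, whereas $u \geq \psi$ forces $u(1/2) \geq \psi(1/2) = A/4 > 2/c_0$ by the second bound in \eqref{eq:5.109}. Contradiction.

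Case (iii) is the delicate one. On each interval $(0, 1/2)$ and $(1/2, 1)$ the contact condition is inactive, so the variational inequality again reduces to $\nabla \E(u) = 0$ in the interior, and $u|_{(0,1/2)}$ is a graph elastica arc with boundary data $u(0) = 0$, $u''(0) = 0$, $u(1/2) = A/4$, and $u'(1/2) = 0$ (the last forced by symmetry). Introducing the tangent angle $\theta = \arctan u'$ and exploiting the classical first integral of the fourth-order elastica equation, I would reduce the four-point BVP to a one-parameter problem indexed by $B := \|u'\|_{L^\infty(0,1/2)}$. The height $u(1/2) = \int_0^{1/2} u'(x)\, dx$ then becomes a quotient of integrals of the form $\int_0^1 t^{\alpha-1}(1-t^2)^{\beta}(1 + B^2 t^2)^{-\gamma}\, dt$ after the substitution $\sin\theta = B t$. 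Recognizing these as Euler integral representations of the Gauss hypergeometric function yields $A/4 = \tfrac{1}{3} B \cdot \mathit{HYP2F1}(1, \tfrac{3}{2}; \tfrac{7}{4}; -B^2) / \mathit{HYP2F1}(\tfrac{1}{2}, 1; \tfrac{3}{4}; -B^2)$; taking the supremum over $B > 0$ then contradicts the third bound in \eqref{eq:5.109}. This is the step I expect to be the main obstacle: reducing the fourth-order equation to the right quadrature and identifying the resulting period integrals with the precise ${}_2F_1$ ratio written in \eqref{eq:5.109} requires careful computation.

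For the final ``in particular'' assertion, suppose case (2) of Theorem \ref{thm:subcon} applied to the flow starting at $u_0$. The limit $u_\infty$ would then be symmetric (Lemma \ref{lem:sympres} combined with the $W^{2,2}$-convergence), lie in $C \cap W^{3,2}$ with Navier conditions by the weak-$W^{3,2}$ convergence provided in Theorem \ref{thm:navier}, satisfy $\E(u_\infty) \leq \E(u_0)$ by weak lower semicontinuity and monotone energy dissipation (Proposition \ref{prop:enrggdisp}), and solve the stationary variational inequality. This is precisely the configuration excluded above, so case (2) is ruled out.
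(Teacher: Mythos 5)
Your proposal follows essentially the same route as the paper's proof: the same three-way case analysis on the contact set, with the energy lemma (applied with $a=\tfrac14$) excluding contact points other than $\tfrac12$, the classification of unconstrained symmetric critical graphs and the bound $u(\tfrac12)\leq U_0(\tfrac12)=\tfrac{2}{c_0}$ excluding the empty contact set, and the quadrature of the elastica equation together with the hypergeometric identification excluding contact only at $\tfrac12$, plus the same passage to the flow limit for the final assertion. The only step you leave as a sketch — deriving $u(\tfrac12)=\tfrac13\,u'(0)\,{}_2F_1\bigl(1,\tfrac32;\tfrac74;-u'(0)^2\bigr)/{}_2F_1\bigl(\tfrac12,1;\tfrac34;-u'(0)^2\bigr)$ from the first integral $v^2=C(u'-u'(0))$ — is precisely the computation the paper carries out (concavity, separation of variables, and \cite[Lemma C.5]{Mueller}), so your argument is sound and matches the paper's.
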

\begin{remark}
Note that $u_0$ as in the statement of Corollary \ref{cor:4.23} actually exists because of \eqref{eq:energung}. 
\end{remark} 
\begin{proof}[Proof of Corollary \ref{cor:4.23}] Notice that the supremum on the right hand side of \eqref{eq:5.109} is finite by \cite[Proof of Theorem 1.1, Lemma C.6]{Mueller}. Let $A$ be as in the statement and $a = \frac{1}{4}$. Further, we denote $\psi = \psi_{A,a}$. Note that 
\begin{equation}\label{eq:contrapsi}
\psi(\tfrac{1}{2}) = A ( \tfrac{1}{2} - a) >  \max\left\lbrace \tfrac{1}{4} G^{-1} ( \tfrac{c_0}{\sqrt{6}} ) ,  \tfrac{2}{c_0},   \sup_{B > 0 }  \frac{1}{3}B \frac{\mathit{HYP2F1}( 1, \frac{3}{2} ; \frac{7}{4} , -B^2) }{\mathit{HYP2F1}(\frac{1}{2},1, \frac{3}{4}, -B^2)}  \right\rbrace
\end{equation}
 and assume for a contradiction that there exists $u \in H \cap W^{3,2}(0,1)$ symmetric such that $u \geq \psi$, $\E(u) < \frac{4}{3}c_0^2 $, $u''(0)=u''(1)= 0$ and $(\nabla \E(u), v - u ) \geq 0 $ for each $v \in C$.  We distinguish cases analyzing the coincidence set $I:= \{ u = \psi \} $. If $I = \emptyset$ then one obtains from \eqref{eq:variationalineq} with the techniques of \cite[Theorem 6.9 Section 2]{Stampacchia} that $\nabla \E(u) \equiv 0 $. In this case, \cite[Lemma 4]{Deckelnick} implies that there exists $c \in (0,c_0)$ 
\begin{equation*}
u(x) =  \frac{2}{c \sqrt[4]{1 + G^{-1} \left( \frac{c}{2} - cx \right)^2 }} - \frac{2}{c\sqrt[4]{1 + G^{-1} \left( \frac{c}{2}\right)^2}} \quad x \in [0,1].
\end{equation*}
for some $c \in (0,c_0)$. Given this, \cite[Corollary 3]{Deckelnick}  and \eqref{eq:contrapsi} imply that 
\begin{equation*}
u \left( \tfrac{1}{2} \right) < \tfrac{2}{c_0} \leq A \left( \tfrac{1}{2} - a \right) = \psi \left(\tfrac{1}{2} \right)  ,
\end{equation*}
a contradiction to $u \geq \psi$. Now suppose that $I$ contains some point $x_0 \neq \frac{1}{2}$. Then because of \eqref{eq:groen} and $A \geq G^{-1}( \frac{c_0}{
\sqrt{6}} )$ one has
\begin{equation*}
\E(u) \geq 4 G(A)^2 \frac{1}{\nicefrac{1}{2} } = 8 G(A)^2 \geq 8 \tfrac{c_0^2}{6} = \tfrac{4}{3} c_0^2, 
\end{equation*}
which is a contradiction to $\E(u) < \frac{4}{3}c_0^2 $.  The only possibility that remains is $I = \{ \frac{1}{2} \} .$ From here we proceed similar to \cite[Lemma 3.3, Proof of Theorem 1.5]{Mueller}. We first show that there is $C \in \mathbb{R}$ such that 
\begin{equation}
\frac{v'(x)}{(1+ u'(x)^2)^\frac{5}{4}} = C \quad  \textrm{a.e. in } \left(0, \nicefrac{1}{2} \right)  ,  
\end{equation}
\begin{equation}\label{eq:symmi}
\frac{v'(x)}{(1+ u'(x)^2)^\frac{5}{4}} = - C \quad  \textrm{a.e. in } \left( \nicefrac{1}{2} , 1 \right),
\end{equation}
where $v(x) := \frac{u''(x)}{(1+ u'(x)^2)^\frac{5}{4}}.$ Indeed, for $\phi \in C_0^\infty(0, \frac{1}{2})$ and $|\epsilon| < \epsilon_0 $ small enough we have $u + \epsilon \phi \in C$. Therefore
 \begin{align*}
0 & = (\nabla \E(u), \phi) = \int_0^1 \frac{2u'' \phi''}{(1+ u'^2)^\frac{5}{2}} \; \mathrm{d}x - \frac{5 u''^2 u' \phi'}{(1+u'^2)^\frac{7}{2}} \; \mathrm{d}x  \\ & = \int_0^1 \frac{2v\phi'' }{(1+ u'^2)^\frac{5}{4}} \; \mathrm{d}x- \frac{5v^2u' \phi'}{(1+u'^2)} \; \mathrm{d}x  
\\ & = \left[ \frac{2v \phi'}{(1 + u'^2)^\frac{5}{4}} \right]_0^1 - \int_0^1 \phi' \left( \frac{2v' }{(1+u'^2)^\frac{5}{4}} - \frac{5 v u' u''}{(1 + u'^2)^\frac{9}{4}} \right) \; \mathrm{d}x -\int_0^1 \frac{5v^2u' \phi'}{(1+u'^2)} \; \mathrm{d}x .
\end{align*} 
Using $v(0) = \frac{u''(0)}{(1+ u'(0)^2)^\frac{5}{4}}= 0 $, $v(1) = \frac{u''(1)}{(1+ u'(1)^2)^\frac{5}{4}} = 0 $ and 
$
\frac{v u' u''}{(1 + u'^2)^\frac{9}{4}} = \frac{v^2u'}{1+ u'^2}$ almost everywhere
we obtain 
\begin{equation*}
0  = \int_0^1 \frac{2 v'\phi'}{(1+ u'^2)^\frac{5}{4}} \; \mathrm{d}x.
\end{equation*}
Hence there is $C \in \mathbb{R}$ such that 
\begin{equation*}
 \frac{v'(x)}{(1+ u'(x)^2)^\frac{5}{4}}= C \quad \textrm{a.e. in } \left( 0, \nicefrac{1}{2} \right)  .
\end{equation*}
Equation \eqref{eq:symmi} follows by symmetry of $u$. If now $C = 0$ then $v \equiv  const.$ on $(0,1)$, and since $v(0) = v(1) = 0 $, we have $v  \equiv 0 $. As a result $u$ is a line, which is a contradiction to the fact that $u \geq \psi$. Therefore $C \neq 0 $. We show next that $C < 0$. Indeed, $v'(x) = C(1+u'(x)^2)^\frac{5}{4}$ and $v(0) = 0$ imply 
\begin{equation*}
v(x) = C \int_0^x (1+ u'(s)^2)^\frac{5}{4} \; \mathrm{d}s
\end{equation*}
and hence using the definition of $v$,
\begin{equation}\label{eq:concavity}
u''(x) = C (1 + u'(x)^2)^\frac{5}{4} \int_0^x (1 + u'(s)^2)^\frac{5}{4} \; \mathrm{d}s. 
\end{equation}
Integrating once we find that 
\begin{equation*}
u'(x) - u'(0) = \frac{C}{2} \left( \int_0^x (1+ u'(s)^2)^\frac{5}{4} \; \mathrm{d}s \right)^2 .
\end{equation*}
Hence $u'$ is either increasing or decreasing in $(0, \frac{1}{2})$ depending on the sign of $C$. If $u'$ is increasing then $u'(x) \leq u'(\frac{1}{2}) = 0$ for each $x \in (0, \nicefrac{1}{2})$ which implies that $u$ is decreasing in $(0, \nicefrac{1}{2})$. But this makes $u \geq \psi$ impossible. A contradiction. Hence $u'$ is decerasing and therefore $u$ is concave on $(0, \nicefrac{1}{2})$. In particular $u'' \leq 0$ and together with \eqref{eq:concavity}, $C< 0 $. Equation \eqref{eq:concavity} reveals moreover that $u'' < 0$ on $(0,\nicefrac{1}{2})$. By symmetry, $u'' < 0 $ on $(0,1)$, so $u$ is strictly concave on $(0,1)$. Once this is shown, one can compute exactly like in \cite[Lemma 3.3, Theorem 1.1]{Mueller} that
\begin{equation*}
u\left( \tfrac{1}{2} \right) \leq \sup_{B > 0 }  \frac{1}{3}B \frac{\mathit{HYP2F1}( 1, \frac{3}{2} ; \frac{7}{4} , -B^2) }{\mathit{HYP2F1}(\frac{1}{2},1, \frac{3}{4}, -B^2)} .
\end{equation*}
We sketch the arguments for the sake of convenience of the reader. Multiplying $v'(x) = C(1+ u'(x)^2)^\frac{5}{4}$ by $v(x)$ we obtain 
 \begin{equation*}
 v(x) v'(x)  = C u''(x) \quad  \; a.e.\;  0 \leq x \leq \frac{1}{2}.
\end{equation*}  
Integrating we obtain that
\begin{equation*}
v(x)^2 = C (u'(x) - u'(0) ) \quad 0 \leq x \leq \frac{1}{2}
\end{equation*}
and therefore, since $u'$ is strictly decreasing and $u'' < 0$ on $(0,1)$ the only possible sign option is 
\begin{equation*}
 u''(x) = C  \sqrt{u'(0) - u'(x) } (1+ u'(x)^2)^\frac{5}{4}. 
\end{equation*}
In particular, dividing by $ \sqrt{u'(0) - u'(x) } (1+ u'(x)^2)^\frac{5}{4}$ and integrating one obtains 
\begin{equation}\label{eq:5.125}
\int_{u'(0)}^{u'(x)} \frac{1}{\sqrt{u'(0) - z} (1+ z^2)^\frac{5}{4}} \; \mathrm{d}z = Cx .
\end{equation}
Since by symmetry $u'(\frac{1}{2}) = 0 $ one has 
\begin{equation}\label{eq:inteconst}
\frac{C}{2} = \int_{u'(0)}^{0}  \frac{1}{\sqrt{u'(0) - z} (1+ z^2)^\frac{5}{4}} \; \mathrm{d}z .
\end{equation}
Define $F_0(s) := \int_{u'(0)}^{s} \frac{1}{\sqrt{u'(0)- z }(1+ z^2)^\frac{5}{4}} \; \mathrm{d}z $ for $s < u'(0)$.  Then  $F_0$ is increasing and has a $C^1$ inverse $F_0^{-1}$ that satisfies  $u'(x) =F_0^{-1}(Cx)$ according to \eqref{eq:5.125}. Moreover, $F_0(0) = \frac{C}{2}$ and $F_0(u'(0)) = 0$.  Hence 
\begin{equation*}
u(\tfrac{1}{2}) = \int_0^\frac{1}{2} F_0^{-1} (Cx) \; \mathrm{d}x = \frac{1}{C} \int_0^\frac{C}{2} F_0^{-1}(w) \; \mathrm{d}w = \frac{1}{C}\int_{F_0^{-1}(0)}^{F_0^{-1}(\nicefrac{C}{2})} s F_0'(s) \; \mathrm{d}s 
\end{equation*}
Using \eqref{eq:inteconst}, $F_0^{-1}( \nicefrac{C}{2} ) = 0 $ and $F_0^{-1} (0) = u'(0)$ one has 
\begin{equation*}
u(\tfrac{1}{2} ) = \frac{\int_{u'(0)}^{0} \frac{z}{\sqrt{u'(0) - z} (1+ z^2)^\frac{5}{4}} \; \mathrm{d}z}{2 \int_{u'(0)}^{0} \frac{1}{\sqrt{u'(0) - z} (1+ z^2)^\frac{5}{4}} \; \mathrm{d}z}
\end{equation*}
Using \cite[Lemma C.5]{Mueller}  and \eqref{eq:contrapsi} we find 
\begin{equation*}
u( \tfrac{1}{2} ) = \frac{1}{3} u'(0) \frac{\mathit{HYP2F1}(1, \frac{3}{2} ; \frac{7}{4}, -u'(0)^2 )}{\mathit{HYP2F1}(\frac{1}{2}, 1; \frac{3}{4}, -u'(0)^2)} \leq \sup_{B > 0 }  \frac{1}{3}B \frac{\mathit{HYP2F1}( 1, \frac{3}{2} ; \frac{7}{4} , -B^2) }{\mathit{HYP2F1}(\frac{1}{2},1, \frac{3}{4}, -B^2)} < \psi( \nicefrac{1}{2} )
\end{equation*}
and one obtains a contradiction again. The claim follows.
\end{proof}

\begin{remark}
A computer assisted calculation for the expression in \eqref{eq:contrapsi} shows that 
\begin{equation*}
\tfrac{1}{4} G^{-1} ( \tfrac{c_0}{\sqrt{6}} ) <  \sup_{B > 0 }  \frac{1}{3}B \frac{\mathit{HYP2F1}( 1, \frac{3}{2} ; \frac{7}{4} , -B^2) }{\mathit{HYP2F1}(\frac{1}{2},1, \frac{3}{4}, -B^2)}  = \tfrac{2}{c_0} = U_0 \left( \tfrac{1}{2} \right) \approx 0.84 .
\end{equation*} 
Then the quantity in \eqref{eq:contrapsi} coincides with $U_0(\tfrac{1}{2})$, which is also the highest value a symmetric critical graph curve of \eqref{eq:willmore} can attain, see Remark \ref{rem:funkschar}. This can be seen as a sharpness result for Corollary \ref{cor:4.16} in the following sense: If $\psi < U_0$ then the flow subconverges for symmetric initial data with small energy. On the contrary, \eqref{eq:contrapsi} implies that we can find examples of obstacles that exceed $U_0$ 'only by a little' with the property that the flow starting at symmetric initial data of small energy cannot subconverge.
\end{remark}

\appendix 

\section{Technical Proofs} \label{sec:App}

\begin{proof}[Proof of Proposition \ref{prop:standard}]
Let $(u_n)_{n \in \mathbb{N}} \subset C$ be a minimizing sequence for $\Phi_{v, \tau}$. Note that then 
\begin{equation*}
||u_n- v|| = \sqrt{ 2\tau(\Phi_{v,\tau}(u_n) - \E(u_n)) } \leq \sqrt{ 2\tau(\Phi_{v,\tau}(u_n) - \alpha) }
\end{equation*}
which implies that $(u_n)_{n \in \mathbb{N}}$ is bounded. Therefore $(u_n)_{n \in \mathbb{N}}$ possesses a weakly convergent subsequence which we call $(u_n)_{n \in \mathbb{N}}$ again for the sake of simplicity. Let $w \in H$ denote its weak limit. Since $C$ is weakly closed we infer that $w \in C$. Since $\Phi_{v,\tau}$ is weakly lower semicontinuous as the sum of two weakly lower semicontinous functionals (see Assumption \ref{ass:main} and \cite[Proposition 3.5(iii)]{Brezis}), we obtain 
\begin{equation*}
\Phi_{v,\tau}(w) \leq \liminf_{n \rightarrow \infty } \Phi_{v, \tau} (u_n)  = \inf_{u \in C} \Phi_{v, \tau} (u) . 
\end{equation*}
Equation \eqref{eq:eulerlag} follows after an easy computation from the inequality  
\begin{equation*}
0 \leq \frac{d}{d\epsilon}_{\mid_{\epsilon=0}} \Phi_{v,\tau} (\epsilon u + (1- \epsilon)w) \quad \forall u \in C. \qedhere
\end{equation*} 
\end{proof}

\begin{proof}[Proof of Proposition \ref{prop:3.4}]
The Frechét differentiability and the formula for the Frechét derivative follows after a straightforward computation from
\begin{equation*}
\E(u) = \int_0^1 G'(u')^2 u''^2 \; \mathrm{d}x + \int_0^1 K(u') \; \mathrm{d}x \quad \forall u \in H. 
\end{equation*}
We continue verifying Assumption \ref{ass:main} and \ref{ass:growth}. 
The energy $\E$ is clearly bounded from below by $0$. 
To show weak lower semicontinuity, suppose that $u_n \rightharpoonup u $ in $H$. Then $u_n' \rightarrow u'$ uniformly on $[0,1]$. This implies that there is $R> 0 $ such that $||u'||_\infty, ||u_n'||_\infty \leq R $ for each $n \in \mathbb{N}$. Hence 
\begin{align*}
|K(u_n'(x)) - K(u'(x))| & \leq ||u_n' - u||_\infty \int_0^1 |K'(s u_n'(x) + (1-s) u'(x))| \; \mathrm{d}s \\ & \leq ||u_n' - u'||_\infty \sup_{z \in B_R(0)} |K'(z)|  \rightarrow 0  \quad (n  \rightarrow \infty).
\end{align*}
 Thus, since the convergence is uniform
\begin{equation*}
\int_0^1 K(u_n') \; \mathrm{d}x \rightarrow \int_0^1 K(u') \; \mathrm{d}x .
\end{equation*}
Additionally,
\begin{align*}
 \int_0^1 &G'(u') u''^2 \; \mathrm{d}x  = \lim_{n \rightarrow \infty} \int_0^1 G'(u')^2 u'' u_n'' \; \mathrm{d}x \\ & \leq \liminf_{n \rightarrow \infty} \int_0^1 G'(u')G'(u_n') u'' u_n'' \; \mathrm{d}x + \int_0^1 |G'(u')| \;  |G'(u') - G'(u_n')|  \; |u''| \; | u_n''|  \; \mathrm{d}x  
 \\ & \leq  \liminf_{n \rightarrow \infty} \sqrt{\int_0^1 G'(u')^2 u''^2 \; \mathrm{d}x } \sqrt{\int_0^1 G'(u_n')^2 u_n''^2 \; \mathrm{d}x } \\ & \quad \quad  +  ||G'(u') - G'(u_n')||_\infty ||u''||_{L^2} ||u_n''||_{L^2} \sup_{z \in B_R(0) } |G'(z)| \\ 
 & \leq\liminf_{n \rightarrow \infty}  \sqrt{\E(u) } \sqrt{\E(u_n)} \\ & \quad \quad  +   ||u' - u_n'||_\infty ||u''||_{L^2} ||u_n''||_{L^2} \sup_{s,x \in [0,1]} |G''(su'(x) + (1-s) u_n'(x))| \sup_{z \in B_R(0)} |G'(z)|
 \\ & \leq\liminf_{n \rightarrow \infty}  \sqrt{\E(u) } \sqrt{\E(u_n)} \\ & \quad \quad  +  ||u' - u_n'||_\infty ||u''||_{L^2} ||u_n''||_{L^2} \sup_{z \in B_R(0)} |G''(z)| \sup_{z \in B_R(0)} |G'(z)|
 \\ & = \sqrt{\E(u)} \sqrt{\liminf_{n \rightarrow \infty} \E(u_n)}.
\end{align*}
The lower semicontinuity follows. We proceed verifying Assumption \ref{ass:growth} and the rest follows immediately. Let $u,v \in H$. Denote by $C_P$ the operator norm of the embedding operator $\iota :  W^{2,2}(0,1)\cap W_0^{1,2} (0,1)  \hookrightarrow W^{1, \infty} (0,1)\cap W_0^{1,2}(0,1)$. Also define 
$B:= B_{||u'||_\infty + ||v'||_\infty}(0)$.
\begin{align*}
||\nabla \E(u) - \nabla \E(v) || &  = \sup_{ ||\phi|| \leq 1 }  \int_0^1 2 (u'' G'(u') ^2 - v'' G'(v')^2) \phi''  + 2 \int_0^1 (K'(u') - K'(v')) \phi'   \\  &\qquad \qquad \quad  +  2 \int_0^1  ( G'(u') G''(u') u''^2 - G'(v') G''(v') v''^2 ) \phi' 
\\  \leq  \sup_{ ||\phi|| \leq 1 }  & 2 ||\phi|| \; ||u''G'(u')^2 -v''G'(v')^2||_{L^2}\\ &\quad   + 2 \sup_{ z \in B } |K''(z)| \;   ||\phi'||_{L^\infty} ||u' - v'||_{L^\infty} \\ &  \quad + ||\phi'||_{L^\infty} ||u''||_{L^2}^2 || G'(u') G''(u') - G'(v') G''(v')||_{L^\infty} \\ & \quad + ||\phi'||_{L^\infty} \int_0^1  |G'(v') G''(v')| |u''^2 - v''^2| \; \mathrm{d}x.
\end{align*}
Note that for a.e $x \in (0,1)$
\begin{align*}
& |G'(u'(x)) G''(u'(x)) - G'(v'(x)) G''(v'(x))| \\ & \qquad  \leq \left\vert  \int_0^1 (G''^2 + G' G''') (s u'(x) + (1-s) v'(x)) \; \mathrm{d}s \right\vert ||u' -v'||_\infty  \\ & \qquad \leq \sup_{z \in B } |G''(z)^2 + G'(z) G'''(z) | \; ||u' - v'||_\infty   
\end{align*}
and 
\begin{equation*}
| u''^2 - v''^2 | = | (u''- v'')^2  +2 v'' (u'' - v'') | \leq |u'' - v''|^2 + 2 |v''| |u'' - v''| .
\end{equation*}
Therefore 
\begin{align*}
||\nabla \E(u) - \nabla& \E(v) ||  \leq  \sup_{ ||\phi|| \leq 1 }   2 ||\phi|| ( ||(u'' -v'')G'(u')^2||_{L^2} + ||u''||_{L^2} || G'(u')^2 - G'(v')^2||_{\infty} )\\  &\quad \qquad \qquad 
+ 2  ||\phi'||_{L^\infty} ||u' - v'||_{L^\infty} \sup_{ z \in B } |K''(z)| 
 \\ &  \qquad \qquad \quad + ||\phi'||_{L^\infty} ||u''||_{L^2}^2 ||u' - v'||_\infty\sup_{z \in B } |G''(z)^2 + G'(z) G'''(z) |   \\ & \quad\qquad \qquad  +  ||\phi'||_{L^\infty} \sup_{z \in B}|G'(z)G''(z) | \int_0^1   |u''^2 - v''^2| \; \mathrm{d}x \\
\leq  \sup_{ ||\phi|| \leq 1 }  & 2 ||\phi|| ( \sup_{z \in B} |G'(z)| ^2\;  ||(u'' -v'')||_{L^2} + ||u''||_{L^2} \sup_{z \in B} |2(G'G'')(z)|\; ||u' - v'||_\infty )\\  &\quad 
+ 2 \sup_{ z \in B } |K''(z)| \;  ||\phi'||_{L^\infty} ||u' - v'||_{L^\infty}
 \\ &  \quad + ||\phi'||_{L^\infty} ||u''||_{L^2}^2 \sup_{z \in B} |G''(z)^2 + G'(z) G'''(z) | \; ||u' - v'||_\infty  \\ & \quad +  ||\phi'||_{L^\infty} \sup_{z \in B}|G'(z)G''(z) | (||u'' - v''||_{L^2}^2 + 2 ||v''||_{L^2} \; ||u'' -v''||_{L^2}) \\
 \leq  \sup_{ ||\phi|| \leq 1 }  &  2 ||\phi|| ( \sup_{z \in B} |G'(z)|^2 \; ||(u'' -v'')||_{L^2} + ||u''||_{L^2} \sup_{z \in B} |2(G'G'')(z)| \; ||u' - v'||_\infty )\\  &\quad 
+ 2 \sup_{ z \in B} |K''(z)| \;  ||\phi'||_{L^\infty} ||u' - v'||_{L^\infty}
 \\ &  \quad + ||\phi'||_{L^\infty} ||u''||_{L^2}^2 \sup_{z \in B} |G''(z)^2 + G'(z) G'''(z) | \;  ||u' - v'||_\infty  \\ & \quad +  ||\phi'||_{L^\infty} \sup_{z \in B}|(G'G'')(z) | (||u''||_{L^2} + 3||v''||_{L^2}) ||u-v||.  
\end{align*}
Estimating all $L^\infty$ norms with $C_P ||\cdot||$ one obtains the claim with 
\begin{align*}
\zeta(r) :=&  2 \sup_{ B_{C_pr}(0)} |G'|^2 + 2r \sup_{B_{C_pr}(0)}|G'G''|  + 2C_p^2 \sup_{B_{C_pr}(0)} |K''| \\ & + C_p^2 r^2 \sup_{B_{C_pr}(0)} |G''^2 + G'G'''| + 3C_pr \sup_{B_{C_pr}(0)} |G'G''| .
\end{align*}
\end{proof}

\begin{proof}[Proof of Lemma \ref{lem:ind:rec}]
We prove the claim by induction over $l$. The case $l = 0 $ is clear. Suppose now the claim is true for each $l \in \{0,1,...,k\}$. We prove that it is also true for $l = k+1$. 
\begin{align*}
a_{k+1} & \leq (1+ Z_1 \tau) a_k + Z_2 \tau  \\ 
& \leq (1 + Z_1 \tau) \left( (1+ Z_1 \tau)^k a_0 + \frac{Z_2}{Z_1} ((1+Z_1 \tau)^k - 1)  \right) + Z_2 \tau 
\\ & = (1+ Z_1 \tau)^{k+1} a_0 + \frac{Z_2}{Z_1} \left((1 + Z_1 \tau)^{k+1} - (1+ Z_1 \tau)  \right) + Z_2 \tau 
\\ & =   (1+ Z_1 \tau)^{k+1} a_0 + \frac{Z_2}{Z_1} ( ( 1+ Z_1 \tau)^{k+1} - 1 ). \qedhere 
\end{align*} 
\end{proof}

\begin{proof}[Proof of Lemma \ref{lem:ODE}]
The product rule for Sobolev functions, see \cite[Theorem 4 in Section 4.2.2]{EvansGariepy} implies that 
\begin{equation}\label{eq:dingenskirchen}
h(t) := \exp \left( -\int_a^t \alpha(s) \; \mathrm{d}s \right) f(t) 
\end{equation}
is weakly differentiable with weak derivative 
\begin{equation*}
h'(t) = \exp \left( -\int_a^t \alpha(s) \; \mathrm{d}s \right) \beta(t) . 
\end{equation*}
Therefore $h \in W^{1,1}(a,b)$ and hence 
\begin{align*}
h(t) & = h(a) + \int_a^t  \exp \left( -\int_a^s \alpha(w) \; \mathrm{d}w \right) \beta(s) \; \mathrm{d}s \\&   = f(a) + \int_a^t  \exp \left( -\int_a^s \alpha(w) \; \mathrm{d}w \right) \beta(s) \; \mathrm{d}s. 
\end{align*}
 The claim follows together with \eqref{eq:dingenskirchen}.
\end{proof}
\bibliographystyle{amsplain}

\end{document}